\DeclareSymbolFont{cyrletters}{OT2}{wncyr}{m}{n}
\DeclareMathSymbol{\Sha}{\mathalpha}{cyrletters}{"58}
\newcommand{\defi}[1]{\textsf{#1}} 
\newcommand{\Aff}{\mathbb{A}}
\newcommand{\C}{\mathbb{C}}
\newcommand{\F}{\mathbb{F}}
\newcommand{\PP}{\mathbb{P}}
\newcommand{\Q}{\mathbb{Q}}
\newcommand{\R}{\mathbb{R}}
\newcommand{\Z}{\mathbb{Z}}
\newcommand{\Qbar}{{\overline{\Q}}}
\newcommand{\kbar}{{\overline{k}}}
\newcommand{\Fbar}{{\overline{\F}}}
\newcommand{\boldf}{\mathbf{f}}
\newcommand{\boldl}{\ensuremath{\boldsymbol\ell}}
\newcommand{\boldL}{\mathbf{L}}
\newcommand{\boldw}{\mathbf{w}}
\newcommand{\boldzero}{\mathbf{0}}
\newcommand{\calC}{\mathcal{C}}
\newcommand{\calD}{\mathcal{D}}
\newcommand{\calE}{\mathcal{E}}
\newcommand{\calF}{\mathcal{F}}
\newcommand{\calH}{\mathcal{H}}
\newcommand{\calJ}{\mathcal{J}}
\newcommand{\calM}{\mathcal{M}}
\newcommand{\calO}{\mathcal{O}}
\newcommand{\calS}{\mathcal{S}}
\newcommand{\calT}{\mathcal{T}}
\newcommand{\calU}{\mathcal{U}}
\newcommand{\CC}{\mathscr{C}}
\newcommand{\JJ}{\mathscr{J}}
\newcommand{\OO}{\mathscr{O}}
\newcommand{\WW}{\mathscr{W}}
\DeclareMathOperator{\Div}{Div}
\DeclareMathOperator{\Eq}{Eq}
\DeclareMathOperator{\Fix}{\tt Fix}
\DeclareMathOperator{\Frac}{Frac}
\DeclareMathOperator{\Gal}{Gal}
\DeclareMathOperator{\im}{im}
\DeclareMathOperator{\Log}{Log}
\DeclareMathOperator{\MakeDecentModel}{\tt MakeDecentModel}
\DeclareMathOperator{\NP}{NP}
\DeclareMathOperator{\Prob}{\bf P}
\DeclareMathOperator{\Proj}{Proj}
\DeclareMathOperator{\res}{res}
\DeclareMathOperator{\rk}{rk}
\DeclareMathOperator{\Sel}{Sel}
\DeclareMathOperator{\Sp}{Sp}
\DeclareMathOperator{\Spec}{Spec}
\newcommand{\good}{{\operatorname{good}}}
\newcommand{\smooth}{{\operatorname{smooth}}}
\newcommand{\tors}{{\operatorname{tors}}}
\newcommand{\E}{{\operatorname{\bf E}}}
\newcommand{\GL}{\operatorname{GL}}
\newcommand{\HH}{{\operatorname{H}}}
\newcommand{\injects}{\hookrightarrow}
\newcommand{\intersect}{\cap} 
\newcommand{\isom}{\simeq}
\newcommand{\surjects}{\twoheadrightarrow}
\newcommand{\tensor}{\otimes} 
\newcommand{\Union}{\bigcup} 
\newcommand{\Algorithm}{\textbf{Algorithm}\ }
\newcommand{\Subroutine}{\textbf{Subroutine}\ }
\newcommand{\rholog}{\rho \log}
\newcommand{\To}{\longrightarrow}
\numberwithin{equation}{section}
\newtheorem{theorem}[equation]{Theorem}
\newtheorem{lemma}[equation]{Lemma}
\newtheorem{corollary}[equation]{Corollary}
\newtheorem{proposition}[equation]{Proposition}
\theoremstyle{definition}
\newtheorem{definition}[equation]{Definition}
\newtheorem*{conjectureEq}{Conjecture~$\Eq_g(p)$}
\theoremstyle{remark}
\newtheorem{remark}[equation]{Remark}
\definecolor{darkgreen}{rgb}{0,0.5,0}
\begin{document}

\title{Most odd degree hyperelliptic curves \\ have only one rational point}
\subjclass[2010]{Primary 11G30; Secondary 14G25, 14G40, 14K15, 14K20}
\keywords{hyperelliptic curve, rational point, Chabauty's method, Selmer group}
\thanks{This article is to appear in \emph{Annals of Mathematics}.}

\author{Bjorn Poonen}
\address{Department of Mathematics, Massachusetts Institute of Technology, Cambridge, MA 02139-4307, USA}
\email{poonen@math.mit.edu}
\urladdr{http://math.mit.edu/~poonen/}

\author{Michael Stoll}
\address{Mathematisches Institut,
         Universit\"at Bayreuth,
         95440 Bayreuth, Germany.}
\email{Michael.Stoll@uni-bayreuth.de}
\urladdr{http://www.mathe2.uni-bayreuth.de/stoll/}

\dedicatory{In memory of Robert F. Coleman, who pioneered the effective approach to Chabauty's method}

\date{June 15, 2014}

\begin{abstract}
Consider the smooth projective models $C$ of curves $y^2=f(x)$
with $f(x) \in \Z[x]$ monic and separable of degree~$2g+1$.
We prove that for $g \ge 3$, a positive fraction of these have
only one rational point, the point at infinity.
We prove a lower bound on this fraction that
tends to $1$ as $g \to \infty$.
Finally, we show that $C(\Q)$ can be algorithmically computed 
for such a fraction of the curves.
The method can be summarized as follows: using $p$-adic analysis
and an idea of McCallum, 
we develop a reformulation of Chabauty's method
that shows that certain computable conditions imply $\#C(\Q)=1$;
on the other hand, using further $p$-adic analysis,
the theory of arithmetic surfaces, a new result 
on torsion points on hyperelliptic curves,
and crucially the Bhargava--Gross theorems on the average number and
equidistribution of nonzero $2$-Selmer group elements,
we prove that these conditions are often satisfied for $p=2$.
\end{abstract}

\maketitle


\section{Introduction}

In 1983, Faltings proved that if $C$ is a curve of genus $g>1$ over $\Q$,
then $C(\Q)$ is finite \cite{Faltings1983}.
Our goal is to study $C(\Q)$ as $C$ varies in a family,
namely the family $\calF_g$ of hyperelliptic curves 
$y^2=f(x)$ for $f(x) \in \Z[x]$ monic and separable of degree $2g+1$
for a fixed $g>1$.
Although we write an affine equation,
we mean the smooth projective model,
which has one point $\infty$ at infinity since $\deg f$ is odd.

Our main results are:
\begin{itemize}
\item For each $g \ge 3$, 
a positive fraction of the $C \in \calF_g$ satisfy $C(\Q)=\{\infty\}$
(Theorem~\ref{T:positive density}).
\item The fraction tends to~$1$ exponentially fast as $g \to \infty$
(Theorem~\ref{T:main}).
\item Chabauty's method \cites{Chabauty1941,Coleman1985chabauty}
at the prime $2$ is enough to yield
an effective algorithm to determine $C(\Q)$ for $C$ in a computable subset
whose density is positive for $g \ge 3$ 
and tends to~$1$ as $g \to \infty$ 
(Corollary~\ref{C:effective}).
\end{itemize}
(See Section~\ref{S:notation} for the precise definition of density.)
In particular, most monic integral polynomials of large odd degree 
never yield a square when evaluated on rational numbers.
This is the first time that Faltings' theorem has been made effective
for a positive fraction of a ``large'' family 
of curves with a rational point.
Note that the presence of a rational point makes it impossible to use
local methods to prove that $C(\Q)=\emptyset$,
and generally tends to make the determination of $C(\Q)$ more difficult.
On the other hand, the fraction is conjectured to be $1$ for all $g \ge 2$
(Remark~\ref{R:100 percent}).

Our proofs depend crucially on work of Bhargava and Gross 
on the average behavior of $2$-Selmer groups of hyperelliptic 
Jacobians~\cite{Bhargava-Gross-preprint}*{Theorems 11.1 and~12.4}.
Their work was inspired by the connection between pencils of quadrics
and hyperelliptic Jacobians
(work of Reid~\cite{Reid-thesis}, Donagi~\cite{Donagi1980}, 
and Wang~\cite{WangXJ-thesis}),
and by earlier work by Bhargava and Shankar 
for elliptic curves~\cite{Bhargava-Shankar-preprint1},
itself preceded by work of de Jong~\cite{DeJong2002} 
in the function field case
(see also \cite{Fouvry1993} and other references listed 
in \cite{Poonen2013-bourbaki}*{Section~2}).
Bhargava and Gross deduced corollaries for $C(\Q)$
from \cite{Bhargava-Gross-preprint}*{Theorem~11.1}:
specifically, they proved that 
for each $g \ge 2$,
there is a positive fraction of $C \in \calF_g$ 
satisfying $\# C(\Q) \le 3$, and 
for each $g \ge 3$,
the fraction of $C$ satisfying $\#C(\Q) < 20$ 
is more than $1/2$ \cite{Bhargava-Gross-preprint}*{Corollary~4}.
On the other hand, our arguments are essentially disjoint 
from those in \cite{Bhargava-Gross-preprint}:
we use \cite{Bhargava-Gross-preprint}*{Theorems 11.1 and~12.4}
only as a black box.

To explain how Bhargava and Gross passed from Selmer group information
to information on $C(\Q)$, 
and to explain why different 
ideas are needed to obtain \emph{our} results,
we must recall Chabauty's method
(see \cite{McCallum-Poonen2012} or~\cite{Stoll2006-chabauty} for a more detailed exposition).
Let $C$ be a curve over $\Q$ embedded in its Jacobian $J$.
In 1941, Chabauty~\cite{Chabauty1941},
inspired by an idea of Skolem~\cite{Skolem1934}, 
proved a weak form of what is now Faltings' theorem,
namely that if $\rk J(\Q) < g$, then $C(\Q)$ is finite.
Chabauty's approach was to bound $C(\Q)$
by $C(\Q_p) \intersect \overline{J(\Q)}$ inside $J(\Q_p)$,
where $\overline{J(\Q)}$ is the $p$-adic closure of $J(\Q)$ in $J(\Q_p)$.
Later, Coleman~\cite{Coleman1985chabauty} 
showed how to refine Chabauty's argument 
to obtain an \emph{explicit} upper bound on $\#C(\Q)$,
and improved bounds were given in~\cite{Stoll2006-chabauty}.
The latter bounds at odd primes of good reduction
were sufficient for Bhargava and Gross
to obtain their results for $\#C(\Q)$ above.

But it is impossible to reduce the Chabauty upper bound on $\#C(\Q)$ to $1$
if one knows only the size of the $2$-Selmer group
(when it does not force $J(\Q)$ to be finite),
because there is nothing to control 
the position of $\overline{J(\Q)}$ in $J(\Q_p)$.
Restricting the family of curves to a subfamily defined by
finitely many congruence conditions does not help: 
such conditions can determine the structure of $J(\Q_p)$,
but not the position of $\overline{J(\Q)}$ in $J(\Q_p)$, it seems.

To solve this problem,
we resurrect an idea of McCallum \cite{McCallum1994},
that knowledge of the $p$-Selmer group $\Sel_p J$ 
and its map to $J(\Q_p)/pJ(\Q_p)$
can sometimes be used to extract a tiny bit of information 
on the position of $\overline{J(\Q)}$ inside $J(\Q_p)$.
McCallum used this idea to study the arithmetic
of Fermat curves in 1994, but as far as we know, it has not been used since.
Fortunately, the method of Bhargava and Gross
yields not only the average size of $\Sel_2 J$, but also 
\emph{equidistribution} 
of the images of its nonzero elements
in $J(\Q_2)/2J(\Q_2)$ as $C$ varies 
(see Section~\ref{S:equidistribution} for the meaning of this),
even if one imposes finitely many congruence conditions on $C$.
We will prove that this suffices for the application of McCallum's idea:
we impose congruence conditions to control the position
of $C(\Q_2)$ in $J(\Q_2)$ for $C$ with good reduction at $2$,
and apply equidistribution to prove that for $g$ large enough (at least $3$),
at least a small positive fraction 
of these $C$ have the $2$-adic closure $\overline{J(\Q)}$ 
in a favorable position, i.e., intersecting $C(\Q_2)$ in only one point,
so that a $2$-adic Chabauty argument succeeds in proving $C(\Q)=\{\infty\}$.

To carry out the argument in the previous sentence,
we introduce a reformulation of Chabauty's method
in which we compute $C(\Q_2) \intersect \overline{J(\Q)}$ 
not in $J(\Q_2)$ directly but only after applying a sequence of maps
(see~\eqref{E:big}).
Specifically, 
we prove that if 
\[ \Sel_2 J \to V \colonequals \dfrac{J(\Q_2)}{2 J(\Q_2) + J(\Q_2)_{\tors}} \isom \F_2^g \]
is injective and the images of certain partially-defined maps
\[
	\rho\log \colon C(\Q_2) 
	\to J(\Q_2)
	\stackrel{\log}\surjects \Z_2^g 
	\dashrightarrow \PP^{g-1}(\Q_2) 
	\stackrel{\rho}\to \PP^{g-1}(\F_2) 
\]
and
\[
	\Sel_2 J \to V \isom \F_2^g \dashrightarrow \PP^{g-1}(\F_2)
\]
do not meet, then $C(\Q_2) \intersect \overline{J(\Q)}$ 
consists of torsion points of odd order
(Proposition~\ref{P:rational points are odd}).
Next, we exclude nontrivial torsion points 
by proving that 
most hyperelliptic curves do not contain any $\Q_2$-rational torsion points
except for Weierstrass points (Corollary~\ref{C:odd torsion on curves}): 
this follows from a new purely geometric statement, 
that the generic hyperelliptic curve contains no torsion points
except Weierstrass points 
(Theorem~\ref{T:torsion on generic hyperelliptic curve}).
Because Bhargava and Gross tell us how many nonzero Selmer elements there are
and how they are distributed in $V$,
it remains to show that $\rholog(C(\Q_2))$ is not too large.
When $g \ge 3$, this can be arranged for a positive fraction of curves
by $2$-adic congruence conditions on $C$
since it turns out that $\rholog(C(\Q_2))$ is locally constant as $C$
varies $2$-adically, if we exclude curves with unexpected torsion points
(Proposition~\ref{P:locally constant image of rholog}).

\begin{remark}
For $g=2$, 
it seems consistent with known results
that the conditions above on $C(\Q_2)$ and $\Sel_2 J$ fail for 100\% of curves:
the set $\PP^{g-1}(\F_2)$ is just too small.
But a \emph{3-adic} version of our argument would work even for $g=2$, 
if we knew equidistribution of nonzero $3$-Selmer group elements
(Remark~\ref{R:3-Selmer}).
\end{remark}

Now let us sketch how we strengthen the result
to obtain a fraction that tends to $1$ as $g \to \infty$.
We must continue to use the prime $2$,
because the equidistribution is 
currently known only for the $2$-Selmer group,
but now we must also consider $C$ with bad reduction at~$2$,
since the density of curves with good reduction at~$2$
tends to a number strictly less than $1$ as $g \to \infty$.
There are earlier studies of Chabauty's method in the bad reduction case,
such as \cite{Lorenzini-Tucker2002}*{Section~1}, 
\cite{McCallum-Poonen2012}*{Appendix}, 
and \cite{Katz-Zureick-Brown-preprint},
but the bounds they produce are not sharp enough for our purposes.
In fact, we must deal with curves with arbitrarily bad reduction at~$2$,
and our task is to prove that $\rholog(C(\Q_2))$ is usually small.
Since $\log|_{C(\Q_2)}$ is computed by integrating $1$-forms on residue disks,
which correspond to the $\F_2$-points in the smooth locus
$\calC^{\smooth}$ of the minimal proper regular model, 
it suffices to prove that
\begin{enumerate}[\upshape (1)]
\item\label{I:average number of smooth F2 points} 
the average size of $\calC^{\smooth}(\F_2)$ is small, and
\item\label{I:image of sigma_C on a residue disk}
the image of $\rholog$ on each residue disk is small.
\end{enumerate}

As for~\eqref{I:average number of smooth F2 points},
the Ogg--Saito formula~\cite{Saito1988} together 
with \cite{Liu1994}*{Proposition~1} 
bounds the number $c$ of connected components of $\calC_{\F_2}^{\smooth}$
in terms of the Deligne discriminant of $\calC$, 
but this discriminant is hard to compute,
and it is not known whether $c$ can be bounded 
in terms of the usual discriminant of the polynomial $f(x)$: 
the best results in this direction we know are those 
in~\cite{Liu1996}*{Section~9}.
So instead we analyze the random variable $\#\calC^{\smooth}(\F_2)$ 
by explicitly blowing up non-regular $\F_2$-points
until we have an approximation to $\calC$;
this leads to a recursive analysis of a 
Bienaym\'e--Galton--Watson-like process (Lemma~\ref{L:E X_n}).
The result (Theorem~\ref{T:average number of smooth points}) 
is that the average of $\#\calC^{\smooth}(\F_2)$
is at most $3$.

As for~\eqref{I:image of sigma_C on a residue disk},
this amounts to bounding the image of an analytic curve
in $\PP^{g-1}(\Q_2)$ under the reduction map to $\PP^{g-1}(\F_2)$.
We do not know of results in the literature on this kind of 
nonarchimedean analysis problem, 
though it is reminiscent of tropical geometry.
To handle it, we apply the $p$-adic Weierstrass preparation theorem
to replace the power series defining the analytic curve
by polynomials,
and hence reduce to the case of an algebraic rational curve.
We reinterpret the map $\PP^1_{\Q_2} \to \PP^{g-1}_{\Q_2}$ defining this curve
as a rational map $\PP^1_{\Z_2} \dashrightarrow \PP^{g-1}_{\Z_2}$,
whose indeterminacy at $\F_2$-points we resolve,
the upshot being that our image can be bounded 
in terms of the complexity of a tree of rational curves over $\F_2$
(see Section~\ref{S:curves under reduction}).

Many of our arguments work also at primes $p$ other than $2$,
so we work in this more general context when possible.
On the other hand, the Bhargava--Gross equidistribution theorem
is known only for $2$-Selmer elements, so the final results for higher $p$
must remain conditional for the time being.

\begin{remark}
Independently of the present paper,
Bhargava \cite{Bhargava-most-preprint} has proved that in the family of all
not-necessarily-monic \emph{even}-degree genus $g$ hyperelliptic curves
over $\Q$,
the density of those that have no rational points
is $1 - o(2^{-g})$ as $g \to \infty$.
Although the statement is similar to that of our Theorem~\ref{T:main},
and relies on average behavior of $2$-Selmer elements,
his proof is otherwise completely different:
it does not need an equidistribution theorem or Chabauty's method,
because there are more methods available for proving the nonexistence
of rational points than for determining the rational points when one exists.
Specifically, his proof controls the average size of the 
``fake $2$-Selmer set'' of the curve,
in effect showing that for most curves, 
all the relevant finite \'etale covers fail to have local points.
\end{remark}

\begin{remark}
For the family of \emph{monic} even-degree 
genus $g$ hyperelliptic curves over $\Q$, 
Shankar and Wang \cite{Shankar-Wang-preprint} 
have adapted the method of \cite{Bhargava-Gross-preprint}
to prove analogous theorems on the average size and equidistribution
of $2$-Selmer groups,
and then have adapted the method of the present paper
to prove that the density of such curves 
that have only the two rational points at infinity
is at least $1 - (48g+120) 2^{-g}$.
Just as the presence of one rational point makes it more difficult
to control the set of all rational points,
the presence of two rational points 
means that their argument must be more complicated than ours
in certain places.
\end{remark}


\section{Notation}
\label{S:notation}

For any field $k$, let $\kbar$ be an algebraic closure.

We fix a prime $p$. 
As usual, we define $\Z_p \colonequals \varprojlim \Z/p^n\Z$ and
its fraction field $\Q_p \colonequals \Frac \Z_p$.
More generally for any place $v$ of $\Q$, 
let $\Q_v$ be the completion of $\Q$ at $v$.
Let $\C_p$ be the completion of $\Qbar_p$,
let $\calO_{\C_p}$ be its valuation ring,
and let $D_1$ be the open unit disk in $\C_p$.
Let $v_p$ be the $p$-adic valuation on $\C_p$, normalized so that $v_p(p)=1$.

We fix $g \in \Z_{\ge 1}$.
For a field $k$,
let $\PP$ be the usual map $k^g \setminus \{\boldzero\} \to \PP^{g-1}(k)$.
We write $\rho$ for the reduction map 
$\PP^{g-1}(\Q_p) = \PP^{g-1}(\Z_p) \to \PP^{g-1}(\F_p)$
or
for the composition
$\Q_p^g \setminus \{\boldzero\} \stackrel{\PP}\to \PP^{g-1}(\Q_p) \stackrel{\rho}\to \PP^{g-1}(\F_p)$.
If $T$ is a subset of a set $S$,
and $f$ is a function defined only on $T$, 
then $f(S)$ means $f(T)$;
for example, we may write $\rho(\Q_p^g)=\PP^{g-1}(\F_p)$.

A \defi{variety} is a separated scheme $X$ of finite type over a field,
and $X$ is called \defi{nice} if it is smooth, projective, and geometrically
integral.
If $X$ and $T$ are $S$-schemes, define $X_T \colonequals X \times_S T$;
in this context we sometimes write $R$ instead of $\Spec R$.
Given a ring $R$, and $f \in R[x]$ of degree $2g+1$,
by the \defi{standard compactification} of $y^2=f(x)$ we mean the $R$-scheme
$\Proj R[x,y,z]/(y^2 - z^{2g+2} f(x/z))$ 
where $\deg x = \deg z =1$ and $\deg y = g+1$;
it can be covered by two affine patches, 
one isomorphic to $y^2=f(x)$ and the other to $y^2 = x^{2g+2} f(1/x)$.
For any domain $R$ of characteristic not~$2$,
denote by $\calF_g(R)$ the set of all nice genus~$g$ curves (over $\Frac R$) 
arising as the standard compactification of 
\[
   y^2 = x^{2g+1} + a_{1} x^{2g} + a_{2} x^{2g-1} + \cdots + a_{2g} x + a_{2g+1} 
\]
for some $a_1, a_2, \ldots, a_{2g+1} \in R$. 
This can be identified with $R^{2g+1}$
minus the zero set of the discriminant of the polynomial in~$x$ in the equation
above. We set $\calF_g \colonequals \calF_g(\Z)$.

Essentially following \cite{Bhargava-Gross-preprint},
for $C \in \calF_g$ corresponding to $(a_1, \ldots, a_{2g+1})$, 
define the \defi{height} of~$C$ as
\[
	H(C) \colonequals \max\bigl\{|a_1|, |a_2|^{1/2}, \ldots, 
			|a_{2g}|^{1/2g}, |a_{2g+1}|^{1/(2g+1)}\bigr\} . 
\]
(Actually, \cite{Bhargava-Gross-preprint} required $a_1=0$,
but this makes little difference: see Remark~\ref{R:a_1=0}.
Also, their height is the $2g(2g+1)$-th power of what we have written.)
We set 
\[ \calF_{g,X} \colonequals \{C \in \calF_g : H(C) < X\}. \]
The \defi{density} of a subset $S \subseteq \calF_g$ 
is 
\[ \mu(S) \colonequals \lim_{X \to \infty} \#(S \cap \calF_{g,X})/\#\calF_{g,X}, \]
if the limit exists.
Define \defi{lower density} and \defi{upper density}
by replacing $\lim$ by $\liminf$ or $\limsup$, respectively.
If $S \subseteq T \subseteq \calF_g$ and $\mu(T)>0$,
the \defi{relative density} of $S$ in $T$ is $\mu(S)/\mu(T)$, 
if $\mu(S)$ exists;
similarly define \defi{relative lower density} 
and \defi{relative upper density}. 
If $f \colon \calF_g \to \R$ is a function, 
then we say that $f$ has \defi{average}~$\alpha$
on~$\calF_g$ if
\[ 
	\lim_{X \to \infty} \frac{\sum_{C \in \calF_{g,X}} f(C)}{\#\calF_{g,X}} 
	= \alpha . 
\]
We say that the average of~$f$ is at most~$\alpha$
if the $\limsup$ is at most $\alpha$.
Similarly define the average of a function on an infinite subset of $\calF_g$.

Restrict the normalized Haar measure on $\Z_p^{2g+1}$
to obtain a probability measure on $\calF_g(\Z_p)$.
Let $\Prob(S)$ be the probability of an event $S$.
For a random variable $X$ defined on $\calF_g(\Z_p)$,
let $\E X$ denote its average.
If $X$ is a random variable defined only on a positive-measure
subset $S$ of $\calF_g(\Z_p)$,
then $\E X$ denotes the average of $X$ conditioned on the event $S$.


\section{Images of curves under reduction}
\label{S:curves under reduction}

\subsection{Algebraic curves}

By the \defi{degree} of a morphism $\phi \colon C \to \PP^{g-1}$,
where $C$ is a nice curve,
we mean $\deg \phi^* \OO(1)$.
If $f_1,\ldots,f_g$ are single-variable polynomials of degree at most $n$,
not all zero,
then the rational map $(f_1:\cdots:f_g) \colon \Aff^1 \dashrightarrow \PP^{g-1}$
extends to a morphism $\PP^1 \to \PP^{g-1}$ of degree at most $n$.

\begin{proposition} \label{P:s&r-image}
Let $\phi \colon \PP^1_{\Q_p} \to \PP^{g-1}_{\Q_p}$ be of degree~$n$.
Then $\#\rho\bigl(\phi(\PP^1(\Q_p))\bigr) \le np+1$.
\end{proposition}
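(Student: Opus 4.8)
The plan is to reduce the statement to a purely model-theoretic count on an arithmetic surface. First I would choose coordinates so that $\phi$ is given by $(f_1 : \cdots : f_g)$ with $f_i \in \Z_p[x]$ of degree at most $n$, not all zero modulo $p$; this is possible after scaling, since we may clear denominators and then divide by the largest power of $p$ dividing all coefficients. These $f_i$ define a rational map $\widetilde\phi \colon \PP^1_{\Z_p} \dashrightarrow \PP^{g-1}_{\Z_p}$. If $\widetilde\phi$ were a morphism, then it would extend to $\PP^1(\Z_p) \to \PP^{g-1}(\Z_p)$ compatibly with reduction, and we could bound $\#\rho(\phi(\PP^1(\Q_p)))$ by $\#\bigl(\overline{\widetilde\phi}(\PP^1(\F_p))\bigr) \le \#\widetilde\phi_{\F_p}(\PP^1)(\F_p)$, which is at most $\deg(\widetilde\phi_{\F_p}) \cdot p + 1 \le np+1$ since the image is a (possibly degenerate) rational curve of degree at most $n$ over $\F_p$, and such a curve has at most $np+1$ points (parametrize by $\PP^1(\F_p)$, which has $p+1$ points, each with at most... — more carefully, the image curve, being the image of $\PP^1_{\F_p}$, is covered by $\PP^1(\F_p)$ via a map of degree at most $n$ onto its image, so has at most $p+1$ points, but we want the cruder bound: the degree-$n$ image plane/projective curve meets appropriately — I would instead argue the image of $\PP^1(\F_p)$ under a morphism to $\PP^{g-1}_{\F_p}$ has at most $p+1$ points, which already beats $np+1$).

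The actual obstacle is that $\widetilde\phi$ is generally \emph{not} a morphism: it has indeterminacy at finitely many closed points of $\PP^1_{\Z_p}$, all lying in the special fiber $\PP^1_{\F_p}$ (since over $\Q_p$ the map is everywhere defined). So the second, and main, step is to resolve these indeterminacies by a sequence of blow-ups $\pi \colon \XX \to \PP^1_{\Z_p}$ at closed points of the special fiber, obtaining a regular arithmetic surface $\XX$ on which $\widetilde\phi \circ \pi$ extends to a morphism $\Phi \colon \XX \to \PP^{g-1}_{\Z_p}$. Since $\PP^{g-1}_{\Z_p}$ is proper and $\XX$ is regular of dimension $2$, indeterminacy loci of rational maps to projective space have codimension at least $2$, hence are finite sets of closed points, and blowing those up and iterating terminates (this is the standard elimination-of-indeterminacy argument for surfaces). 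Crucially, because $\PP^1_{\Z_p} \to \Spec \Z_p$ already has all $\Q_p$-sections extending uniquely, the blow-up $\XX \to \PP^1_{\Z_p}$ induces a bijection on $\Q_p$-points (each section misses the finitely many centers, which lie in the special fiber, so lifts uniquely), while on special fibers $\XX_{\F_p}$ is $\PP^1_{\F_p}$ with a tree of $\PP^1$'s attached.

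For the final count I would use that every point of $\PP^1(\Q_p)$ reduces into $\XX_{\F_p}$, and its image under $\Phi$ reduces to the image under $\Phi_{\F_p}$ of its reduction point; hence $\rho(\phi(\PP^1(\Q_p))) \subseteq \Phi_{\F_p}(\XX_{\F_p}(\F_p))$. Write $\XX_{\F_p} = E_0 \cup E_1 \cup \cdots \cup E_k$ where $E_0$ is the strict transform of $\PP^1_{\F_p}$ and $E_1,\ldots,E_k$ are the exceptional $\PP^1$'s. On each component $E_i \cong \PP^1_{\F_p}$ the morphism $\Phi_{\F_p}$ has some degree $d_i := \deg \Phi_{\F_p}|_{E_i}^*\OO(1) \ge 0$, and $\Phi_{\F_p}(E_i)$ is either a point (if $d_i = 0$) or a rational curve of degree $d_i$, with at most $p+1$ points coming from $E_i(\F_p)$ in either case — but I want the sharper additive bookkeeping. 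The key numerical input is that $\sum_{i} d_i = n$: the total transform of a general hyperplane section has class pulled back from $\PP^1_{\Z_p}$, so its restriction to the special fiber has degree $n$ distributed among the $E_i$ (here one uses that $\Phi^*\OO(1)$ restricted to the fiber class $[\XX_{\F_p}] = \sum n_i [E_i]$ — with $n_0 = 1$ for the strict transform of $\PP^1$ and appropriate multiplicities $n_i \ge 1$ for exceptionals — gives $\Phi^*\OO(1) \cdot [\XX_{\F_p}] = \pi^*(\OO_{\PP^1}(n)) \cdot [\text{fiber}] = n$, so $\sum_i n_i d_i = n$, whence $\sum_i d_i \le n$). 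Then each $\Phi_{\F_p}(E_i)$ has at most $d_i p + 1$ points (a rational curve of degree $d_i$ over $\F_p$ — say, its normalization is $\PP^1_{\F_p}$ mapping finitely of degree $\le d_i$, but more simply: a degree-$d_i$ curve in $\PP^{g-1}_{\F_p}$ meets each hyperplane in $\le d_i$ points and $\PP^{g-1}(\F_p)$ is covered by... ) — the clean estimate I will use is $\#\Phi_{\F_p}(E_i) \le d_i p + 1$ when $d_i \ge 1$ and $= 1$ when $d_i = 0$. Summing over a connected configuration (a tree, so consecutive components share points, and I can absorb the "$+1$"s along the tree): $\#\Phi_{\F_p}(\XX_{\F_p}) \le 1 + \sum_i d_i p \le 1 + np$, giving the claim. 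The delicate point requiring care is precisely this last combinatorial bookkeeping — verifying that the tree structure of $\XX_{\F_p}$ lets the constant terms telescope so that only one "$+1$" survives — and the degree-additivity $\sum n_i d_i = n$, which is where the regularity of $\XX$ and properness of the blow-up are used. I expect the indeterminacy-resolution step to be routine given the theory of arithmetic surfaces cited earlier; the genuine content is the component-by-component degree count on the special fiber.
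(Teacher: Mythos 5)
Your proof is essentially the same as the paper's: resolve the indeterminacy of the rational map $\PP^1_{\Z_p} \dashrightarrow \PP^{g-1}_{\Z_p}$ by blowing up, observe that $\Q_p$-points of $\PP^1$ lift uniquely so $\rho(\phi(\PP^1(\Q_p)))$ lies inside the image of the $\F_p$-points of the special fiber, and then bound that image using degree-additivity across the resulting tree of rational curves (the paper phrases this as ``non-constant on at most $n$ components,'' you as $\sum_i d_i \le n$; these are the same count). The paper's only refinement is to blow up in two stages --- first only at $\F_p$-points, producing the model $S'$ on which the $\F_p$-point count is done, and then further at higher-degree closed points to reach a genuine morphism $S'' \to \PP^{g-1}_{\Z_p}$ for the degree argument --- which cleanly sidesteps the point you should note explicitly: exceptional curves arising over closed points of degree $>1$ are $\PP^1$'s over $\F_{p^d}$ with $d>1$ and hence have no $\F_p$-points, so they neither contribute to the image nor break the connectedness of the tree of $\F_p$-points used in the telescoping count.
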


If $\phi$ has good reduction (i.e., extends to a morphism
$\PP^1_{\Z_p} \to \PP^{g-1}_{\Z_p}$),
then $\rho(\phi(\PP^1(\Q_p)))$ is contained in $\phi(\PP^1(\F_p))$,
which has size at most $p+1$.
In the general case, we will blow up $\PP^1_{\Z_p}$ 
to resolve the indeterminacy,
and control the resulting increase in the number of $\F_p$-points 
on the source.

\begin{proof}[Proof of Proposition~\ref{P:s&r-image}]
After iteratively blowing up $\F_p$-points on $\PP^1_{\Z_p}$,
we obtain a proper regular $\Z_p$-scheme $S'$ such that $\phi$
extends to a rational map $\phi' \colon S' \dashrightarrow \PP^{g-1}_{\Z_p}$ 
defined at all $\F_p$-points of $S'$.
If we also blow up closed points of higher degree, 
we obtain $S''$ such that $\phi$ extends to 
a \emph{morphism} $\phi'' \colon S'' \to \PP^{g-1}_{\Z_p}$, 
as in \cite{Lichtenbaum1968}*{II.D, Proposition~4.2}.
Then $\PP^1(\Q_p) = S''(\Z_p) = S'(\Z_p)$, 
so $\rho(\phi(\PP^1(\Q_p))) = \rho(\phi'(S'(\Z_p))) \subseteq \phi'(S'(\F_p))$.

The construction of $S'$ shows that $S'_{\F_p}$ is a 
strict normal crossings divisor whose components are copies
of $\PP^1_{\F_p}$ meeting at $\F_p$-points.
Name these components $S'_1,S'_2,\ldots$
in the order that they were produced by the blowing up, 
starting with $S'_1$ being the strict transform of $\PP^1_{\F_p}$.
Since $\deg {\phi''_{\F_p}}^* \OO(1) = \deg \phi^* \OO(1) = n$,
the morphism $\phi''$ is non-constant on at most $n$ components
of $S''_{\Fbar_p}$,
so $\phi'$ is non-constant on at most $n$ of the sets $S'_i(\F_p)$.
Let $\Sigma_0=\{P\}$ for some $P \in S'_1(\F_p)$,
and for $i \ge 1$, let $\Sigma_i \colonequals \Union_{j=1}^i S'_j(\F_p)$.
We have $\# \phi'(\Sigma_0) = 1$.
Incrementing $i$ increases $\# \phi'(\Sigma_i)$ by $0$
if $\phi'$ is constant on $S'_{i+1}(\F_p)$
and by at most $p$ otherwise,
since one of the $p+1$ points of $S'_{i+1}(\F_p)$ was already in $\Sigma_i$.
Thus $\# \phi'(\Sigma_i)$ increases at most $n$ times,
by at most $p$ each time, starting from $1$.
Hence $\#\phi'(S'(\F_p)) \le np+1$.
\end{proof}

\begin{remark}
For each $(n,p)$, the bound in Proposition~\ref{P:s&r-image} is sharp:
for any $g>n$, define $\phi$ by taking $f_j(t) \colonequals p^{j(j-1)} t^{j-1}$
for $1 \le j \le n+1$, and $f_j(t) \colonequals 0$ for $n+1 < j \le g$;
then $\rho(\phi(\PP^1(\Q_p)))$ is a chain of $n$~lines in~$\PP^{g-1}(\F_p)$.
\end{remark}

\begin{remark}
The proof of Proposition~\ref{P:s&r-image} suggests a down-to-earth
algorithm for computing $\rho(\phi(\PP^1(\Q_p)))$.
Namely, one iteratively subdivides $\PP^1(\Q_p)$ into disks 
until $\rho \circ \phi$ is constant on each.
\end{remark}

\subsection{Analytic curves}

Recall the notation $\Z_p$, $\Q_p$, $\C_p$, and $D_1$ 
from Section~\ref{S:notation}.

\begin{definition} \label{D:nr}
Let $\boldw = (w_1, w_2, \ldots, w_g) \in \C_p[\![t]\!]^g - \{\boldzero\}$.
Let $w_{j,n}$ be the coefficient of~$t^n$ in~$w_j$. 
Define the \defi{Newton polygon} $\NP(\boldw)$ as the
lower convex hull of the set
\[
	\left\{ \bigl(n, v_p(w_{j,n})\bigr) : 1 \le j \le g, \; n \ge 0 \right\}.
\]
Suppose that the minimum of the $y$-coordinates 
of the vertices of $\NP(\boldw)$ is attained;
then for the vertices attaining this minimum,
let $n_{\boldw}$ and $N_{\boldw}$ be the minimum and maximum
$x$-coordinates if they exist (the maximum might not exist).
When $\boldw$ consists of a single $w$, we also write $\NP(w)$, $n_w$, $N_w$.
\end{definition}

\begin{remark}
\label{R:span}
The Newton polygon $\NP(\boldw)$ depends only on the
$\Z_p$-span of the $w_i$.
\end{remark}

\begin{remark}
\label{R:linear combination}
Let $\boldw \in \Q_p[\![t]\!]^g - \{\boldzero\}$.
Let $R$ be the valuation ring of an unramified extension of $\Q_p$.
If $\lambda_1,\ldots,\lambda_g \in R$ have $\F_p$-independent images in $R/pR$,
then
\[ \NP(\boldw) = \NP(\lambda_1 w_1 + \cdots + \lambda_g w_g) . \]
\end{remark}

\begin{remark}
\label{R:NP and zeros}
For $w \in \C_p[\![t]\!] - \{0\}$ for which $n_w$ is defined,
the theory of Newton polygons 
\cite{Koblitz1984}*{Corollary on p.~106} 
implies that $n_w = \#\left(\textup{zeros of $w$ on $D_1$}\right)$; 
we write $\#(\;)$ instead of $\#\{\;\}$ to indicate that
we are counting zeros with multiplicity.
\end{remark}

Following \cite{Stoll2006-chabauty}*{Section~6},
for $n \ge 0$, define
\[ \delta(p,n) \colonequals \max\{d \ge 0 : v_p(n+1) + d \le v_p(n+d+1)\} \,. \]

\begin{proposition} \label{P:integral}
Suppose that $\boldw \in \Q_p[\![t]\!]^g \setminus \{\boldzero\}$
has $p$-adically bounded coefficients.
Let $\boldl \in \Q_p[\![t]\!]^g$ be such that $d\boldl/dt = \boldw$.
Then $\# \rho(\boldl(p\Z_p)) 
	\le p \bigl(n_{\boldw} + 1 + \delta(p, n_{\boldw}) \bigr) + 1$.
\end{proposition}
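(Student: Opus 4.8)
The plan is to reduce the analytic statement to the algebraic one in Proposition~\ref{P:s&r-image} by means of the $p$-adic Weierstrass preparation theorem. First I would observe that since $\boldw$ has $p$-adically bounded coefficients, after scaling we may assume $\boldw \in \Z_p[\![t]\!]^g$, and then the antiderivative $\boldl$ has coefficients $l_{j,n} = w_{j,n-1}/n$, so $v_p(l_{j,n}) \ge -v_p(n)$; this explains why the disk $p\Z_p$ (rather than $\Z_p$) appears, since substituting $t \mapsto pu$ with $u \in \Z_p$ tames the denominators. After this substitution we are looking at $\rho(\boldl(pu))$ for $u$ ranging over $\Z_p$, i.e. at the image under reduction of an analytic curve $\PP^1 \supseteq D_1 \to \PP^{g-1}$ given by a $g$-tuple of power series in $\Z_p[\![u]\!]$ (up to a common power of $p$ which does not affect the projective point). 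Crucially I should also add the constant term issue: $\boldl$ may have a nonzero constant term, but adding a constant vector does not change $\boldw = d\boldl/dt$ nor does it change $n_{\boldw}$; I would choose the constant of integration conveniently, or simply note the estimate is insensitive to it.

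Next, the key step: apply the $p$-adic Weierstrass preparation theorem to write each component series (after clearing the common valuation) as $p^{e}\cdot (\text{unit in } \Z_p[\![u]\!]) \cdot (\text{distinguished polynomial})$, or rather to the whole tuple simultaneously via its Newton polygon. The point is that the number of zeros in the open disk of a suitable generic $\Z_p$-linear combination of the components — which by Remark~\ref{R:linear combination} and Remark~\ref{R:NP and zeros} is governed by $n_{\boldw}$ for the derivative, shifted appropriately — lets me replace the tuple of power series by a tuple of \emph{polynomials} of controlled degree $d$, agreeing with the analytic curve to high enough $p$-adic precision that $\rho$ cannot distinguish them on $p\Z_p$. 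I then feed this polynomial map into Proposition~\ref{P:s&r-image} to get $\#\rho(\text{image}) \le dp + 1$. The bookkeeping to follow Stoll's Section~6: a series $w = d l/dt$ with Newton polygon controlled by $n_{\boldw}$ gives, after the $t \mapsto pu$ substitution, an effective degree bounded by $n_{\boldw} + 1 + \delta(p, n_{\boldw})$, where the $\delta$ term precisely accounts for the worst-case valuation loss in dividing by $n$ when passing from $w$ to $l$ (the definition of $\delta(p,n)$ is exactly the amount by which $v_p$ of a coefficient can be dragged down relative to what the Newton polygon of $\boldw$ predicts, after the shift by $1$ in index). Combining, $\#\rho(\boldl(p\Z_p)) \le p(n_{\boldw} + 1 + \delta(p,n_{\boldw})) + 1$.

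The main obstacle I anticipate is the precision argument: I must show that truncating the power series to a polynomial of the stated degree $d$ does not change $\rho$ of the image on the disk $p\Z_p$. Concretely, I need that the tail $\sum_{n > d} l_{j,n} (pu)^n$ is divisible by a higher power of $p$ than the ``leading part'' $\sum_{n \le d} l_{j,n}(pu)^n$ uniformly over $u \in \Z_p$ and over $j$ in the range where the minimal $y$-coordinate of $\NP(\boldw)$ is attained — so that in $\PP^{g-1}$ the reduction is unchanged. This is where the definition of $d$ via $n_{\boldw}$, $N_{\boldw}$ and $\delta(p,n_{\boldw})$ does its work: by the shape of the Newton polygon, for $n$ beyond $n_{\boldw}$ the points $(n, v_p(w_{j,n}))$ lie strictly above the minimizing horizontal edge, the substitution $t\mapsto pu$ adds $n$ to each valuation, and dividing by $n$ subtracts at most $v_p(n) \le \delta(p, n_{\boldw})$ in the relevant range — so the surviving terms have strictly smaller valuation, making the truncation invisible to $\rho$. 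Once that uniform estimate is in place, everything else is an application of Proposition~\ref{P:s&r-image} and the elementary identity extending a polynomial rational map $\Aff^1 \dashrightarrow \PP^{g-1}$ to a morphism $\PP^1 \to \PP^{g-1}$ of the same degree, as recalled at the start of Section~\ref{S:curves under reduction}.
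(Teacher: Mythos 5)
Your overall strategy matches the paper's: substitute $t \mapsto pt$, use $p$-adic Weierstrass preparation to pass to a tuple of polynomials, feed that into Proposition~\ref{P:s&r-image}, and control the polynomial degree by Stoll's Proposition~6.3 via the $g=1$ reduction of Remark~\ref{R:linear combination}. But there are two places where your execution has a genuine gap.

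First, the ``main obstacle'' you anticipate --- that truncating each $L_i = l_i(pt)$ to a polynomial of degree $d$ should give the same reduction $\rho$ because the tails are $p$-adically small --- does not actually go through, and in fact can produce the inequality in the \emph{wrong} direction. The truncated polynomials can acquire a common zero at some $\tau \in \Z_p$ at which the full tuple $\boldL(\tau)$ is nonzero (the leading parts can cancel exactly while the tails do not); at such $\tau$, $\rho(\boldL(\tau))$ is defined but $\rho$ of the truncation is not, and near such $\tau$ the reductions need not agree, so the image of the truncation can be a proper subset. Weierstrass preparation is not an approximation argument: it produces an \emph{exact} factorization $L_i = f_i u_i$ with $f_i$ a polynomial and $u_i \in 1 + pt\Z_p[\![t]\!]$ converging on~$\Z_p$, so that for $\tau \in \Z_p$ one has $u_i(\tau) \in 1 + p\Z_p$, and hence $\rho(\boldL(\tau)) = \rho(\boldf(\tau))$ on the nose, with no residual error to control. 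The paper uses this unit-cancellation, not a precision bound; if you reason in terms of WP rather than truncation, the obstacle you are worried about disappears.

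Second, you have not ensured that all the Weierstrass polynomials $f_i$ have degree $\le N_{\boldL}$. Applying WP component-by-component gives $\deg f_i$ governed by $\NP(L_i)$ alone, whose relevant vertex can lie strictly to the right of $N_{\boldL}$ when that component's own minimal coefficient valuation exceeds the joint minimum of $\NP(\boldL)$. The paper preempts this: letting $j$ be an index for which the controlling vertex of $\NP(\boldL)$ is a vertex of $\NP(L_j)$, one adds $w_j$ to every $w_i$ whose Newton polygon misses that vertex. This is a unipotent linear change of coordinates on the target $\PP^{g-1}(\F_p)$ (so it leaves the size of the $\rho$-image invariant), after which that vertex lies on every $\NP(L_i)$ and WP gives $\deg f_i = N_{\boldL}$ uniformly. (Alternatively one could observe that the ``bad'' coordinates always reduce to zero and may be replaced by $0$, but the paper's adjustment is cleaner and avoids a case analysis.) Without one of these two fixes, the degree you feed into Proposition~\ref{P:s&r-image} is not controlled, and the final bound does not follow.

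A smaller remark: your gloss of $\delta(p,n)$ as ``the amount $v_p$ can be dragged down in dividing by $n$ after the index shift'' is only a heuristic; the actual degree inequality $N_{\boldL} \le n_{\boldw} + 1 + \delta(p, n_{\boldw})$ is exactly Stoll's Proposition~6.3, which you correctly cite, and the paper reduces to the case $g = 1$ of that result via Remark~\ref{R:linear combination}.
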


\begin{proof}
Let $\boldL(t) \colonequals \boldl(pt)$.
The lattice point responsible for the value of $N_{\boldL}$ 
is a vertex of $\NP(L_j)$ for some $j$.
We ensure that it is a vertex of $\NP(L_i)$ for \emph{every} $i$
by adding $w_j$ to each $w_i$ for which this does not yet hold
(this has the effect of applying a linear change of variable
to the codomain $\PP^{g-1}(\F_p)$ of $\rho \circ \boldl$, 
but does not change the size of the image).
By the $p$-adic Weierstrass preparation theorem
(see \cite{Koblitz1984}*{p.~105, Theorem~14} and its proof),
$L_i = f_i u_i$
for some $f_i \in \Q_p[t]$ of degree $N_{\boldL}$ 
and $u_i \in 1 + p t \Z_p[\![t]\!]$ converging on~$\Z_p$.
Set $\boldf = (f_1, \ldots, f_g)$.
For $\tau \in \Z_p$, we have $u_i(\tau) \in 1 + p\Z_p$,
so $\rho(\boldL(\tau)) = \rho(\boldf(\tau))$.
Hence $\rho(\boldl(p\Z_p)) 
	= \rho(\boldL(\Z_p)) 
	= \rho(\boldf(\Z_p)),$
which has size at most $p N_{\boldL} + 1$
by Proposition~\ref{P:s&r-image} applied to the morphism $\PP^1 \to \PP^{g-1}$
defined by $\boldf$.
Finally,  
we prove $N_{\boldL} \le n_{\boldw} + 1 + \delta(p, n_{\boldw})$:
by Remark~\ref{R:linear combination}, we may reduce to the case $g=1$,
with coefficients now in an unramified extension; 
this case can be deduced easily by considering the slopes of the
(now standard) Newton polygon; 
cf.~\cite{Stoll2006-chabauty}*{Proposition~6.3}.
\end{proof}


\section{The logarithm map}
\label{S:logarithm}

Let $J$ be an abelian variety over $\Q_p$.
Let $T_0 J$ be the tangent space to $J$ at $0$.
Integrating $1$-forms defines an analytic group homomorphism
$\log \colon J(\Q_p) \to T_0 J \isom \Q_p^g$
whose kernel is the torsion subgroup $J(\Q_p)_{\tors}$.
Since $\log$ is a local diffeomorphism and $J(\Q_p)$ is compact,
its kernel $J(\Q_p)_{\tors}$ is finite,
and its image will be $\Z_p^g$
for a suitable choice of identification $T_0 J \isom \Q_p^g$;
this identification corresponds to 
a choice of basis $\omega_1,\ldots,\omega_g$ of $\HH^0(J,\Omega^1)$,
which we now fix.
Any commutative extension of $\Z_p^g$ by a finite abelian group
is split, even as a topological group,
so $J(\Q_p) \isom \Z_p^g \times J(\Q_p)_{\tors}$.
Let $\rholog$ be the composition
\[
	J(\Q_p) \stackrel{\log}\surjects \Z_p^g 
	\stackrel{\rho}\dashrightarrow \PP^{g-1}(\F_p),
\]
defined on $J(\Q_p) \setminus J(\Q_p)_{\tors}$.


\section{Image of the curve under the logarithm map}

For this section, let $C$ be a nice curve of genus $g \ge 1$ over $\Q_p$
with a $\Q_p$-point $\infty$.
Embed $C$ in its Jacobian $J$ by sending $\infty$ to $0$.
Define $\log$ as in Section~\ref{S:logarithm}.
Let $\calC \to \Spec \Z_p$ be the minimal proper regular model of $C$.

\subsection{Image of one residue disk}

\begin{definition}
A \defi{residue disk} $D \subseteq C(\C_p)$ is
the preimage of a point $P \in \calC^{\smooth}(\F_p)$
under $C(\C_p) = \calC(\calO_{\C_p}) \to \calC(\Fbar_p)$;
then let $D(\Q_p) \colonequals D \intersect C(\Q_p)$.
A \defi{uniformizer} for $D$
is a regular function $t$ on an open neighborhood of $P$ in $\calC$
such that $t$ reduces to a uniformizer at $P$ on $\calC_{\F_p}$.
\end{definition}

Every point of $C(\Q_p)$ reduces to a point of $\calC^{\smooth}(\F_p)$,
so $C(\Q_p)$ is the disjoint union of the open sets $D(\Q_p)$.

Let $D$ be a residue disk with uniformizer $t$.
Then $t$ defines a diffeomorphism $D \to D_1$
identifying $D(\Q_p)$ with $p\Z_p$.
The restriction of any $\omega \in \HH^0(C,\Omega^1)$ to $D$
corresponds to an analytic $1$-form $w(t)\,dt$ on $D_1$, 
for some $w \in \Q_p[\![t]\!]$ with bounded coefficients.
Applying this to $\omega_i|_C$ defines some $w_i$.
Let $\boldw \colonequals (w_1,\ldots,w_g)$
and $n_D \colonequals n_{\boldw}$.

\begin{proposition} \label{P:sigmaC_on_residue_disk}
We have 
$\# \rholog(D(\Q_p)) \le p \bigl(n_D + 1 + \delta(p, n_D)\bigr) + 1.$
\end{proposition}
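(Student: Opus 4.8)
The statement is essentially a direct corollary of Proposition~\ref{P:integral} once we have matched up the data on the residue disk $D$ with the hypotheses of that proposition. The key observation is that the diffeomorphism $D \isom D_1$ given by the uniformizer $t$ identifies $D(\Q_p)$ with $p\Z_p$, and under this identification the restriction of the canonical $1$-form $\omega_i|_C$ becomes $w_i(t)\,dt$ with $w_i \in \Q_p[\![t]\!]$ having bounded coefficients, so $\boldw = (w_1,\dots,w_g)$ satisfies the hypothesis of Proposition~\ref{P:integral}. What remains is to identify the composite $\rholog|_{D(\Q_p)}$ with the map $\rho \circ \boldl$ appearing there, where $\boldl$ is an antiderivative of $\boldw$.

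\begin{proof}
Choose the uniformizer $t$ for $D$, giving a diffeomorphism $D \isom D_1$
under which $D(\Q_p)$ corresponds to $p\Z_p$ and $\infty \in D(\Q_p)$ corresponds
to $0 \in p\Z_p$ (we may assume $\infty \in D$; otherwise translate so that
some chosen point of $D(\Q_p)$ maps to $0$, which does not affect the image).
By definition of the $w_i$, the restriction of $\omega_i|_C$ to $D$
is $w_i(t)\,dt$ with $w_i \in \Q_p[\![t]\!]$ having $p$-adically bounded
coefficients; set $\boldw \colonequals (w_1,\dots,w_g)$, so $n_D = n_{\boldw}$.
Let $\boldl \colonequals (l_1,\dots,l_g) \in \Q_p[\![t]\!]^g$ be the
antiderivative of $\boldw$ with $\boldl(0) = \boldzero$, i.e.,
$d\boldl/dt = \boldw$.

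The map $\log \colon J(\Q_p) \to \Q_p^g$ of Section~\ref{S:logarithm} is
computed by integrating the forms $\omega_i$ along a path from $0$ to the
given point. For a point $Q \in D(\Q_p)$ corresponding to $\tau \in p\Z_p$,
the composite $C(\Q_p) \injects J(\Q_p) \stackrel{\log}{\to} \Q_p^g$ pulls
$\omega_i$ back to $w_i(t)\,dt$ on $D$, so its $i$-th coordinate at $Q$
equals $\int_0^{\tau} w_i(t)\,dt = l_i(\tau)$ (the path may be taken inside
the residue disk, and $\boldl(0) = \boldzero$ matches
$\log(\infty) = \log(0) = \boldzero$). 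Hence $\log|_{D(\Q_p)}$, read in
coordinates, is $\tau \mapsto \boldl(\tau)$ for $\tau \in p\Z_p$, and
therefore $\rholog(D(\Q_p)) = \rho(\boldl(p\Z_p))$ as subsets of
$\PP^{g-1}(\F_p)$ (the map $\rho$ here being the composite
$\Q_p^g \setminus \{\boldzero\} \to \PP^{g-1}(\F_p)$, consistent with the
notation of Section~\ref{S:notation}).

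Applying Proposition~\ref{P:integral} to $\boldw$ and $\boldl$ gives
\[
	\# \rholog(D(\Q_p)) = \# \rho(\boldl(p\Z_p))
		\le p\bigl(n_{\boldw} + 1 + \delta(p, n_{\boldw})\bigr) + 1
		= p\bigl(n_D + 1 + \delta(p, n_D)\bigr) + 1 . \qedhere
\]
\end{proof}

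\noindent The only mild subtlety, and the step most worth spelling out
carefully, is the identification of $\log|_{D(\Q_p)}$ in coordinates with the
formal antiderivative $\boldl$ of $\boldw$: one must check that the
Coleman-type integral of $\omega_i$ from $0$ to $Q$, when $Q$ lies in the same
residue disk as the basepoint, is given simply by the naive $p$-adic integral
of the local power series $w_i(t)\,dt$ along $p\Z_p$, with constant of
integration fixed by $\boldl(0) = \boldzero$. This is standard, but it is where
the genus-$g$ statement is reduced to the one-variable Newton-polygon input of
Proposition~\ref{P:integral}. Everything else is bookkeeping about the map
$\rho$ and the matching of $n_D$ with $n_{\boldw}$.
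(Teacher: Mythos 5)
Your proof follows the same route as the paper's: identify the composite
$p\Z_p \isomto D(\Q_p) \injects C(\Q_p) \injects J(\Q_p) \stackrel{\log}{\to} \Q_p^g$
with an antiderivative $\boldl$ of $\boldw$, and invoke Proposition~\ref{P:integral}. One small wrinkle is worth flagging: the normalization $\boldl(0) = \boldzero$ together with the parenthetical ``we may assume $\infty \in D$; otherwise translate\ldots'' is both unnecessary and not quite right. A general residue disk $D$ of course need not contain $\infty$, and if the center $P_0$ of $D$ (the point of $D(\Q_p)$ at $t = 0$) is not $\infty$, then the antiderivative arising from the composite has constant term $\log(P_0) \ne \boldzero$; shifting $P_0$ to make the constant term vanish would change $\boldl$, not $\log|_{D(\Q_p)}$. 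The fix is simply to drop the normalization: Proposition~\ref{P:integral} is stated for an \emph{arbitrary} $\boldl$ with $d\boldl/dt = \boldw$, which is exactly what the paper's proof quietly exploits. With that adjustment your argument coincides with the one in the paper.
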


\begin{proof}
Since $\log$ is defined by integrating $(\omega_1,\ldots,\omega_g)$,
the composition
\[
\xymatrix{
	p\Z_p \ar[r]^-{\sim} 
	& D(\Q_p) \ar@{^{(}->}[r]
	& C(\Q_p) \ar@{^{(}->}[r]
	& J(\Q_p) \ar[r]^-{\log} 
	& \Q_p^g \\
}
\]
is some $\boldl \in \Q_p[\![t]\!]^g$ with $d\boldl/dt = \boldw$.
Then
\[
	\# \rholog(D(\Q_p)) 
	= \# \rho(\boldl(p\Z_p))
	\le p \bigl(n_D + 1 + \delta(p, n_D)\bigr) + 1
\]
by Proposition~\ref{P:integral}.
\end{proof}

\subsection{Image of many residue disks}

\begin{lemma}
\label{L:sum of n_D}
Let $\calD$ be the set of residue disks on $C$.
Then $\sum_{D \in \calD} n_D \le 2g-2$.
\end{lemma}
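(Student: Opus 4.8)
The quantity $n_D$ counts (with multiplicity) the zeros on the residue disk $D$ of the tuple of power series $\boldw = (w_1,\ldots,w_g)$ obtained by restricting the basis $1$-forms $\omega_1,\ldots,\omega_g$ of $\HH^0(C,\Omega^1)$ to $D$ and writing them as $w_i(t)\,dt$ in a local uniformizer. By Remark~\ref{R:NP and zeros}, $n_{\boldw}$ equals $n_{w_i}$ for a generic $\Z_p$-linear combination $w_i$ of the $w_j$; more to the point, the common zeros of all the $w_j$ on $D$ are exactly the zeros of the $1$-form $\omega \colonequals \lambda_1\omega_1 + \cdots + \lambda_g\omega_g$ (for suitable $\lambda_i$) restricted to $D$, counted with multiplicity. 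So the plan is to interpret $\sum_{D} n_D$ as a count of zeros of a single global differential on $C$ and invoke the fact that a nonzero element of $\HH^0(C,\Omega^1)$ has exactly $2g-2$ zeros.

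The first step is to choose a single $\omega \in \HH^0(C,\Omega^1)$, say $\omega = \sum \lambda_i \omega_i$ with $\lambda_i \in \Z_p$ (or in the valuation ring of an unramified extension, as in Remark~\ref{R:linear combination}), such that for \emph{every} residue disk $D$ simultaneously, the local expansion of $\omega|_D$ has the property that its order of vanishing at the reduction point equals $n_D$. A single generic choice of $(\lambda_i)$ works: for each $D$, the "bad" choices are those lying in a proper subspace mod $p$ (the one where the leading coefficient at height $v_p$-minimum cancels), and since there are only finitely many residue disks — indeed only finitely many $D$ with $n_D \neq 0$, because $\omega|_D$ is a unit on all but finitely many disks — one can avoid all the bad subspaces, possibly after passing to a large enough unramified extension. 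I would phrase this using Remark~\ref{R:linear combination}, which already packages exactly this genericity statement.

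The second step is the geometric count. Fix such an $\omega$. On each residue disk $D$, the order of vanishing of $\omega|_D$ at the center point $P \in \calC^{\smooth}(\F_p)$ is at least $n_D$ (the zeros on $D_1$ of the power series $w_i(t)$ all reduce to $t = 0$, i.e. to $P$, so they contribute to the order of $\omega$ at $P$ viewed as a section over $\calO_{\C_p}$ — here one wants to be slightly careful and argue that a zero of $w$ in the open disk $D_1 \subset \C_p$ forces the order of vanishing of the corresponding differential on $C_{\C_p}$ at a $\C_p$-point reducing to $P$, summed over such points, to be at least $n_D$). Then $\sum_D n_D \le \sum_{Q \in C(\C_p)} \ord_Q(\omega) = \deg(\divv \omega) = 2g-2$, since $\omega$ is a nonzero global regular differential on the nice curve $C$ of genus $g$ and $\Omega^1_C \cong \calO_C(K_C)$ with $\deg K_C = 2g-2$. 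The disjointness of the residue disks (noted just before the statement: $C(\Q_p)$ is the disjoint union of the $D(\Q_p)$, and the underlying $\C_p$-disks are disjoint as well since they sit over distinct points of $\calC^{\smooth}(\F_p)$) is what makes the sum over $D$ a sub-sum of the sum over all points of $C(\C_p)$.

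The main obstacle is the bookkeeping in step two: relating the count $n_D$ of zeros of the power-series tuple $\boldw$ "in the coordinate $t$ on $D_1$" to the order of vanishing of the algebraic differential $\omega$ at points of $C(\C_p)$ reducing to $P$. One has to check that $t$ being a uniformizer for the residue disk means exactly that $dt$ generates $\Omega^1$ near $P$ on the model, so that $\ord$ of $\omega|_D$ in the variable $t$ matches the algebraic order, and that a zero of the power series at $t = \tau$ with $|\tau| < 1$ corresponds to a genuine $\C_p$-point of $C$ reducing to $P$ where $\omega$ vanishes to the same order. None of this is deep, but it is the step where the nonarchimedean analysis (Newton polygons, Remark~\ref{R:NP and zeros}) has to be matched cleanly with the algebraic geometry of $\divv(\omega)$ on $C$; once that identification is in place, the bound $2g-2$ is immediate from $\deg K_C = 2g-2$.
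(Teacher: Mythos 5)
Your proposal is correct and takes essentially the same route as the paper: form a single global differential $\omega = \sum \lambda_i \omega_i$ with $\lambda_i$ in the valuation ring of an unramified extension having $\F_p$-independent images (Remark~\ref{R:linear combination}), use Newton polygon theory (Remark~\ref{R:NP and zeros}) to identify $n_D$ with the number of zeros of $\omega$ on $D$, and bound the sum by $\deg K_C = 2g-2$. The one small overcomplication is your worry about avoiding finitely many ``bad subspaces'' disk by disk: Remark~\ref{R:linear combination} already gives a \emph{uniform} condition on the $\lambda_i$ that is independent of the disk, so one choice of $\omega$ works for all residue disks simultaneously and no intersection-of-generics argument is needed; and the relation between $n_D$ and the zeros of $\omega$ on $D$ is in fact an equality (counted with multiplicity over $\C_p$-points of $D$), not just an inequality, though the inequality suffices for the conclusion.
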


\begin{proof}
Let $\lambda_1,\ldots,\lambda_g$ be as in Remark~\ref{R:linear combination}.
Let $\omega = \sum_{i=1}^g \lambda_i \omega_i|_C \in \HH^0(C_{\C_p},\Omega^1)$.
On a residue disk $D$ with uniformizer $t$, 
express $\omega$ as $w(t)\,dt$,
so $w  = \sum_{i=1}^g \lambda_i w_i \in \C_p[\![t]\!]$;
then 
\[
	n_D = n_{\boldw} = n_w = \#\left(\textup{zeros of $w$ on $D_1$}\right)
	= \#\left(\textup{zeros of $\omega$ on $D$}\right),
\]
by Remark~\ref{R:NP and zeros}.
Thus
\[
	\sum_{D \in \calD} n_D \le 
	\#\left(\textup{zeros of $\omega$ on $C(\C_p)$}\right)
	= 2g-2.\qedhere
\]
\end{proof}

\begin{proposition} 
\label{P:image of general C}
Let $d \colonequals \#\calC^{\smooth}(\F_p)$.
Then 
\[
	\#\rholog(C(\Q_p)) \le 
	\begin{cases}
		5d + 6g - 6, &\textup{if $p=2$,} \\
		(p+1)d + \dfrac{p^2-p}{p-2}(2g-2), &\textup{if $p>2$.} \\
	\end{cases}
\]
\end{proposition}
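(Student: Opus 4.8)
The plan is to combine the per--residue--disk bound of Proposition~\ref{P:sigmaC_on_residue_disk} with the global constraint $\sum_{D} n_D \le 2g-2$ of Lemma~\ref{L:sum of n_D}. Since $C(\Q_p)$ is the disjoint union of the $D(\Q_p)$ over $D \in \calD$, we have
\[
	\#\rholog(C(\Q_p)) \le \sum_{D \in \calD} \#\rholog(D(\Q_p))
	\le \sum_{D \in \calD} \bigl( p\,n_D + p + p\,\delta(p,n_D) + 1 \bigr).
\]
There are exactly $d = \#\calC^{\smooth}(\F_p)$ residue disks, so the terms $p+1$ contribute $(p+1)d$, and the terms $p\,n_D$ contribute at most $p(2g-2)$ by Lemma~\ref{L:sum of n_D}. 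The remaining work is to bound $\sum_{D} \delta(p,n_D)$ in terms of $\sum_D n_D \le 2g-2$.

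First I would record the elementary estimate on $\delta(p,n)$. By definition $\delta(p,n) = \max\{d \ge 0 : v_p(n+1)+d \le v_p(n+d+1)\}$; since $v_p(n+d+1) \le \log_p(n+d+1)$ while the left side grows linearly in $d$, one gets a bound of the shape $\delta(p,n) \le c_p \cdot n$ for $n \ge 1$ (and $\delta(p,0)=0$ when $p \nmid 1$, i.e.\ always, so disks with $n_D=0$ cost nothing extra here beyond the $p+1$ already counted) — more precisely one checks $\delta(p,n) \le \frac{p-1}{p-2}\,n$ for $p > 2$ and $\delta(2,n) \le 2n$, or whatever explicit constants make the two cases in the statement come out. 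Actually the cleanest route is: for $p>2$, if $\delta(p,n)=d$ then $n+1 \le p^{v_p(n+1)} \le p^{v_p(n+d+1)}$... I would instead verify directly that $\delta(p,n) \le \frac{p-1}{p-2} n$ for $p \geq 3$ and $n \geq 1$ (it suffices to treat $n$ a power of $p$ minus the appropriate shift, the worst case), and $\delta(2,n) \le 2n$ for $n \geq 1$. Then, since $x \mapsto \delta(p,x)$ is dominated by a linear function vanishing at $0$, $\sum_D \delta(p,n_D) \le (\text{const}) \sum_D n_D \le (\text{const})(2g-2)$.

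Assembling: for $p>2$,
\[
	\#\rholog(C(\Q_p)) \le (p+1)d + p(2g-2) + p\cdot\frac{p-1}{p-2}(2g-2)
	= (p+1)d + \Bigl(p + \frac{p(p-1)}{p-2}\Bigr)(2g-2),
\]
and $p + \frac{p(p-1)}{p-2} = \frac{p(p-2)+p(p-1)}{p-2} = \frac{p(2p-3)}{p-2}$; one then checks this equals (or is bounded by) $\frac{p^2-p}{p-2}$ — so I would instead use the sharper $\delta(p,n) \le \frac{1}{p-2} n$ for the relevant $n$ (which is what makes $p + \frac{p}{p-2} = \frac{p^2-p}{p-2}$ work out exactly), deriving it from the $p$-adic inequality defining $\delta$. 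For $p=2$, using $\delta(2,n) \le 2n$ gives $\#\rholog(C(\Q_p)) \le 3d + 2(2g-2) + 2\cdot 2(2g-2) = 3d + 6(2g-2)$; to reach the stated $5d + 6g-6 = 5d + 3(2g-2)$ I must be more careful about disks with $n_D=0$ versus $n_D\ge 1$ and about how $\delta(2,n_D)$ interacts with the coefficient $3=p+1$ — presumably counting the $5d$ absorbs a $2d$ coming from the at most $2g-2$ disks that can have positive $n_D$. The main obstacle, and the only non-bookkeeping point, is pinning down the exact constant in $\delta(p,n) \le c\,n$ that is sharp enough to yield the stated coefficients; once that explicit combinatorial lemma on $\delta(p,n)$ is in hand, the rest is summation over $\calD$ using Lemma~\ref{L:sum of n_D}.
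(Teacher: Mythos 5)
Your decomposition is the same as the paper's: sum the per-disk bound of Proposition~\ref{P:sigmaC_on_residue_disk} over the $d$ residue disks and use Lemma~\ref{L:sum of n_D} to control $\sum_D n_D$. The gap is precisely where you flag it: you never establish the needed estimates on $\delta(p,n)$, and your candidate bounds are either false or unjustified.

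For $p=2$, a pure multiple of $n$ cannot work, because $\delta(2,0)=1$: indeed $\delta(2,0)=\max\{d:d\le v_2(d+1)\}$ and $d=1$ satisfies $1\le v_2(2)$, so your proposed $\delta(2,n)\le 2n$ fails already at $n=0$. The correct estimate, which the paper uses, is the \emph{affine} bound $\delta(2,n)\le 1+n/2$. That extra $+1$ per disk is exactly what turns the $3d$ in your computation into $5d$: summing $p\bigl(n_D+1+\delta(2,n_D)\bigr)+1 \le 3 + 2n_D + 2(1+n_D/2) = 5 + 3n_D$ over the $d$ disks and invoking Lemma~\ref{L:sum of n_D} gives $5d + 3(2g-2)=5d+6g-6$. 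Your guess that ``the $5d$ absorbs a $2d$ coming from the at most $2g-2$ disks with positive $n_D$'' is not the mechanism; the $2d$ comes from the constant term of the affine $\delta$-bound applied to every disk, including those with $n_D=0$.

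For $p>2$, the pointwise inequality $\delta(p,n)\le n/(p-2)$ that you reverse-engineer is not how the paper proceeds; the paper instead bounds the \emph{aggregate}
\[
\Delta_p(d,N)=\max\Bigl\{\sum_{j=1}^d\delta(p,n_j):n_j\ge 0,\ \textstyle\sum n_j\le N\Bigr\}
\]
by $N/(p-2)$, citing Lemma~6.2 of the Stoll Chabauty paper. You would need to either prove your proposed pointwise bound (you only assert it) or, like the paper, invoke the aggregate lemma. As written, the proposal identifies the right reduction but leaves the one nontrivial analytic input, the $\delta$-estimate, as an acknowledged hole.
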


\begin{proof}
Sum the bound of Proposition~\ref{P:sigmaC_on_residue_disk} over all $D$
and use Lemma~\ref{L:sum of n_D} to obtain
\[
	\#\rholog(C(\Q_p)) \le 
	p(2g-2) + (p+1)d + p \sum_D \delta(p,n_D).
\]
If $p=2$, use the bound $\delta(2,n) \le 1 + n/2$ 
(and use Lemma~\ref{L:sum of n_D} again) to conclude.
If $p>2$, then $\sum_D \delta(p,n_D) \le \Delta_p(d,2g-2)$,
where
\[
	\Delta_p(d,N) \colonequals 
	\max\Bigl\{ \sum_{j=1}^d \delta(p, n_j) : 
		n_j \in \Z_{\ge 0} \textup{ and } \sum_{j=1}^d n_j \le N \Bigr\};
\]
use the bound $\Delta_p(d,N) \le N/(p-2)$ 
of \cite{Stoll2006-chabauty}*{Lemma~6.2}.
\end{proof}


\section{Image of the rational points under the logarithm map}

We now assume that $C$ is a nice curve of genus $g \ge 1$ over~$\Q$
with a $\Q$-point $\infty$, which we use as base-point for embedding $C$
in its Jacobian~$J$.
Taking Galois cohomology of 
$0 \to J[p] \to J \stackrel{p}{\to} J \to 0$
over $\Q$ and over $\Q_v$ for all places $v$ of $\Q$
yields the rows in the following commutative diagram,
where $\delta$ now denotes a connecting homomorphism: 
\[
\begin{split}
  \xymatrix{ \dfrac{J(\Q)}{p J(\Q)} \ar[d] \ar@{^{(}->}[r]^-{\delta}
                       & \HH^1(\Q, J[p]) \ar[d]^{\res} \\
              \displaystyle\prod_v \dfrac{J(\Q_v)}{p J(\Q_v)} \ar@{^{(}->}[r]^-{\delta'}
                       & \displaystyle\prod_v \HH^1(\Q_v, J[p]). \\
            }
\end{split}
\]
The \defi{$p$-Selmer group} of $J$ is defined by
\[
	\Sel_p J \colonequals \{\, \xi \in \HH^1(\Q,J[p]) : \res(\xi) \in \im(\delta') \,\}.
\]
In particular, $\res$ restricts to a homomorphism
$\Sel_p J \to J(\Q_p)/p J(\Q_p)$.
Since the $p$-adic closure $\overline{J(\Q)}$ 
contains a finite-index closed subgroup
that is a free $\Z_p$-module of finite rank,
the natural map 
$\mu \colon J(\Q)/p J(\Q) \to \overline{J(\Q)}/p \overline{J(\Q)}$ 
is surjective.
Choose $\log \colon J(\Q_p) \surjects \Z_p^g$ as in Section~\ref{S:logarithm}.
Then we have a diagram
\begin{equation}
\begin{split}
\label{E:big}
\xymatrix{
C(\Q) \ar@{^{(}->}[rr] \ar@{^{(}->}[d] && C(\Q_p) \ar@{^{(}->}[d] \\
J(\Q) \ar@{^{(}->}[r] \ar@{->>}[d] & \overline{J(\Q)} \ar@{^{(}->}[r] \ar@{->>}[d] & J(\Q_p) \ar@{->>}[r]^-{\log} \ar@{->>}[d] \ar@/^5pc/@{-->}[rrd]^-{\rholog} & \Z_p^g \ar@{-->}[rd]^-{\rho} \ar@{->>}[d] \\
\dfrac{J(\Q)}{pJ(\Q)} \ar@{->>}[r]^{\mu} \ar@{^{(}->}[rd]_-{\delta} & \dfrac{\overline{J(\Q)}}{p\overline{J(\Q)}} \ar[r] & \dfrac{J(\Q_p)}{pJ(\Q_p)} \ar@{->>}[r]^-{\log \tensor \F_p} & \F_p^g \ar@{-->}[r]^-{\PP} & \PP^{g-1}(\F_p) \\
& \Sel_p J \ar[ru] \ar@/_/[rru]_-{\sigma} \ar@(r,d)@{-->}[rrru]_-{\PP\sigma} \\
}
\end{split}
\end{equation}
in which $\sigma$ and $\PP\sigma$ are defined as the compositions,
so that the diagram commutes on elements for which the maps are defined.

Let $J(\Q_p)[p']$ be the set of points of finite order prime to $p$ 
in~$J(\Q_p)_{\tors}$. 
By Section~\ref{S:logarithm}, $J(\Q_p) \isom \Z_p^g \times F$
for a finite abelian group $F$,
so $J(\Q_p)[p']$ is the set of points that are
infinitely $p$-divisible in~$J(\Q_p)$.

\begin{proposition}
\label{P:rational points are odd}
If $\sigma$ is injective 
and the images $\rholog(C(\Q_p))$ and $\PP\sigma(\Sel_p J)$ are disjoint,
then $C(\Q_p) \intersect \overline{J(\Q)} \subseteq J(\Q_p)[p']$.
\end{proposition}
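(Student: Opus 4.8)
The plan is to combine Lemma~\ref{L:sigma_Sel} with the disjointness hypothesis and then use the basic structure of $\rholog$. First I would take a point $P \in C(\Q_p) \intersect \overline{J(\Q)}$ and argue by contradiction: suppose $P \notin J(\Q_p)[p']$, i.e., $P$ is not a torsion point of order prime to~$p$. Since $\sigma$ is injective, Lemma~\ref{L:sigma_Sel} gives $\overline{J(\Q)}[p^\infty]=0$; in particular $\overline{J(\Q)}$ has no nonzero $p$-power torsion, so $P$ is not a nonzero element of $J(\Q_p)[p^\infty]$ either. Combined with $P \notin J(\Q_p)[p']$, and the fact that $J(\Q_p)_{\tors} = J(\Q_p)[p^\infty] \oplus J(\Q_p)[p']$ (from $J(\Q_p) \isom \Z_p^g \times J(\Q_p)_{\tors}$ and the finiteness of the torsion), the only torsion possibility left for $P$ is $P=0$. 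But $0 \in J(\Q_p)[p']$, so in fact $P$ must be non-torsion.

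Now that $P$ is non-torsion, $\rholog(P)$ is defined. On one hand, $P \in C(\Q_p)$, so $\rholog(P) \in \rholog(C(\Q_p))$. On the other hand, $P \in \overline{J(\Q)}$, so by the second conclusion of Lemma~\ref{L:sigma_Sel}, $\rholog(P) \in \rholog(\overline{J(\Q)}) \subseteq \PP\sigma(\Sel_p J)$. Thus $\rholog(P)$ lies in both $\rholog(C(\Q_p))$ and $\PP\sigma(\Sel_p J)$, contradicting the assumed disjointness of these two images. Hence no such $P$ exists outside $J(\Q_p)[p']$, which is exactly the claim $C(\Q_p) \intersect \overline{J(\Q)} \subseteq J(\Q_p)[p']$.

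The only subtlety — and the step I would be most careful about — is the bookkeeping around torsion in the first paragraph: making sure that a torsion point $P \in \overline{J(\Q)}$ which is not killed by a power of $p$ must actually be of order prime to~$p$ (so lands in $J(\Q_p)[p']$), using the direct-sum decomposition of $J(\Q_p)_{\tors}$ and the vanishing $\overline{J(\Q)}[p^\infty]=0$. Everything else is a direct invocation of Lemma~\ref{L:sigma_Sel} and the disjointness hypothesis, with the case split ``$P$ torsion'' versus ``$P$ non-torsion'' handling the two possibilities cleanly. I do not anticipate any real obstacle beyond this; the proof should be only a few lines.

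\begin{proof}[Proof of Proposition~\ref{P:rational points are odd}]
Let $P \in C(\Q_p) \intersect \overline{J(\Q)}$.
If $P$ is torsion, then $P \in J(\Q_p)_{\tors}$;
writing $J(\Q_p)_{\tors} = J(\Q_p)[p^\infty] \directsum J(\Q_p)[p']$,
we may write $P = P_1 + P_2$ with $P_1 \in J(\Q_p)[p^\infty]$ and $P_2 \in J(\Q_p)[p']$.
Since $\sigma$ is injective, Lemma~\ref{L:sigma_Sel} gives $\overline{J(\Q)}[p^\infty]=0$;
as $P_1 = P - P_2$ lies in $\overline{J(\Q)}$ only if $P_2 \in \overline{J(\Q)}$ too, we instead argue directly:
$P_1$ is a $\Q_p$-point, and $P_1 \in J(\Q_p)[p^\infty]$, but $P_1$ need not lie in $\overline{J(\Q)}$.
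So we argue differently: suppose $P \notin J(\Q_p)[p']$.
Then $P_1 \neq 0$.
Now $\log P = 0$ since $P$ is torsion, so the image of $P$ in $\F_p^g$ is $0$;
by the diagram chase in the proof of Lemma~\ref{L:sigma_Sel},
the image of $P$ in $\overline{J(\Q)}/p\overline{J(\Q)}$ is $0$, i.e.\ $P \in p\overline{J(\Q)}$.
Repeating, $P \in p^n \overline{J(\Q)}$ for all $n$; since $P$ is torsion of some order $m$, write $m = p^a m'$ with $p \notdiv m'$.
Then $p^a P = m' \cdot (\text{a unit multiple of } P_2)$ lies in $J(\Q_p)[p']$, but also $p^a P \in \overline{J(\Q)}$ and is $p^\infty$-divisible in $\overline{J(\Q)}$, forcing (as $\overline{J(\Q)}$ is an extension of a finite free $\Z_p$-module by a finite group) $p^a P = 0$; hence $P \in J(\Q_p)[p']$, a contradiction.
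Therefore every torsion $P \in C(\Q_p) \intersect \overline{J(\Q)}$ lies in $J(\Q_p)[p']$.

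Now suppose $P$ is non-torsion.
Then $\rholog(P)$ is defined, and $\rholog(P) \in \rholog(C(\Q_p))$ since $P \in C(\Q_p)$.
On the other hand, $P \in \overline{J(\Q)}$, so by Lemma~\ref{L:sigma_Sel},
$\rholog(P) \in \rholog(\overline{J(\Q)}) \subseteq \PP\sigma(\Sel_p J)$.
This contradicts the hypothesis that $\rholog(C(\Q_p))$ and $\PP\sigma(\Sel_p J)$ are disjoint.
Hence there is no non-torsion point in $C(\Q_p) \intersect \overline{J(\Q)}$,
and we conclude $C(\Q_p) \intersect \overline{J(\Q)} \subseteq J(\Q_p)[p']$.
\end{proof}
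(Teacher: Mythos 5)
Your second paragraph (the non-torsion case) is correct and is exactly the paper's argument. The first paragraph (the torsion case), however, has a genuine error and also overlooks a much shorter route.

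\emph{The error.} You claim that because $p^a P$ is $p^\infty$-divisible in $\overline{J(\Q)}$ and $\overline{J(\Q)}$ is an extension involving a free $\Z_p$-module and a finite group, $p^a P$ must be~$0$. This is false. Once $\sigma$ is injective, Lemma~\ref{L:sigma_Sel} gives $\overline{J(\Q)}[p^\infty]=0$, and since $\overline{J(\Q)}_{\tors}$ is finite (it sits inside $J(\Q_p)_{\tors}$), this forces $\overline{J(\Q)} \isom \Z_p^r \times T$ with $T \colonequals \overline{J(\Q)}_{\tors}$ a finite group of order prime to~$p$. Multiplication by~$p$ is then \emph{invertible} on~$T$, so every element of~$T$ is $p^\infty$-divisible in $\overline{J(\Q)}$. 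Your divisibility argument therefore only recovers $p^a P \in T = \overline{J(\Q)}[p']$, which is no contradiction at all when $T \neq 0$.

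\emph{The shortcut you missed.} Your worry that ``$P_1$ need not lie in $\overline{J(\Q)}$'' is unfounded: in the primary decomposition $P = P_1 + P_2$ of a torsion element of order $p^a m'$ (with $p \nmid m'$), both components are integer multiples of~$P$ (if $c p^a + d m' = 1$, then $P_1 = d m' P$), hence lie in any subgroup containing~$P$, in particular in $\overline{J(\Q)}$. So $P_1 \in \overline{J(\Q)}[p^\infty]=0$, giving $P = P_2 \in J(\Q_p)[p']$ directly. The paper's version is even more immediate: a torsion point of $C(\Q_p) \intersect \overline{J(\Q)}$ lies in $\overline{J(\Q)}_{\tors}$, and by Lemma~\ref{L:sigma_Sel} this finite group has trivial $p$-primary part, so all its elements have order prime to~$p$. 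Your divisibility detour is unnecessary and, as written, broken; moreover the ``repeating'' step would in any case need the added observation that $P = pQ$ with $P$ torsion forces $Q$ to be torsion before one may iterate.
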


\begin{proof}
Suppose that the hypotheses hold.
Then $\sigma \delta$ is injective,
and \eqref{E:big} shows that the surjection $\mu$ is an isomorphism 
and that $\overline{J(\Q)}/p\overline{J(\Q)} \to \F_p^g$
is injective.

Suppose also that the conclusion fails;
fix $P \in C(\Q_p) \intersect \overline{J(\Q)}$ not in~$J(\Q_p)[p']$. 
Then $P$ is not infinitely $p$-divisible in $\overline{J(\Q)}$,
so $P = p^n Q$ for some $n \ge 0$
and some $Q \in \overline{J(\Q)}$ outside $p \overline{J(\Q)}$.
Let $\bar{Q}$ be the image of $Q$ in $\overline{J(\Q)}/p\overline{J(\Q)}$.
Then $\bar{Q}$ is nonzero, so its image in $\F_p^g$ is nonzero, 
and $\PP\sigma\bigl(\delta\mu^{-1}(\bar{Q})\bigr)$ is defined. 
Tracing through \eqref{E:big}, we have
\[ \PP\sigma(\Sel_p J) \ni \PP\sigma\bigl(\delta\mu^{-1}(\bar{Q})\bigr)
     = \rholog(Q) = \rholog(p^n Q) = \rholog(P) \in \rholog(C(\Q_p)),
\]
contradicting the assumption that 
$\rholog(C(\Q_p))$ and $\PP\sigma(\Sel_p J)$ are disjoint.
\end{proof}

We state the following consequence explicitly, since it may have applications
outside the context of this paper.

\begin{corollary}
\label{C:useful outside the paper}
  Let $C$ be a nice curve of genus~$g \ge 1$ over~$\Q$ with a rational point
  $\infty \in C(\Q)$. We embed $C$ in its Jacobian~$J$ using $\infty$ as base-point.
  Let $p$ be a prime number such that in diagram~\eqref{E:big} $\sigma$ is injective
  and the images $\rholog(C(\Q_p))$ and $\PP\sigma(\Sel_p J)$ are disjoint.
  Then $C(\Q) \subseteq J(\Q)[p']$.
\end{corollary}

\begin{proof}
  Apply Proposition~\ref{P:rational points are odd} and use that
  $C(\Q) = C(\Q_p) \intersect J(\Q) \subseteq C(\Q_p) \intersect \overline{J(\Q)}$.
\end{proof}

Given $C$, $\infty$, and a prime $p$,
the hypotheses on $p$ in Corollary~\ref{C:useful outside the paper}
can be checked explicitly:
\begin{itemize}
  \item We can compute a regular model~$\calC$ of~$C$ over~$\Z_p$, which
        gives us a covering of~$C(\Q_p)$ by residue disks.
  \item We can compute a basis of the space of regular 1-forms on~$C$
        and their integrals on a set of representatives of generators
        of $J(\Q_p)/(p J(\Q_p) + J(\Q_p)_\tors)$. This gives us
        $\log \colon J(\Q_p) \surjects \Z_p^g$.
  \item We can then compute $\rholog(C(\Q_p))$ by evaluating $\rholog$
        on each residue disk.
  \item The $p$-Selmer group of~$J$ can be computed (at least in principle).
        This computation also provides the map $\Sel_p J \to J(\Q_p)/p J(\Q_p)$.
  \item By post-composing with $\log \otimes \F_p$, we get~$\sigma$, so we
        can check whether $\sigma$ is injective and we can determine
        the image of~$\PP\sigma$.
\end{itemize}
See also Section~\ref{S:Eff} below.


\section{Torsion points on a generic hyperelliptic curve}

Theorem~\ref{T:torsion on generic hyperelliptic curve},
which we hope is of independent interest,
shows that the only torsion points lying on
a generic hyperelliptic curve are its Weierstrass points.

\begin{theorem}
\label{T:torsion on generic hyperelliptic curve}
Let $C$ be a generic hyperelliptic curve of genus $g>1$ 
over a field $k$ of characteristic~$0$; 
i.e., the image of the corresponding morphism
from $\Spec k$ to the moduli space over $\Q$ is the generic point.
Assume that $C$ has a $k$-rational Weierstrass point,
which is used to embed $C$ in its Jacobian $J$.
Then $C(\kbar) \intersect J(\kbar)_{\tors}$ 
consists of only the Weierstrass points.
\end{theorem}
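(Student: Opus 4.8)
The plan is to use a specialization/monodromy argument: it suffices to exhibit a single hyperelliptic curve $C_0$ of genus $g$ over a number field (or over $\Qbar$) for which $C_0(\Qbar) \cap J_0(\Qbar)_{\tors}$ consists of exactly the Weierstrass points, and then conclude for the generic curve by a semicontinuity argument. Concretely, if the generic curve had a non-Weierstrass torsion point, then by spreading out over an open subset $U$ of the moduli space (or of the $\calF_g$-parameter space) there would be a multisection $T \to U$, finite over $U$, whose fibers are non-Weierstrass torsion points of bounded order $N$ on the fibers of the universal curve; but then every fiber over a $\Qbar$-point of $U$ would contain such a point, contradicting the existence of $C_0$. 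So the real content is constructing $C_0$ and verifying it has no unexpected torsion points — and the natural way to do \emph{that} is again a degeneration/reduction argument, so in practice one proves the statement directly by reduction mod $p$.

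The main tool I would use is reduction modulo a prime of good reduction. Fix a prime $p$ (odd, say) and choose $C_0$ with good reduction at $\pp \mid p$; let $\calC_0$ be the smooth model and $\bar C_0$ its special fiber over $\F_q$. Torsion points of order prime to $p$ inject into $\bar C_0(\Fbar_q) \cap \bar J_0(\Fbar_q)_{\tors}$ under reduction, and $p$-power torsion can be controlled separately because the formal group near a point kills it, or handled by choosing the model carefully. The upshot: if $P \in C_0(\Qbar)$ is torsion of order $N$, then $\bar P$ is a torsion point on $\bar C_0$ of order dividing $N$, and for varying $p$ one gets strong constraints. A cleaner route, which I expect the authors take: combine two such specializations at distinct primes, or use a single specialization to a curve over $\Fbar_p$ that is ``as generic as possible,'' where one can appeal to a Manin--Mumford type statement or to an explicit count. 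The key point is that the locus of $(C, P)$ with $P$ a non-Weierstrass torsion point of order $\le N$ on $C$ is a proper closed subvariety of (a level cover of) the moduli space; showing it is not everything requires producing one curve avoiding it for every $N$ simultaneously, which is exactly what a good-reduction argument delivers since the possible orders $N$ of reduced torsion points on a fixed $\bar C_0/\F_q$ are bounded (by $\#\bar J_0(\F_{q^m})$ for the relevant $m$, hence by Weil bounds), while on the generic curve we are free to let the characteristic vary.

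The main obstacle, I expect, is handling the $p$-power torsion: reduction is injective only on prime-to-$p$ torsion, so a purely mod-$p$ argument leaves open the possibility of a non-Weierstrass point of $p$-power order on the generic curve. The fix is to run the argument at infinitely many primes $p$ simultaneously — a non-Weierstrass torsion point on the generic curve would have some fixed order $N$ after spreading out, and then choosing $p \nmid N$ with good reduction gives the contradiction. Making ``the generic curve'' precise here (it is the curve over the function field $k(\calF_g)$ or over the residue field of the generic point of the moduli stack) and checking that the bad locus is genuinely closed of positive codimension — i.e., that for \emph{each} $N$ there really is a curve with no non-Weierstrass $N$-torsion — is the technical heart, and it is cleanly resolved by the good-reduction construction because the set of orders realized on the special fiber is finite. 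A secondary subtlety is that one wants the Weierstrass point used for the Abel--Jacobi embedding to itself reduce to a Weierstrass point of the special fiber, which holds automatically for a curve with good reduction given by a monic separable polynomial of degree $2g+1$ with $p$ odd.
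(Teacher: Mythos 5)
Your setup — reduce the statement for the generic curve to exhibiting a single $C_0$ with no non‑Weierstrass torsion, via spreading out — is logically sound as a framework, but the good‑reduction argument you propose to produce such a $C_0$ has a genuine gap: after reducing modulo a prime of good reduction, \emph{every} point of $\bar C_0(\Fbar_p)$ is a torsion point in $\bar J_0(\Fbar_p)$, because a Jacobian over $\Fbar_p$ is entirely torsion. So if $P$ is a $\Qbar$‑torsion point of order $N$ prime to $p$, reduction gives you a point $\bar P$ of order $N$ on $\bar C_0$ — and there is no contradiction, since $\bar C_0(\Fbar_{p})$ contains points of arbitrarily large orders as one lets the residue field grow. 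The ``bound from $\#\bar J_0(\F_{q^m})$'' you invoke is vacuous for the same reason: $m$ grows with $N$, and every order is realized. Using two primes, or a ``Manin--Mumford type statement'' over $\Fbar_p$, does not rescue this: you would then need precisely the theorem you are trying to prove (that some special $C_0$ has only the Weierstrass points as torsion points of its Jacobian), which is not a consequence of good reduction and is not supplied by Manin--Mumford (a finiteness statement, not an identification of the set). In short, the route through a \emph{single} nice curve $C_0$ pushes all the difficulty into verifying a Manin--Mumford‑with‑explicit‑list statement for $C_0$, which is at least as hard as the original problem.

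The paper does something quite different and does not specialize to finite characteristic at all. It works directly over the function field $k=\Q(a_1,\dots,a_{2g+1})$ and uses the ``big monodromy'' theorem of A'Campo: the geometric monodromy group contains $\ker(\Sp_{2g}(\Z)\to\Sp_{2g}(\Z/2))$, hence by strong approximation acts very transitively on $J[n]$ away from the prime~$2$. If $P$ is a non‑Weierstrass torsion point of exact order $n=2^e m$ with $m\ge 3$ (resp.\ $e\ge 3$), one can pick Galois conjugates $R_1,R_2,R_3$ of $P$, produced by the monodromy action, so that $P+R_3-R_1-R_2$ is a \emph{principal} divisor supported on four distinct points. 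A degree‑$2$ rational function on a hyperelliptic curve has divisor fixed by the hyperelliptic involution, which is impossible for that divisor; this forces $n\le 4$. Then $nP-n\infty$ is the divisor of some $h\in\kbar[x]+\kbar[x]y$, and a pole‑order computation at $\infty$ (using $v(x)=-2$, $v(y)=-(2g+1)<-4$) shows $h\in\kbar[x]$, hence $h$ is fixed by the involution and so is $P$, i.e., $P$ is a Weierstrass point. This is the step your sketch is missing: a mechanism that actually rules out low‑degree principal relations among conjugate torsion points, which big monodromy plus the hyperellipticity of $C$ delivers and a good‑reduction argument does not.
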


\begin{remark}
See \cite{Chiodo-Eisenbud-Farkas-Schreyer2013}*{Theorem~2.3}
for a related result concerning torsion points on the theta divisor
of a generic hyperelliptic curve.
\end{remark}

Before giving the proof in detail, let us explain the strategy.
If $P$ is a torsion point of order $n$ on $C$,
then so are all its Galois conjugates.
Because of ``big monodromy'', there are many such Galois conjugates.
Taking combinations of these yields a principal divisor
associated to a rational function of low degree on $C$.
Such functions are fixed by the hyperelliptic involution,
and this will force $P$ to be a Weierstrass point.

\begin{proof}
We may assume that $C$ is the curve $y^2=\prod_{i=1}^{2g+1}(x-a_i)$ over 
$k \colonequals \Q(a_1,\ldots,a_{2g+1})$, 
where the $a_i$ are indeterminates,
and that $C$ is embedded in $J$ using the Weierstrass point~$\infty$.
By \cite{ACampo1979}*{Theorem~1}, 
the geometric monodromy group contains 
$\ker(\Sp_{2g}(\Z) \to \Sp_{2g}(\Z/2\Z))$.
For $n \ge 1$, let $I_n$ be the image of 
the geometric monodromy group in $\GL_{2g}(\Z/n\Z)$.
Suppose that $n=2^e m$ where $e \in \Z_{\ge 0}$ and $m$ is odd.
Strong approximation \cite{Kneser1965c}*{Satz~2}
for $\Sp_{2g}(\Z)$ shows that $I_n = I_{2^e} \times I_m$,
where $I_{2^e}$ contains diagonal matrices mapping the
first standard basis element of $(\Z/2^e\Z)^{2g}$
to any odd multiple, and $I_m$ contains $\Sp_{2g}(\Z/m\Z)$,
which acts transitively on points of exact order $m$.

Now suppose that $P \in C(\kbar)$ is a torsion point of exact order $n$.
Then $P$ is the first vector in a $\Z/n\Z$-basis of $J[n]$,
say $P,Q,\ldots$.
Decompose $P$ as $P_{2^e}+P_m$ where $P_{2^e} \in J[2^e]$ and $P_m \in J[m]$.
Decompose $Q$ similarly as $Q_{2^e} + Q_m$.
Then we may find elements of $I_n$ mapping $P = P_{2^e} + P_m$
to $R_1 \colonequals P_{2^e} + Q_m$,
$R_2 \colonequals P_{2^e} + 2P_m$,
and $R_3 \colonequals P_{2^e} + (P_m+Q_m)$.
Then $P + R_3 - R_1 - R_2$, viewed as a divisor on $C_{\kbar}$, is principal.
If $m \ge 3$, then $P,R_1,R_2,R_3$ are all distinct,
so the corresponding rational function is of degree~$2$,
which implies that its divisor is fixed by the hyperelliptic involution,
a contradiction.
Thus $m=1$, so $n=2^e$.
If $e \ge 3$, then we may find elements of $I_n$ mapping $P$
to $S_1 \colonequals (2^{e-2}+1)P$,
$S_2 \colonequals (2 \cdot 2^{e-2}+1)P$,
and $S_3 \colonequals (3 \cdot 2^{e-2}+1)P$,
and then the divisor $P+S_1-S_2-S_3$ yields a contradiction as before.
Thus $e \le 2$, so $n \le 4$.

Now $nP-n\infty$ is the divisor of some $h \in \kbar[x] + \kbar[x] y$.
The valuation $v$ at $\infty$
satisfies $v(x)=-2$ and $v(y)=-(2g+1) < -4$,
so $h \in \kbar[x]$.
Thus the hyperelliptic involution fixes $h$,
so it fixes $nP-n\infty$,
so it fixes $P$.
In other words, $P$ is a Weierstrass point.
\end{proof}

\begin{remark}
Our application to $2$-adic Chabauty 
needs to consider only torsion points $P$ of odd order~$n$
in Theorem~\ref{T:torsion on generic hyperelliptic curve}.
There is a simpler proof in this special case:
the point $Q \colonequals 2P$ is a Galois conjugate of $P$,
and the divisor $Q + \infty - 2P$ is principal.
\end{remark}


\section{Families of curves and Jacobians}
\label{S:families}

\subsection{Abelian scheme over a \texorpdfstring{$p$}{p}-adic variety}

Let $M$ be a smooth variety over $\Q_p$.
Let $\JJ \to M$ be an abelian scheme.
Given $m \in M(\Q_p)$, let $\JJ_m$ be the fiber,
an abelian variety over $\Q_p$.
If $\Omega^1_{\JJ/M}$ is a free $\OO_{\!\JJ}$-module
with basis $\omega_1,\ldots,\omega_g$,
then fiberwise integration of these $1$-forms defines 
a local diffeomorphism $\Log \colon \JJ(\Q_p) \to \Q_p^g \times M(\Q_p)$
that is fiberwise the homomorphism $\log$ of Section~\ref{S:logarithm}.

\begin{definition}
For an open subset $U \subseteq M(\Q_p)$,
let $\JJ(\Q_p)_U$ be its inverse image in $\JJ(\Q_p)$.
By a \defi{trivialization} of $\JJ(\Q_p) \to M(\Q_p)$ above $U$,
we mean a diffeomorphism
\[
	\tau \colon \JJ(\Q_p)_U \to (\Z_p^g \times F) \times U
\]
over $U$, for some finite abelian group $F$, such that 
\begin{itemize}
\item for each $m \in U$, 
the fiber $\tau_m \colon \JJ_m(\Q_p) \to \Z_p^g \times F$ 
is an isomorphism of $p$-adic Lie groups, and
\item after replacing $M$ by a Zariski open subset
whose set of $\Q_p$-points still contains~$U$,
the composition of $\tau$ with the projection to $\Z_p^g \times U$
agrees with a map $\Log$ defined as above.
\end{itemize}
\end{definition}

\begin{proposition}
\label{P:trivialized}
The base $M(\Q_p)$ can be covered by open subsets $U$ 
above which $\JJ(\Q_p) \to M(\Q_p)$ can be trivialized.
\end{proposition}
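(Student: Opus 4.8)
The plan is to work locally on $M(\Q_p)$ and patch. First I would fix a point $m_0 \in M(\Q_p)$ and produce a trivialization above a small neighborhood of $m_0$; since $M(\Q_p)$ can then be covered by such neighborhoods, this suffices. The starting observation is that $\Omega^1_{\JJ/M}$ is locally free of rank $g$, so after shrinking $M$ to a Zariski-open neighborhood of the image of $m_0$ we may assume it is free with a basis $\omega_1,\ldots,\omega_g$; this gives the map $\Log \colon \JJ(\Q_p) \to \Q_p^g \times M(\Q_p)$ of the preamble, which is fiberwise the logarithm of Section~\ref{S:logarithm} and hence a local diffeomorphism.

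Next I would analyze the kernel of $\Log$, namely the torsion. Since $\JJ \to M$ is an abelian scheme, the $N$-torsion subscheme $\JJ[N]$ is finite over $M$, so after replacing $M$ by a small enough $p$-adic neighborhood $U_0$ of $m_0$ (using that finite morphisms over a Henselian-type base, or just compactness plus continuity, give locally constant fiber counts) one arranges that $\JJ[N](\Q_p)_{U_0} \to U_0$ is a disjoint union of sections — in other words the group $\JJ_m(\Q_p)_{\tors}$ is ``constant'' equal to some finite abelian group $F$ as $m$ ranges over $U_0$, and these torsion sections are locally constant. One must bound the torsion uniformly first: the prime-to-$p$ part injects into $\JJ_m(\F_p)$-type reductions (or use that over a $p$-adic field it is bounded by a constant depending on $g$ and $p$), and the $p$-part is bounded since $\log$ is a local diffeomorphism; so a single $N$ works on a neighborhood.

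Then I would build the diffeomorphism. On $U_0$, pick the torsion sections to identify $\JJ(\Q_p)_{U_0, \tors}$ with $F \times U_0$. The map $\Log$ sends each fiber $\JJ_m(\Q_p)$ onto $\Z_p^g$ (for the chosen basis, possibly after a $\GL_g(\Z_p)$ change that is locally constant in $m$, which one absorbs by shrinking $U_0$ further so the image lattice is literally $\Z_p^g$). Because any commutative extension of $\Z_p^g$ by a finite abelian group splits topologically, each $\JJ_m(\Q_p) \isom \Z_p^g \times F$; doing this compatibly in the family, using the torsion sections for the $F$-coordinate and $\Log$ for the $\Z_p^g$-coordinate, yields the desired $\tau \colon \JJ(\Q_p)_{U_0} \to (\Z_p^g \times F) \times U_0$ over $U_0$. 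The two bulleted conditions hold by construction: fiberwise $\tau_m$ is a $p$-adic Lie group isomorphism, and the composite with the projection to $\Z_p^g \times U_0$ is $\Log$ (after the Zariski-shrinking already performed). Finally, let $U$ range over such neighborhoods $U_0$ of points $m_0 \in M(\Q_p)$; these cover $M(\Q_p)$, proving the proposition.

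The main obstacle is the uniformity in the family: ensuring that a single finite group $F$ and a single trivializing splitting work on an honest open neighborhood, rather than just pointwise. Concretely this means (i) bounding $\#\JJ_m(\Q_p)_{\tors}$ uniformly near $m_0$, (ii) showing the torsion sections are locally constant (continuity of roots of the relevant finite $M$-scheme over the $p$-adic base), and (iii) checking that the image lattice $\Log(\JJ_m(\Q_p)) \subseteq \Q_p^g$ is locally constant, so that a fixed choice of basis makes it equal to $\Z_p^g$ after shrinking. Each of these is a standard $p$-adic continuity/compactness argument — the finiteness of $\JJ[N]$ over $M$ and the fact that $\Log$ is a local diffeomorphism do all the real work — but assembling them into the single open set $U$ is where the care is needed.
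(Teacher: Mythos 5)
Your overall strategy matches the paper's: trivialize $\Omega^1_{\JJ/M}$ Zariski-locally to get $\Log$, observe it is a fiberwise local diffeomorphism, split each fiber as $\Z_p^g \times F$, and argue that everything can be made constant on a small $p$-adic neighborhood of $m_0$. You also correctly isolate the three uniformity issues (i)--(iii). But the justifications you give for them are where the real content lies, and two of them are wrong or missing. For (i), the claim that $\#\JJ_m(\Q_p)_{\tors}$ ``is bounded by a constant depending on $g$ and $p$'' is false in general (Tate curves already give unbounded torsion over $\Q_p$), and ``the prime-to-$p$ part injects into $\JJ_m(\F_p)$-type reductions'' has no meaning without a good model. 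There is also a circularity in your plan: you want to invoke finiteness of $\JJ[N]$ over $M$, but you need to know $N$ before you can shrink the neighborhood. The paper sidesteps all of this by observing that the torsion locus $\Log^{-1}(\{0\}\times M(\Q_p))$ is proper over $M(\Q_p)$ (since $\JJ(\Q_p)\to M(\Q_p)$ is proper) \emph{and} a local diffeomorphism onto $M(\Q_p)$ (since $\Log$ is), hence a finite covering space, hence locally constant in $m$. Similarly, for (iii) you flag that the image lattice $\Log(\JJ_m(\Q_p))$ must be locally constant but call it ``standard''; the paper actually proves it, using that $\JJ(\Q_p)\to M(\Q_p)$ is proper \emph{and open} so that for each compact open $H\le\Q_p^g$ the loci $\{m:\Log(\JJ_m(\Q_p))\subseteq H\}$ and $\{m:\Log(\JJ_m(\Q_p))= H\}$ are open and closed.

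The most serious gap is in assembling the trivialization. You say to ``use the torsion sections for the $F$-coordinate and $\Log$ for the $\Z_p^g$-coordinate,'' but $\Log$ goes the wrong way: it is surjective with kernel exactly the torsion, so it does not give you a map $\Z_p^g\times U\to\JJ(\Q_p)_U$. What the paper does is use the local-diffeomorphism property of $\Log$ to produce, after shrinking $U$, analytic \emph{sections} $s_1,\dots,s_g$ of $\JJ(\Q_p)_U\to U$ with $\Log(s_i)=\{e_i\}\times U$, and then (since $s_i$ may have prime-to-$p$ torsion components that $\Z_p$ cannot act on) projects each $s_i$ onto the canonical $\Z_p$-module factor of $\JJ_m(\Q_p)$ fiberwise so that $e_i\mapsto s_i$ defines a genuine fiberwise $\Z_p$-module homomorphism $\phi\colon\Z_p^g\times U\to\JJ(\Q_p)_U$. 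Taking the product with the torsion homomorphism $\psi\colon F\times U\to\JJ(\Q_p)_U$ and inverting gives $\tau$. Invoking the abstract splitting lemma fiberwise, as you do, does not by itself produce a splitting that varies continuously in $m$; this is precisely the family-compatibility you correctly identify as ``where the care is needed'' but then gloss over.
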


\begin{proof}
Since $\JJ \to M$ is an abelian scheme,
$\Omega^1_{\JJ/M}$ is trivialized on 
the preimages of some Zariski open sets covering $M$.
Thus we may reduce to the case that $\Omega^1_{\JJ/M}$ is free.
Choose a basis to define $\Log$.

Let $m_0 \in M(\Q_p)$.
Change the basis above so that $\Log(\JJ_{m_0}(\Q_p)) = \Z_p^g$.
Since $\JJ(\Q_p) \to M(\Q_p)$ is a proper and open map,
for each compact open subgroup $H \le \Q_p^g$,
the locus of $m \in M(\Q_p)$
such that $\Log(\JJ_m(\Q_p)) \subseteq H$
is open and closed.
The same is true for the locus where $\Log(\JJ_m(\Q_p))$ \emph{equals} $H$,
because a subgroup of $\Q_p^g$ equals $H$
if and only if it is contained in $H$
and not contained in any of the finitely many maximal subgroups of $H$.
In particular, we can find an open neighborhood $U$ of $m_0$ in $M(\Q_p)$
such that $\Log(\JJ(\Q_p)_U) = \Z_p^g \times U$.
Since $\Log$ is a local diffeomorphism, 
after shrinking $U$, 
we can find analytic sections $s_1,\ldots,s_g$ of $\JJ(\Q_p)_U \to U$
such that $\Log\bigl(s_i(U)\bigr) = \{e_i\} \times U$ for each $i$, 
with $e_i$ the standard basis vector.
Each group $\JJ_m(\Q_p)$ factors canonically as 
a finitely generated $\Z_p$-module
and a prime-to-$p$ finite group;
if we replace each $s_i$ by its projection onto the $\Z_p$-module, fiberwise,
then we may define a fiberwise $\Z_p$-module homomorphism
$\phi \colon \Z_p^g \times U \to \JJ(\Q_p)_U$
sending $e_i$ to $s_i$.

The map from the torsion locus
$\JJ(\Q_p)_{\tors} = \Log^{-1}(\{0\} \times M(\Q_p))$ to $M(\Q_p)$
is proper (since $\JJ(\Q_p) \to M(\Q_p)$ is proper)
and a local diffeomorphism (since $\Log$ is),
so it is a locally constant family of finite abelian groups over $M(\Q_p)$.
Shrink $U$ so that this family is constant, say equal to $F$, above $U$.
Thus we obtain a fiberwise homomorphism 
$\psi \colon F \times U \to \JJ(\Q_p)_U$.
The product of $\phi$ and $\psi$ (over $U$)
is a fiberwise isomorphism $(\Z_p^g \times F) \times U \to \JJ(\Q_p)_U$
whose inverse is a trivialization $\tau$ above $U$.
\end{proof}

\subsection{The universal family of hyperelliptic curves}

Recall from Section~\ref{S:notation}
the notation $\calF_g(R)$ for the set of all nice hyperelliptic
curves of odd degree and genus~$g$ 
in standard form with coefficients in~$R$.
Let $\calM$ be the moduli space over $\Q$ 
such that $\calM(k)=\calF_g(k)$ for each field extension $k/\Q$;
more precisely, 
$\calM$ is the complement of the discriminant locus $\Delta=0$ in $\Aff^{2g+1}$.
Then $\calF_g(\Z_p) = \Z_p^{2g+1} \intersect \calM(\Q_p)$.
Endow $\calF_g(\Z_p)$ with the Haar measure from $\Z_p^{2g+1}$.
By a \defi{congruence class} in $\calF_g(\Z_p)$,
we mean a coset $U$ of $(p^e \Z_p)^{2g+1}$ in $\Z_p^{2g+1}$
with $U \subseteq \calF_g(\Z_p)$;
for such $U$, the density of $\calF_g \intersect U$ equals
the measure of $U$.

Let $\pi \colon \CC \to \calM$ be the universal curve;
$\pi$ is a smooth proper morphism whose fibers
are nice hyperelliptic curves of genus~$g$.
Its relative Jacobian is an abelian scheme $\JJ \to \calM$ 
\cite{Bosch-Lutkebohmert-Raynaud1990}*{p.~260, Proposition~4}.
Call a congruence class $U \subseteq \calF_g(\Z_p)$ \defi{trivializing}
if $\JJ(\Q_p) \to \calM(\Q_p)$ is trivialized above $U$.

\begin{lemma}
\label{L:disjoint union}
Any open subset of $\calF_g(\Z_p)$
is a disjoint union of trivializing congruence classes.
\end{lemma}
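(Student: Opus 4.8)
The plan is to combine the two facts already established: first, that $\calM(\Q_p)$ (hence its open subset $\calF_g(\Z_p)$) can be covered by open sets above which $\JJ(\Q_p)\to\calM(\Q_p)$ trivializes (Proposition~\ref{P:trivialized}), and second, that the congruence classes $U=a+(p^e\Z_p)^{2g+1}$ contained in $\calF_g(\Z_p)$, over all $e\ge 0$ and all admissible cosets $a$, form a basis of the topology of $\calF_g(\Z_p)$ that is moreover a \emph{nested} system: any two such classes are either disjoint or one contains the other. So first I would fix an arbitrary open $W\subseteq\calF_g(\Z_p)$, and observe that by Proposition~\ref{P:trivialized} every point $m\in W$ has an open neighborhood above which $\JJ(\Q_p)$ trivializes; intersecting with $W$ and then shrinking to a basic congruence class, each $m\in W$ lies in some congruence class $U_m\subseteq W$ that is trivializing (trivializing is inherited by sub-congruence-classes, since a trivialization restricts). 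Thus $W=\bigcup_{m\in W}U_m$ is a union of trivializing congruence classes.

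The remaining point is to pass from a \emph{union} to a \emph{disjoint} union. Here I would use compactness: $\calF_g(\Z_p)$ is an open subset of the compact space $\Z_p^{2g+1}$, but it is not compact in general, so instead I would argue level by level. For each $e$, the level-$e$ congruence classes partition $\Z_p^{2g+1}$ into finitely many clopen pieces; intersecting with the cover $\{U_m\}$, each point of $W$ is eventually (for $e$ large) contained in a level-$e$ class lying inside some $U_m\subseteq W$. Concretely, let $V$ be the collection of all trivializing congruence classes $U\subseteq W$ that are \emph{maximal} with this property, i.e.\ not properly contained in any strictly larger trivializing congruence class inside $W$. Because congruence classes are nested and every congruence class has only finitely many congruence classes containing it (those at lower level $0,1,\dots,e$), every trivializing $U\subseteq W$ is contained in a unique maximal one; in particular the maximal ones are pairwise disjoint, and their union is all of $W$ by the previous paragraph. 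Hence $W=\bigsqcup_{U\in V}U$ is the desired disjoint union of trivializing congruence classes.

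The one step requiring a little care is the assertion that \emph{trivializing} is inherited by sub-congruence-classes and that a maximal trivializing class exists: the inheritance is immediate since a trivialization $\tau\colon\JJ(\Q_p)_U\to(\Z_p^g\times F)\times U$ restricts over any smaller $U'\subseteq U$ (the two bullet conditions in the definition of trivialization are preserved, as the Zariski open subset of $\calM$ in the second bullet still contains $U'$), and the existence of maximal ones follows from the finite-height nesting just noted — there is no infinite ascending chain of congruence classes. I do not expect a genuine obstacle here; the content of the lemma is really just Proposition~\ref{P:trivialized} together with the combinatorial fact that a $p$-adic ball decomposes into a disjoint union of balls on which any given open cover becomes subordinate.
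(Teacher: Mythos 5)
Your proof is correct and follows essentially the same route as the paper's: write the open set as a union of trivializing congruence classes (using Proposition~\ref{P:trivialized} and the fact that congruence classes form a basis, with trivialization stable under restriction), then observe that the nestedness of $p$-adic balls lets one pass to the maximal such classes, which are pairwise disjoint. The paper states this more tersely (``$V$ is the disjoint union of the $U_i$ not contained in a larger one''); your additional remarks on why maximal classes exist and why trivialization is inherited by subclasses are exactly the implicit checks the paper elides.
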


\begin{proof}
The congruence classes in $\calF_g(\Z_p)$ form a basis for its topology.
By Proposition~\ref{P:trivialized},
the same is true for the trivializing congruence classes.
Thus any open subset $V$ of $\calF_g(\Z_p)$ is a union of
trivializing congruence classes $U_i$.
If two congruence classes meet, then one contains the other.
Thus $V$ is the disjoint union of the $U_i$ not contained in a larger one. 
\end{proof}

\begin{proposition}
\label{P:torsion on curves over Q}
The density of $C \in \calF_g$ such that $J(\Q)_{\tors} \ne 0$ is zero.
\end{proposition}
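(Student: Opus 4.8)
The plan is to show that curves with a nontrivial rational torsion point on the Jacobian form a density-zero subset of $\calF_g$, by comparing with the generic behavior established in Theorem~\ref{T:torsion on generic hyperelliptic curve}. The cleanest route is to stratify by the order $n$ of a torsion point and to show that for each fixed $n$, the locus of curves admitting a $\Q$-rational $n$-torsion point on $J$ is contained in a proper Zariski-closed subscheme of $\calM$, hence has density zero; then one sums (or takes a union bound) over $n$. But summing over all $n$ is dangerous for a density statement, since density is not countably additive, so I would instead argue more carefully.

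\textbf{Key steps.} First, reduce to bounded order: if $C \in \calF_g$ and $P \in J(\Q)_{\tors}$ is nonzero, then $P$ reduces injectively into $J(\F_\ell)$ for any prime $\ell$ of good reduction, and since $\#J(\F_\ell) \le (1+\sqrt{\ell})^{2g}$ is bounded once $\ell$ is fixed, choosing (say) $\ell = 3$ or any small prime and restricting to the positive-density set of $C$ with good reduction at $\ell$ shows that the order of $P$ is bounded by an explicit constant $B = B(g)$ on that subset. (The complement—curves with bad reduction at a fixed prime—has density tending to zero, or at any rate can be handled by using two auxiliary primes and noting that almost all curves have good reduction at at least one of them.) So it suffices to bound the density of curves with a rational torsion point of order $n \le B$, a finite union of conditions. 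Second, for each fixed $n$, consider the closed subscheme $Z_n \subseteq \calM$ parametrizing pairs $(C, P)$ with $P \in \calC[n]$ a section fixed by Galois and not a Weierstrass point, pushed forward to $\calM$; by Theorem~\ref{T:torsion on generic hyperelliptic curve}, no such $P$ exists over the generic point of $\calM$, so $Z_n$ is not all of $\calM$, i.e.\ it is contained in the vanishing locus of some nonzero $F_n \in \Z[a_1,\ldots,a_{2g+1}]$. Third, invoke a standard density estimate: the density of $C \in \calF_g$ with $F_n(a_1,\ldots,a_{2g+1}) = 0$ is zero (a nonzero polynomial condition cuts out a density-zero set with respect to the height $H$; this is elementary, essentially a Schwartz--Zippel/lattice-point count with the weights $|a_i|^{1/i}$). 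Finally, combine: the curves with nonzero rational torsion (and good reduction at the chosen auxiliary prime) lie in $\bigcup_{n \le B} \{F_n = 0\}$, a finite union of density-zero sets, hence density zero; adding back the density-zero (or controllably small) set of curves with bad reduction at all chosen auxiliary primes gives the result. Actually, since the Weierstrass points are never rational points of $J$ other than via the embedding divisor classes—more precisely the relevant torsion points on $C$ other than $\infty$ are exactly the non-infinity Weierstrass points, which contribute $2$-torsion in $J$—one must be slightly careful: the statement is about $J(\Q)_{\tors} \ne 0$, so one should note that a Weierstrass point $w \ne \infty$ gives the class $[w - \infty] \in J(\Q)[2]$, and the locus where such a $w$ is $\Q$-rational is exactly the locus where $f$ has a rational root, which again is density zero (indeed the generic $f$ is irreducible). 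So the Weierstrass contribution is handled the same way.

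\textbf{Main obstacle.} The subtle point is the passage from Theorem~\ref{T:torsion on generic hyperelliptic curve}, which is a statement over the generic point of $\calM$ about \emph{geometric} torsion points on $C$, to a statement bounding a \emph{Zariski-closed} locus in $\calM$ over which there is a \emph{Galois-stable} (rational) torsion point on $J$. One has to set up the moduli of pairs $(C,P)$ with $P \in \JJ[n]$ correctly—this is a finite étale cover $\JJ[n] \to \calM$ away from bad primes, or rather the scheme of $n$-torsion sections—and observe that a $\Q$-rational torsion point of order $n$ on $J_C$ for a $\Q$-point $C$ of $\calM$ corresponds to a $\Q$-point of this cover lying over $C$; the condition "$P$ lies on $C$" (i.e.\ $P$ is in the image of the Abel--Jacobi embedding) is then a further closed condition, and Theorem~\ref{T:torsion on generic hyperelliptic curve} says precisely that the intersection of "$P \in C$" with "$P$ not Weierstrass" is empty over the generic point. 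Pinning down that this intersection is a genuine closed subscheme not dominating $\calM$, uniformly handling the finitely many $n \le B$, and then invoking the elementary height-density estimate for a nonzero polynomial vanishing—these are the steps requiring care, but none is deep given the theorem already in hand. The boundedness-of-torsion-order input (step one) is where one leans on reduction mod a small prime and on the positive density of good reduction, both already in the paper's toolkit.
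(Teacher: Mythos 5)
Your proposal takes a different path from the paper's and, as written, has two genuine gaps.

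\emph{First gap: the role of Theorem~\ref{T:torsion on generic hyperelliptic curve}.} That theorem controls the set $C(\kbar) \intersect J(\kbar)_{\tors}$, i.e., it tells you which points \emph{of the curve} are torsion in $J$. The proposition you are proving is about $J(\Q)_{\tors}$, and a nonzero rational torsion point of $J$ has no reason whatsoever to lie on $C \subset J$: it is a point on a $g$-dimensional abelian variety, whereas $C$ is a curve in it. So knowing that the generic curve has only Weierstrass points as torsion says nothing, by itself, about whether the generic $J$ has nonzero rational torsion. Your closed subscheme $Z_n$ ``parametrizing $(C,P)$ with $P\in\calC[n]$'' and the further condition ``$P$ lies on $C$'' are therefore the wrong objects to consider; imposing them proves a different (weaker) statement. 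What you actually need is the bare ``big monodromy'' input: by A'Campo's theorem (quoted and refined in the proof of Theorem~\ref{T:torsion on generic hyperelliptic curve}), the geometric monodromy on $J[n]$ fixes no nonzero vector, so $\JJ[n]\setminus\{0\} \to \calM$ is a finite \'etale cover with no section over the generic point. This, not Theorem~\ref{T:torsion on generic hyperelliptic curve}, is the right generic fact.

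\emph{Second gap: Schwartz--Zippel versus Hilbert irreducibility.} Even after fixing the first gap, the set of $m \in \calM(\Q)$ such that $\JJ_m[n]$ has a nonzero $\Q$-point is a \emph{thin set} in Serre's sense (a finite union of images of $\Q$-points of degree~$\ge 2$ covers), and a thin set is generally \emph{not} contained in the zero locus of a single nonzero polynomial $F_n$. For example, the set of perfect squares in $\Q$ is thin but Zariski-dense in $\Aff^1$. So the ``elementary height-density estimate for a nonzero polynomial vanishing'' does not apply. The right tool is a quantitative form of the Hilbert irreducibility theorem, which gives density zero for thin sets; this is exactly what the paper invokes.

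Your step one (bounding the torsion order by reduction modulo a small auxiliary prime of good reduction, and shrinking the exceptional set by allowing several auxiliary primes) is sound, and is a legitimate alternative to the paper's route, which instead restricts to a trivializing congruence class $U$ on which $\#J(\Q_p)_{\tors}$ is a constant $n$ and notes that $J(\Q)_{\tors}$ injects into $J(\Q_p)_{\tors}$. But steps two and three need to be replaced by ``big monodromy implies no generic section of $\JJ[n]\setminus\{0\}$'' followed by Hilbert irreducibility, as in the paper.
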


\begin{proof}
By Lemma~\ref{L:disjoint union}, 
there is a finite disjoint union $\calU$ of trivializing congruence classes $U$
such that the measure of $\calU$,
or equivalently the density of $\calF_g \intersect \calU$,
is as close as desired to~$1$.
Thus it suffices to prove the result for the $C \in \calF_g$ 
belonging to one trivializing congruence class~$U$.
For such $C$, the size of $J(\Q_p)_{\tors}$ is constant, say $n$.
The monodromy action does not fix any nonzero torsion point
on the geometric generic fiber of $\JJ \to \calM$,
so the Hilbert irreducibility theorem 
shows that the density of such $C$ such that $J(\Q)$ has a nonzero
point of order dividing $n$ is zero.
\end{proof}

Using the section $\infty \colon \calM \to \CC$,
we identify $\CC$ with a closed subscheme of $\JJ$.
Let $\WW \injects \CC$ be the locus of Weierstrass points.
Then $\WW$ is Zariski open and closed in $\JJ[2]$.
Let $Z$ be the set of $m \in \calF_g(\Z_p)$
such that $\CC_m(\Q_p) \intersect \JJ_m(\Q_p)_{\tors}$
is \emph{not} contained in $\WW_m(\Q_p)$.

\begin{proposition}
\label{P:odd torsion on curves}
The set $Z$ is closed in $\calF_g(\Z_p)$ and is of measure zero.
\end{proposition}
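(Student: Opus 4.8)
The plan is to deduce both assertions---closedness and measure zero---from the geometric input Theorem~\ref{T:torsion on generic hyperelliptic curve} together with the family-theoretic machinery already set up. For closedness, I would first show that the ``bad'' locus $Z$ is cut out by a proper (hence closed) map, after restricting to a trivializing congruence class. Indeed, by Lemma~\ref{L:disjoint union} it suffices to work inside a single trivializing congruence class $U$; there the torsion subscheme $\JJ(\Q_p)_{\tors}$ is, by the trivialization, a locally constant (in fact constant, after shrinking $U$) finite \'etale family, identified with $F \times U$ for a fixed finite abelian group $F$. Intersecting the section $\psi(F \times U) \subseteq \JJ(\Q_p)_U$ with the closed subscheme $\CC \subseteq \JJ$ gives a closed subset of $F \times U$, whose image in $U$ under the proper projection $F \times U \to U$ is closed; removing from this the contribution coming from $\WW$ (which is open and closed in $\JJ[2]$, hence its preimage is open and closed) still leaves a closed set. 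Since a finite disjoint union of closed sets is closed and the trivializing congruence classes partition $\calF_g(\Z_p)$ (as in Lemma~\ref{L:disjoint union}), $Z$ is closed in $\calF_g(\Z_p)$.

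For the measure-zero assertion, the idea is to bound $Z \cap U$ inside a lower-dimensional locus for each trivializing congruence class $U$, using Theorem~\ref{T:torsion on generic hyperelliptic curve} and Hilbert irreducibility, in the same spirit as the proof of Proposition~\ref{P:torsion on curves over Q}. For fixed $F$ as above and each nonzero element $t \in F$ not lying in $\WW$-positions, the locus of $m \in U$ for which the corresponding torsion section $\psi(t)$ meets $\CC_m$ is, by the argument above, a closed subscheme; I claim its $\Q_p$-points form a measure-zero set. This is because Theorem~\ref{T:torsion on generic hyperelliptic curve} says that on the geometric generic fiber no torsion point other than a Weierstrass point lies on the curve, so the corresponding algebraic locus in $\calM$ is a proper closed subscheme (not all of $\calM$); its $\Q_p$-points then have measure zero in $\calF_g(\Z_p)$ by a standard argument (a proper Zariski-closed subset of $\Aff^{2g+1}$ has $p$-adic measure zero). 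Summing over the finitely many $t \in F$ and over the finitely many choices of $F$ appearing among the trivializing congruence classes in a fixed large finite subunion $\calU$ with $\mu(\calF_g \setminus \calU)$ small shows $\mu(Z) = 0$.

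The main obstacle I anticipate is the passage from the analytic family $\JJ(\Q_p)_{\tors}$ over $U$ (via the trivialization) back to the algebraic picture on $\calM$, so that Theorem~\ref{T:torsion on generic hyperelliptic curve} and Hilbert irreducibility can be applied: one must check that the torsion section $\psi(t)$ coincides, over $U$, with (a $\Q_p$-point-wise restriction of) an honest algebraic torsion section defined over a finite cover of $\calM$, so that ``meeting $\CC_m$'' is an algebraic condition to which the geometric genericity statement applies. This is where the hypothesis on the trivialization---that $\Log \circ \tau^{-1}$ agrees with the algebraically defined $\Log$ after shrinking $M$---is essential; combined with the fact that the torsion sections are defined over the field generated by the $n$-torsion (a finite extension of the function field of $\calM$), one reduces the measure-zero count to Hilbert irreducibility exactly as in Proposition~\ref{P:torsion on curves over Q}, now using that the monodromy group moves every non-Weierstrass torsion point off the curve by Theorem~\ref{T:torsion on generic hyperelliptic curve}. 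Once this translation is in hand, the remaining steps are routine.
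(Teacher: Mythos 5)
Your approach is essentially the paper's: restrict to a trivializing congruence class $U$, observe that over $U$ the torsion locus is constant (of some fixed order $n$), and deduce closedness from properness and measure-zero from Theorem~\ref{T:torsion on generic hyperelliptic curve}. Two points are worth flagging. First, the ``obstacle'' you anticipate about translating the analytic torsion sections $\psi(t)$ into algebraic data dissolves immediately if you work directly with the closed subscheme $\JJ[n] \subseteq \JJ$: since $n$ is the common order of $\JJ_m(\Q_p)_{\tors}$ over $U$, the analytic torsion locus over $U$ is exactly $\JJ[n](\Q_p)_U$, and the paper simply considers the proper morphism $\pi_n \colon \CC \cap (\JJ[n]\setminus\WW) \to \calM$; this is cleaner than carrying the trivialization $\psi$ around. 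Second, your final paragraph invokes Hilbert irreducibility, which is not needed here. That input is what Proposition~\ref{P:torsion on curves over Q} uses to control $\Q$-rational torsion (a Galois condition over $\Q$), but Proposition~\ref{P:odd torsion on curves} is a $p$-adic statement: once you know (from Theorem~\ref{T:torsion on generic hyperelliptic curve}) that $\pi_n$ is not dominant, the image $Z'$ lies in a proper Zariski-closed subset of $\Aff^{2g+1}$, whose $\Q_p$-points automatically have Haar measure zero; no irreducibility theorem enters. Finally, a small imprecision: $\calF_g(\Z_p)$ is a \emph{countable}, not finite, disjoint union of trivializing congruence classes, but since these form an open (and closed) partition and closedness is local on an open cover, the conclusion that $Z$ is closed still follows once each $Z\cap U$ is closed in $U$.
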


\begin{proof}
By Lemma~\ref{L:disjoint union} applied to $\calF_g(\Z_p)$,
we may restrict attention to a trivializing congruence class $U$.
Let $n$ be the constant value of $\#\JJ_m(\Q_p)_{\tors}$ for $m \in U$.
Let $Z'$ be the image of the $\Q_p$-points
under the restriction
$\pi_n \colon \CC \intersect (\JJ[n]-\WW) \to \calM$ of $\pi$.
Then $Z \intersect U = Z' \intersect U$.
Since $\CC \to \calM$ is proper
and $\JJ[n] \setminus \WW \to \calM$ is finite \'etale,
the morphism $\pi_n$ is proper,
so $Z'$ is closed.
By Theorem~\ref{T:torsion on generic hyperelliptic curve},
$\pi_n$ is not dominant, 
so $Z'$ is of measure zero.
\end{proof}

\begin{corollary}
\label{C:odd torsion on curves}
The set of $C \in \calF_g$ 
such that $C(\Q_p) \intersect J(\Q_p)_{\tors}$ contains a non-Weierstrass point
is of density zero.
\end{corollary}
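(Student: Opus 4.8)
The plan is to deduce this density statement over $\calF_g = \calF_g(\Z)$ from the measure-zero statement over $\calF_g(\Z_p)$ proved in Proposition~\ref{P:odd torsion on curves}, using the compatibility between the $p$-adic Haar measure and the density $\mu$. Recall from Section~\ref{S:families} that, for a congruence class $U \subseteq \calF_g(\Z_p)$ (a coset of $(p^e\Z_p)^{2g+1}$ contained in $\calF_g(\Z_p)$), the density of $\calF_g \intersect U$ equals the Haar measure of $U$. So the strategy is: cover the bad set $Z$ of Proposition~\ref{P:odd torsion on curves} by finitely many congruence classes of total measure as small as we like, intersect with $\calF_g$, and conclude that the integral curves we care about lie in a set of upper density at most that small number.

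Concretely, first observe that the set of $C \in \calF_g$ named in the corollary is exactly $\calF_g \intersect Z$: a curve $C \in \calF_g$ has $C(\Q_p) \intersect J(\Q_p)_{\tors}$ containing a non-Weierstrass point precisely when its image point $m \in \calF_g(\Z_p)$ lies in $Z$ (the Weierstrass locus $\WW$ is defined over $\Q$, so ``non-Weierstrass $\Q_p$-point'' is the same condition whether we think of $C$ as defined over $\Q$ or view it $p$-adically). Next, fix $\eps > 0$. Since $Z$ is closed and of measure zero by Proposition~\ref{P:odd torsion on curves}, and $\calF_g(\Z_p)$ is compact with the congruence classes forming a basis of its topology, the complement $\calF_g(\Z_p) \setminus Z$ is open, hence a union of congruence classes; by compactness of $Z$ finitely many of these cover $Z^c$ up to a set of measure $< \eps$. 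Equivalently, there is a finite union $\calU$ of congruence classes with $Z \subseteq \calU$ and $\operatorname{measure}(\calU) < \eps$. Then
\[
	\mu\bigl(\calF_g \intersect Z\bigr) \le \mu\bigl(\calF_g \intersect \calU\bigr)
	= \sum_{U} \mu\bigl(\calF_g \intersect U\bigr) = \sum_U \operatorname{measure}(U) = \operatorname{measure}(\calU) < \eps,
\]
where the sum is over the finitely many (disjoint, after refining as in Lemma~\ref{L:disjoint union}) congruence classes making up $\calU$, and we used that density is finitely additive on disjoint congruence classes and agrees with Haar measure on each. Here $\mu$ denotes upper density if the limit is not yet known to exist. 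Since $\eps$ was arbitrary, the upper density of $\calF_g \intersect Z$ is $0$, so its density is $0$, which is the assertion.

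I do not expect a serious obstacle here: the corollary is essentially a formal consequence of Proposition~\ref{P:odd torsion on curves} together with the already-established dictionary between $p$-adic measure and density. The one point requiring a line of care is the identification $\{C \in \calF_g : \ldots\} = \calF_g \intersect Z$ — i.e.\ that containing a non-Weierstrass $\Q_p$-torsion point is detected purely by the reduction of $C$ into $\calF_g(\Z_p)$ — but this is immediate because $\CC$, $\JJ$, $\JJ[n]$, and $\WW$ are all defined over $\Q$ and base change commutes with passing to $\Q_p$-points. A second routine point is passing from ``cover the measure-zero closed set by small-measure open sets'' to ``cover by finitely many congruence classes of small total measure,'' which uses compactness of $Z$ and the fact that congruence classes form a basis; refining overlapping classes so the union is disjoint is handled exactly as in the proof of Lemma~\ref{L:disjoint union}.
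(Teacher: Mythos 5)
Your overall strategy --- reduce to the measure-zero statement of Proposition~\ref{P:odd torsion on curves} via the ``density of $\calF_g \cap U$ equals the measure of $U$'' dictionary together with Lemma~\ref{L:disjoint union} --- is correct and is exactly what the paper does. However, there is a genuine gap in the execution. You assert that $\calF_g(\Z_p)$ is compact and that $Z$ is compact, and you use this to claim that $Z$ can be covered by finitely many congruence classes inside $\calF_g(\Z_p)$ of total measure $<\eps$. Neither compactness claim holds (or is established): $\calF_g(\Z_p)$ is the complement of the discriminant hypersurface in $\Z_p^{2g+1}$, so it is open but not closed in $\Z_p^{2g+1}$ and hence not compact; and Proposition~\ref{P:odd torsion on curves} only gives that $Z$ is closed \emph{in this noncompact space}, so $Z$ could a priori accumulate along the discriminant locus, in which case no finite family of congruence classes contained in $\calF_g(\Z_p)$ with small total measure would cover it. (A one-variable analogue: a sequence in $\Z_p\setminus\{0\}$ tending to $0$ is closed and of measure zero in $\Z_p\setminus\{0\}$ but cannot be covered by finitely many cosets avoiding $0$ with small total measure.)

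The fix is small and is the route the paper takes: argue from the complement rather than from $Z$. By Lemma~\ref{L:disjoint union}, the open set $\calF_g(\Z_p)\setminus Z$ is a countable disjoint union of trivializing congruence classes $U_i$; since $Z$ has measure zero, $\sum_i \operatorname{measure}(U_i)=1$. Choose finitely many $U_1,\dots,U_n$ with $\sum_{i\le n}\operatorname{measure}(U_i)>1-\eps$. These are pairwise disjoint, contained in $Z^c$, and each satisfies $\mu(\calF_g\cap U_i)=\operatorname{measure}(U_i)$, so the lower density of $\{C\in\calF_g : C\notin Z\}$ is at least $1-\eps$, hence the upper density of $\calF_g\cap Z$ is at most $\eps$; let $\eps\to 0$. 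Your observation that the corollary's bad set is exactly $\calF_g\cap Z$ (since $\CC$, $\JJ$, and $\WW$ are defined over $\Q$) is correct and does not need further justification.
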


\begin{proof}
Apply Lemma~\ref{L:disjoint union} to $\calF_g(\Z_p) \setminus Z$
to show that the complement has density $1$.
\end{proof}

\begin{proposition}
\label{P:locally constant image of rholog}
Let $U$ be a trivializing congruence class.
Let $Z$ be as in Proposition~\ref{P:odd torsion on curves}.
Let $U' \colonequals U \setminus Z$.
Then $\rholog(\CC_m(\Q_p))$ in $\PP^{g-1}(\F_p)$
is locally constant as $m$ varies in $U'$.
\end{proposition}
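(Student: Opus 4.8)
I would fix $m_0 \in U'$ and produce a congruence class $U'' \ni m_0$ with $U'' \subseteq U'$ on which $\rholog(\CC_m(\Q_p))$ is constant, equal to $\rholog(\CC_{m_0}(\Q_p))$; since such classes form a basis for the topology, this proves local constancy. Such a $U''$ exists because $Z$ is closed (Proposition~\ref{P:odd torsion on curves}) and $m_0 \notin Z$; moreover $U''$ is compact, and it is again a trivializing congruence class, so we may shrink it finitely many times during the argument. The trivialization of $\JJ(\Q_p) \to \calM(\Q_p)$ above $U''$, composed with the closed immersion $\CC \injects \JJ$ coming from $\infty$ and with the projection to $\Z_p^g \times U''$, is fiberwise $P \mapsto (\log P, m)$; applying $\rho$ to the first coordinate recovers $\rholog$ fiberwise. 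Since $\CC \to \calM$ is smooth and proper of relative dimension~$1$, the map $\CC(\Q_p) \to \calM(\Q_p)$ is a proper submersion of $p$-adic manifolds with compact fibers; and since $\WW \injects \JJ[2]$ is finite \'etale over $\calM$, after shrinking $U''$ the $\Q_p$-rational Weierstrass points of $\CC_m$, for $m \in U''$, are exactly the images of finitely many analytic sections $s_1,\dots,s_r \colon U'' \to \CC(\Q_p)_{U''}$. Crucially, because $m \in U' = U \setminus Z$, these sections exhaust $\CC_m(\Q_p) \intersect \JJ_m(\Q_p)_{\tors}$, the locus on each fiber where $\rholog$ is undefined.

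Next I would record the behavior of $\rholog$ near a section $s_a$. Choose a relative analytic coordinate $t$ on a neighborhood of $s_a(m_0)$ in $\CC(\Q_p)_{U''}$ with $s_a$ at $t = \phi_a(m) \in p\Z_p$, where $\phi_a$ depends $p$-adically continuously on $m$. Fiberwise, $\log$ on this neighborhood is a convergent power series $\boldl^{(m)}(t) \in \Q_p[\![t]\!]^g$ whose coefficients depend $p$-adically continuously on $m$ (as $\Log$ is analytic) and are bounded uniformly for $m$ in the compact set $U''$. Since $\log$ kills torsion, $\boldl^{(m)}(\phi_a(m)) = \boldzero$; writing $\boldl^{(m)}(t) = (t - \phi_a(m))\,\boldr^{(m)}(t)$, we get $\boldr^{(m)}(\phi_a(m)) = \boldw^{(m)}(\phi_a(m))$, which is nonzero because the canonical linear system of a hyperelliptic curve is base-point-free, so $\omega_1,\dots,\omega_g$ have no common zero. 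Hence $\rho(\boldl^{(m)}(t)) = \rho(\boldr^{(m)}(t))$ for $t \ne \phi_a(m)$, and since $\rho$ is locally constant on $\Z_p^g \setminus \{\boldzero\}$ (noted in the proof of Lemma~\ref{L:sigma_Sel}) while $\boldr^{(m)}(t) \to \boldw^{(m)}(\phi_a(m))$ as $t \to \phi_a(m)$, the uniform bounds yield an integer $K$, independent of $m \in U''$ and of $a$, such that $v_p(t - \phi_a(m)) \ge K$ forces $\rho(\boldl^{(m)}(t)) = \rho(\boldw^{(m)}(\phi_a(m)))$; after shrinking $U''$, this last value is a constant $v_a$ independent of $m \in U''$, again by local constancy of $\rho$. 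Away from the sections, $\rho \circ \log$ is defined and locally constant, being a composite of the continuous $\log$ with the locally constant $\rho$.

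Then both inclusions follow. The set $\rholog(\CC_{m_0}(\Q_p))$ is the image of $\rholog$ on $\CC_{m_0}(\Q_p) \setminus \{s_a(m_0)\}_a$; deleting the (clopen) disks $v_p(t - \phi_a(m_0)) \ge K$ about the sections leaves a compact set carrying no torsion on which $\rholog$ is locally constant, hence with finite image, so every element of $\rholog(\CC_{m_0}(\Q_p))$ is either some $v_a$ or $\rholog(P_0)$ for a non-torsion $P_0 \in \CC_{m_0}(\Q_p)$. For $\rholog(\CC_{m_0}(\Q_p)) \subseteq \rholog(\CC_m(\Q_p))$ with $m$ near $m_0$: each $v_a$ is realized on $\CC_m$ by the point at any $t \ne \phi_a(m)$ with $v_p(t - \phi_a(m)) \ge K$; and for each of the finitely many non-torsion $P_0$, a local analytic section of $\CC(\Q_p)_{U''} \to U''$ through $P_0$ together with local constancy of $\rholog$ near $P_0$ keeps the value $\rholog(P_0)$ for $m$ in a neighborhood of $m_0$. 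Intersecting finitely many neighborhoods gives this inclusion on a congruence class, renamed $U''$. For the reverse inclusion, if it failed on every neighborhood of $m_0$, there would be $m_i \to m_0$ in $U''$ and, as $\PP^{g-1}(\F_p)$ is finite, one value $v \notin \rholog(\CC_{m_0}(\Q_p))$ with $v = \rholog(P_i)$ for non-torsion $P_i \in \CC_{m_i}(\Q_p)$; by compactness of $\CC(\Q_p)_{U''}$ a subsequence of the $P_i$ converges to some $P_\infty$ lying over $m_0$. If $P_\infty$ is non-torsion, then $v = \rholog(P_i) \to \rholog(P_\infty) \in \rholog(\CC_{m_0}(\Q_p))$, a contradiction; if $P_\infty$ is torsion, then $P_\infty = s_a(m_0)$ for some $a$ (because $m_0 \in U'$), the $P_i$ eventually lie in the coordinate neighborhood of $s_a(m_0)$ with $t(P_i) - \phi_a(m_i) \to 0$, hence eventually $v_p(t(P_i) - \phi_a(m_i)) \ge K$, so $v = \rholog(P_i) = v_a \in \rholog(\CC_{m_0}(\Q_p))$, again a contradiction.

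I expect the reverse inclusion, and within it the Weierstrass-point case of the limit $P_\infty$, to be the main obstacle: one must know that points approaching a Weierstrass point contribute only the single ``reduction at the Weierstrass point'' value $v_a$, uniformly over the family. This is exactly where $m \in U'$ is needed: on $U'$ the torsion points of the fibers are genuine Weierstrass points, lying on sections present on \emph{every} nearby fiber and at which $\log$ vanishes; a fiber with an incidental non-Weierstrass torsion point can be perturbed to nearby fibers on which $\log$ takes small but nonzero values near that location, whose reductions are not controlled by $\rholog(\CC_{m_0}(\Q_p))$, so the conclusion would genuinely fail there.
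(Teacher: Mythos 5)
Your argument is correct and rests on the same central observation as the paper's: since $m_0 \in U' = U \setminus Z$, the vanishing locus of $\log$ on the total space $\CC(\Q_p)_{U'}$ is exactly the locus of Weierstrass sections, where $\log$ vanishes to first order, so $\rho\log$ extends to a continuous map $e$ on all of $\CC(\Q_p)_{U'}$. You carry out this extension explicitly in local analytic coordinates---factoring $\boldl^{(m)}(t) = (t-\phi_a(m))\,\boldr^{(m)}(t)$ and using base-point-freeness of the canonical system to see that $\boldr^{(m)}(\phi_a(m)) = \boldw^{(m)}(\phi_a(m)) \ne \boldzero$---which is exactly the local computation that the paper packages as ``the inverse image of $0$ is a smooth divisor, so the blow-up is an isomorphism, so the rational map extends.'' Where the two proofs genuinely diverge is the endgame: the paper notes that the fibers of the extended map $e$ are open and closed, that their images in $U'$ are open and closed (because $\CC \to \calM$ is smooth and proper), and that $\{m : e(\CC_m(\Q_p)) = I\}$ is a finite Boolean combination of these images, hence clopen; you instead prove the two set-inclusions directly, using analytic sections through non-torsion points for the forward inclusion and a sequential-compactness argument for the reverse. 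Both endgames are valid; the paper's is shorter and avoids the uniform-constant bookkeeping, while yours makes the mechanism near the Weierstrass sections more concrete and isolates precisely where $m_0 \in U'$ is used (your final paragraph is a nice explanation of this). One small point to make explicit for full rigor: you need $K$ large enough and $U''$ small enough that the disks $\{v_p(t-\phi_a(m)) \ge K\}$ around \emph{distinct} Weierstrass sections stay pairwise disjoint over $U''$ (otherwise the identification $\rho\log(P_i) = v_a$ in the reverse-inclusion argument, and the claim that the disk around $s_a(m_0)$ contains non-torsion points, could be ambiguous); since $\WW \to \calM$ is finite \'etale this is easily arranged, but it is used tacitly.
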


\begin{proof}
We have analytic maps of $p$-adic manifolds
\begin{equation}
  \label{E:maps of manifolds}  
	\CC(\Q_p)_{U'} \To 
	\JJ(\Q_p)_{U'} \stackrel{\Log}\To
	\Z_p^g \times {U'} \To
	\Z_p^g \stackrel{\PP}\dashrightarrow 
	\PP^{g-1}(\Q_p) \stackrel{\rho}\To
	\PP^{g-1}(\F_p),
\end{equation}
except that the dashed arrow is indeterminate at $0$.
We resolve the indeterminacy by blowing up the first four manifolds
along the inverse image of $0 \in \Z_p^g$.
The inverse image in $\CC(\Q_p)_{U'}$ is the torsion locus,
which by definition of $U'$ equals only $\WW(\Q_p)_{U'}$,
which is a smooth \emph{divisor} on $\CC(\Q_p)_{U'}$.
Thus the blow-up of $\CC(\Q_p)_{U'}$ is $\CC(\Q_p)_{U'}$
itself, so \eqref{E:maps of manifolds} extends to a continuous map
$e \colon \CC(\Q_p)_{U'} \to \PP^{g-1}(\F_p)$.
The fibers of $e$ are open and closed.
So are their images in $U'$ 
since $\CC \to \calM$ is smooth and proper.
The locus in $U'$ where $e(\CC_m(\Q_p))$ equals a 
given subset of $\PP^{g-1}(\F_p)$
is a finite Boolean combination of these images,
hence again open and closed.
Thus $e(\CC_m(\Q_p))$ is locally constant as $m$ varies in $U'$.
Finally, $\rholog$ is just the restriction of $e$ to 
a dense open subset of $\CC_m(\Q_p)$, and $e$ is locally constant,
so $\rholog(\CC_m(\Q_p)) = e(\CC_m(\Q_p))$.
\end{proof}

\begin{remark}
The set $\rholog(\CC_m(\Q_p))$ is generally \emph{not} 
locally constant in a neighborhood of points of $Z$.
\end{remark}

\subsection{Equidistribution of Selmer elements}
\label{S:equidistribution}

Let $U$ be a trivializing congruence class.
If $m \in \calF_g \intersect U$, 
the trivialization 
lets us construct the diagram~\eqref{E:big} for 
$C \colonequals \CC_m$ and $J \colonequals \JJ_m$;
in particular, we obtain $\sigma \colon \Sel_p J \to \F_p^g$.

For each $g \ge 1$ and prime $p$, 
we may now formulate the following equidistribution conjecture, 
compatible with both the heuristics in \cite{Poonen-Rains2012-selmer}
and the theorems for $p=2$ in \cite{Bhargava-Gross-preprint}.

\begin{conjectureEq}
For any trivializing congruence class $U$
and any $w \in \F_p^g$,
the average size of 
$\{s \in \Sel_p J \setminus \{0\} : \sigma(s)=w\}$
as $C$ varies in $\calF_g \intersect U$
is $p^{1-g}$.
\end{conjectureEq}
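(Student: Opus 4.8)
The plan is to deduce Conjecture~$\Eq_g(p)$ from an equidistribution theorem for nonzero $p$-Selmer elements of hyperelliptic Jacobians of the same shape as \cite{Bhargava-Gross-preprint}*{Theorems 11.1 and~12.4}, which establish the case $p=2$. The value $p^{1-g}$ is the one predicted by the Poonen--Rains heuristics \cite{Poonen-Rains2012-selmer}, and it is internally consistent: summing over the $p^g$ choices of $w \in \F_p^g$ would say that the average of $\#(\Sel_p J \setminus \{0\})$ over $\calF_g \intersect U$ is $p$, hence that the average of $\#\Sel_p J$ is $p+1$, independently of $g$ and of $U$. So the content is an \emph{exact} average together with equidistribution of the local images $\sigma(s)$, both \emph{uniform} in the congruence conditions that define $U$.

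The essential first step is an arithmetic invariant theory input: a representation $(\mathbf G, \mathbf V)$ over $\Z$ together with a map from the categorical quotient $\mathbf V /\!\!/ \mathbf G$ to $\calM \subseteq \Aff^{2g+1}$, such that for $C \in \calF_g$ with Jacobian $J$ the nonzero elements of $\Sel_p J$ are parametrized --- up to a contribution that can be computed directly, such as the classes coming from $J[p](\Q)$ --- by the $\mathbf G(\Z)$-orbits on the fiber of $\mathbf V(\Z) \to \calM(\Z)$ over $C$, with local solubility at each place $v$ detected by the existence of a $\mathbf G(\Z_v)$-orbit in the fiber over $\calM(\Z_v)$. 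For $p=2$ this role is played by pencils of quadrics in $\PP^{2g+1}$ (Reid \cite{Reid-thesis}, Donagi \cite{Donagi1980}, Wang \cite{WangXJ-thesis}), which is exactly the machinery behind \cite{Bhargava-Gross-preprint}; for odd $p$ one would search among Vinberg-theoretic ($\theta$-group) representations for the analogue.

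Granting such a parametrization, the remainder follows Bhargava's circle of methods. First I would impose on the relevant component of $\mathbf V(\Z)$ the congruence conditions on $(a_1,\ldots,a_{2g+1})$, pulled back along $\mathbf V /\!\!/ \mathbf G \to \calM$, that cut out $\calF_g \intersect U$, together with the further $p$-adic conditions recording the value of $\sigma$. Then I would count $\mathbf G(\Z)$-orbits of height $<X$ in each such class by averaging over a fundamental domain and applying geometry of numbers, with an Ekedahl-type sieve to discard degenerate or non-maximal orbits and a power-saving error term; the count comes out as a product of a global volume and local masses $\prod_v \mu_v$. Reading off the ratio to $\#\calF_{g,X}$ gives the average of $\#\{s \in \Sel_p J \setminus \{0\} : \sigma(s)=w\}$, and the local masses at $v \ne p$ and at $\infty$ should contribute precisely the factors making this equal $p$ once one forgets $\sigma$, as in \cite{Bhargava-Gross-preprint}. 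Equidistribution over $w$ is then a purely local statement at $p$: that the $p$-adic mass $\mu_p$ of the sub-locus of orbits with $\sigma(s)=w$ is the same for all $p^g$ values of $w$. Here the hypothesis that $U$ is trivializing is used --- it makes $J(\Q_p)/pJ(\Q_p) \isom \F_p^g$ vary continuously, so $\sigma$ is locally constant on the $p$-adic orbit space --- and the equal-mass claim should follow from the fact that $J(\Q_p)/pJ(\Q_p)$ acts simply transitively, via the local Kummer sequence, on the set of local orbit classes with a fixed invariant, so that the orbit space fibers over $\F_p^g$ with fibers of equal measure.

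The main obstacle is the first step. For $p>2$ no representation is currently known whose \emph{integral} orbits parametrize the \emph{full} $p$-Selmer group of \emph{every} genus-$g$ hyperelliptic Jacobian with a rational Weierstrass point, with enough control at all places to run the orbit count and the sieve uniformly in the congruence conditions defining $U$. Existing $\theta$-group parametrizations tend to cover only a thin subfamily, or furnish a merely rational rather than integral parametrization, or lose uniformity in congruence conditions --- any one of which defeats the statement as worded. A secondary difficulty is the power-saving error term needed for the limit defining the average to exist on each individual congruence class; for $p=2$ this leans on the specific geometry of pencils of quadrics. Absent a new invariant-theoretic construction for odd $p$, these are the points at which the argument stalls, which is why for $p>2$ the statement is recorded here as a conjecture.
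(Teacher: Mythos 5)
The statement is a conjecture that the paper does not prove: its only support there is consistency with the Poonen--Rains heuristics and the case $p=2$, which the paper derives (Theorem~\ref{T:Eq(g,2)}) from the Bhargava--Gross theorems used as a black box, exactly as you describe, so your treatment --- conjectural for odd $p$, with the stated obstruction being the absence of an integral orbit parametrization of $p$-Selmer elements --- matches the paper's stance. The one concrete step you elide even for $p=2$ is the transfer from the family Bhargava--Gross actually work with (where $a_1=0$ and no prime $p$ satisfies $p^{2m}\mid a_m$ for all $m$) to $\calF_g\intersect U$ with its congruence conditions, which the paper carries out in Remark~\ref{R:a_1=0}.
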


\begin{theorem}
\label{T:Eq(g,2)}
For each $g \ge 1$, $\Eq_g(2)$ holds.
\end{theorem}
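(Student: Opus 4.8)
\emph{Proof proposal.}

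The plan is to deduce $\Eq_g(2)$ as a formal consequence of the Bhargava--Gross results \cite{Bhargava-Gross-preprint}*{Theorems~11.1 and~12.4}, treated as a black box, by matching their local equidistribution statement at the prime~$2$ with the map~$\sigma$ of diagram~\eqref{E:big}. Concretely, \cite{Bhargava-Gross-preprint} equidistributes the restrictions at~$2$ of the nonzero $2$-Selmer elements in a local object that we will identify with $J(\Q_2)/2J(\Q_2)$, and the content of $\Eq_g(2)$ is then the push-forward of this equidistribution along $J(\Q_2)/2J(\Q_2) \surjects \F_2^g$, together with the bookkeeping needed to keep the fibres of that map of constant size.

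First I would pass to the subfamily with $a_1 = 0$ via Remark~\ref{R:a_1=0}: a rational translation in~$x$ changes neither the Jacobian, nor its Selmer group, nor diagram~\eqref{E:big}, so the content of this reduction is only the comparison of counting measures carried out there, after which \cite{Bhargava-Gross-preprint} applies to $\calF_g$. Next I would fix a trivializing congruence class~$U$ and a vector $w \in \F_2^g$, and record the factorization of~$\sigma$ visible in~\eqref{E:big}: it is the composite
\[
	\Sel_2 J \stackrel{\res}{\To} \frac{J(\Q_2)}{2J(\Q_2)} \stackrel{\log \tensor \F_2}{\surjects} \F_2^g,
\]
where the restriction~$\res$ lands in the image of the Kummer map by the definition of the Selmer condition at~$2$, and where $\log \tensor \F_2$ has kernel the image of $J(\Q_2)_{\tors}$. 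Because $U$ is trivializing, $J(\Q_2)_{\tors}$ takes a constant value on $\calF_g \intersect U$; hence $t \colonequals \#J(\Q_2)[2]$ is constant, the group $J(\Q_2)/2J(\Q_2)$ has constant order $2^g t$, and every fibre of $\log \tensor \F_2$ has size~$t$.

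Third, I would extract from \cite{Bhargava-Gross-preprint}*{Theorems~11.1 and~12.4} the precise assertion that, as $C$ ranges over $\calF_g \intersect U$, the classes $\res(s)$ for $s \in \Sel_2 J \setminus \{0\}$ become equidistributed in the finite group $J(\Q_2)/2J(\Q_2)$ with total mass~$2$; equivalently, for each coset~$c$ the average of $\#\{s \in \Sel_2 J \setminus \{0\} : \res(s) = c\}$ equals $2/(2^g t)$. Summing this over all~$c$ recovers the average $\#\Sel_2 J = 3$ of Theorem~11.1, a useful consistency check; summing instead over the $t$ cosets~$c$ that map to a fixed $w \in \F_2^g$ shows that the average of $\#\{s \in \Sel_2 J \setminus \{0\} : \sigma(s) = w\}$ equals $t \cdot 2/(2^g t) = 2^{1-g}$, which is exactly $\Eq_g(2)$.

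The step I expect to be the main obstacle is the third one: pinning down the exact shape of the Bhargava--Gross equidistribution theorem and verifying that it is equidistribution with respect to the \emph{uniform} measure on $J(\Q_2)/2J(\Q_2)$ (rather than some weighted local mass), that the space of locally soluble orbits at~$2$ in \cite{Bhargava-Gross-preprint} is identified with $J(\Q_2)/2J(\Q_2)$ compatibly with the restriction map~$\res$ (including for curves with bad reduction at~$2$, which occur with positive density), and that the framework permits imposing the $2$-adic congruence condition cutting out~$U$. Once the black-box input is in this form, the remainder is the one-line push-forward along $\log \tensor \F_2$ recorded above.
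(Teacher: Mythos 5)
Your proposal is correct and follows essentially the same route as the paper: reduce to BG's setting via Remark~\ref{R:a_1=0}, extract total mass from \cite{Bhargava-Gross-preprint}*{Theorem~11.1} and equidistribution in $J(\Q_2)/2J(\Q_2)\cong\F_2^g\times F/2F$ from \cite{Bhargava-Gross-preprint}*{Theorem~12.4}, then push forward along the projection to $\F_2^g$. The explicit $t$-fold fibre bookkeeping you add is a correct unwinding of the paper's one-line statement that equidistribution is preserved under the projection, and the ``main obstacles'' you flag are precisely what the paper absorbs into the black-box citation and Remark~\ref{R:a_1=0}.
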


\begin{proof}
By \cite{Bhargava-Gross-preprint}*{Theorem~11.1}
(adapted as in Remark~\ref{R:a_1=0} below), 
the average size of $\Sel_2 J \setminus \{0\}$ 
for $C \in \calF_g \intersect U$ is $2$.
The trivialization identifies each group $J(\Q_2)$
with a fixed group $\Z_2^g \times F$,
and \cite{Bhargava-Gross-preprint}*{Theorem~12.4}
states that the images of the nonzero Selmer elements 
under $\Sel_2 J \to J(\Q_2)/2J(\Q_2) \isom \F_2^g \times F/2F$
are equidistributed in $\F_2^g \times F/2F$, 
so their images under the projection to $\F_2^g$ are equidistributed too.
Thus on average there are $2/\#\F_2^g = 2^{1-g}$ nonzero Selmer elements 
mapping to a given element of $\F_2^g$.
\end{proof}

\begin{remark}
\label{R:a_1=0}
Let $\calF'_g$ be the subset of $\calF_g$ where $a_1=0$,
and let $\calF''_g$ be the subset of $\calF'_g$
where there is no prime $p$ with $p^{2m} \mid a_m$ for all $m$. 
The paper \cite{Bhargava-Gross-preprint} works not with $\calF_g$,
but with $\calF''_g$.
Here we explain how to transfer the equidistribution results
from $\calF''_g$ to $\calF_g$.

First, given subsets
\[
	B_\infty \colonequals 
	[b_2,b_2'] \times \cdots \times [b_{2g+1},b_{2g+1}'] 
	\subseteq \R^{2g},
\]
and $B_p$ a coset of a finite-index subgroup of $\Z_p^{2g}$ 
for $p$ in some finite set $S$,
call $B \colonequals B_\infty \times \prod_{p \in S} B_p$ a \defi{box}.
For $X>0$, consider the curves in $\calF'_g$ 
whose coefficient tuple satisfies
$[a_2/X^2,\ldots,a_{2g+1}/X^{2g+1}] \in B_\infty$
and $(a_2,\ldots,a_{2g}) \in B_p$ for all $p \in S$.
The arguments of \cite{Bhargava-Gross-preprint} 
carry over essentially without change
to prove equidistribution of Selmer elements
for such curves as $X \to \infty$.
By taking finite linear combinations,
one obtains a variant that counts the same curves
but with a weight that is a step function that is 
finitely piecewise constant on sub-boxes of $B$.
Now if $w$ is any bounded weight function on~$B$ such for
every $\epsilon > 0$, there is a step function $s$
as above such that $|w - s| \le \epsilon$ outside a finite union
of sub-boxes that has total measure less than~$\epsilon$,
then by comparing with~$s$ and taking the limit
as $\epsilon \to 0$, we can count both curves and Selmer elements
weighted according to~$w$.
In particular, for nonnegative $w$ bounded below by a positive constant
on some sub-box, we can deduce an analogous Selmer equidistribution result.

Let $S$ be a finite set of primes including those dividing $2g+1$.
The ``complete the $(2g+1)^{\textup{st}}$ power and scale''
map sending $f(x)$ to $(2g+1)^{2g+1} f((x-a_1)/(2g+1))$
defines a map on coefficient tuples
from $[-1,1]^{2g+1} \times \prod_{p \in S} \Z_p^{2g+1}$
into some box $B$,
and the pushforward of the uniform measure is a weight function $w$ as above.
The corresponding map from $\calF_{g,X}$ to $\calF'_g$
has fibers whose size is approximately proportional to $w$
at the corresponding $f \in \calF'_g$, normalized,
so the limit of the average for $\calF_g$
equals the limit of the weighted average for~$\calF'_g$.
(One can also impose finitely many congruence conditions
by replacing each $\Z_p^{2g+1}$ by a coset of a finite-index subgroup.)
\end{remark}

\begin{remark}
Although $\Eq_g(p)$ is about $p$-adic equidistribution of $p$-Selmer elements,
one could also ask about $v$-adic equidistribution of $p$-Selmer elements
for any place $v$ of $\Q$.
In fact, \cite{Bhargava-Gross-preprint}*{Theorem~12.4} proves
$v$-adic equidistribution of $2$-Selmer elements for all $v$,
and can even handle finitely many $v$ simultaneously.
\end{remark}

\subsection{A general density result}
\label{S:general density result}

\begin{proposition} 
\label{P:general density result}
Let $U$ be a trivializing congruence class
such that the subset $\rholog(\CC_m(\Q_p))$ of~$\PP^{g-1}(\F_p)$ 
is constant for $m \in U$, say equal to $I$.
Then, for $C \in \calF_g \intersect U$ 
outside a subset of relative upper density at most $(1+\#I)p^{1-g}$,
we have:
\begin{enumerate}[\upshape (i)]
\item if $p=2$, then $C(\Q) = C(\Q_2) \intersect \overline{J(\Q)} = \{\infty\}$.
\item if $p>2$ and $\Eq_g(p)$ holds, then
$C(\Q)=\{\infty\}$ and $C(\Q_p) \intersect \overline{J(\Q)}$
consists of Weierstrass points.
\end{enumerate}
\end{proposition}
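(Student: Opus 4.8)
The plan is to combine the reformulated Chabauty criterion (Proposition~\ref{P:rational points are odd}) with the two inputs that make it usable for a positive fraction of curves: the equidistribution theorem $\Eq_g(p)$ (known unconditionally for $p=2$ by Theorem~\ref{T:Eq(g,2)}, assumed otherwise), which controls $\#\PP\sigma(\Sel_p J)$, and Corollary~\ref{C:odd torsion on curves}, which removes the unwanted torsion points left over in Proposition~\ref{P:rational points are odd}. First I would single out the ``bad'' set of $C \in \calF_g \intersect U$: say that $C$ is bad if either $\sigma \colon \Sel_p J \to \F_p^g$ fails to be injective, or the images $\rholog(C(\Q_p))$ and $\PP\sigma(\Sel_p J)$ meet in $\PP^{g-1}(\F_p)$. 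Since $\rholog(\CC_m(\Q_p)) = I$ is constant on $U$, badness of the second kind means there is a nonzero $s \in \Sel_p J$ with $\PP\sigma(s) \in I$; and badness of the first kind means there is a nonzero $s \in \Sel_p J$ with $\sigma(s) = 0$, in which case $\PP\sigma(s)$ is simply undefined. So, writing $I' \colonequals I \union \{\text{``undefined''}\}$ inside $\PP^{g-1}(\F_p) \union \{*\}$, the bad set is exactly the set of $C$ for which $\#\{s \in \Sel_p J \setminus \{0\} : \PP\sigma(s) \in I'\} \ge 1$.

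Next I would bound the relative upper density of the bad set by a first-moment (Markov) argument. By $\Eq_g(p)$, for each $w \in \F_p^g$ the average over $C \in \calF_g \intersect U$ of $\#\{s \in \Sel_p J \setminus \{0\} : \sigma(s) = w\}$ is $p^{1-g}$; summing over the preimages in $\F_p^g$ of the $\#I$ points of $I$ together with the fibre over $0$ (which accounts for the ``undefined'' case — note $\PP$ is undefined precisely at $0 \in \F_p^g$), the average of $\#\{s \in \Sel_p J \setminus \{0\} : \PP\sigma(s) \in I'\}$ is at most $(1 + \#I)p^{1-g}$. Since this counting function is a nonnegative integer that is $\ge 1$ on the bad set, the relative upper density of the bad set is at most this average, namely $(1+\#I)p^{1-g}$, as claimed. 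For $C$ in the good set we may apply Proposition~\ref{P:rational points are odd}: $\sigma$ is injective and the two images are disjoint, so $C(\Q_p) \intersect \overline{J(\Q)} \subseteq J(\Q_p)[p']$, i.e.\ consists of torsion points of order prime to $p$.

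Finally I would dispose of these residual odd-order torsion points. The point $\infty \in C(\Q)$ maps to $0$, so it certainly lies in $C(\Q_p) \intersect \overline{J(\Q)}$; the task is to show nothing else does. For~(ii), with $p>2$, the Weierstrass points are $2$-torsion, hence of order prime to $p$, so they are not automatically excluded by Proposition~\ref{P:rational points are odd}; but any \emph{non-}Weierstrass point of $C(\Q_p) \intersect J(\Q_p)_{\tors}$ would force $C \in \calF_g$ into the density-zero set of Corollary~\ref{C:odd torsion on curves}, and since $U$ is a congruence class its relative density inside $\calF_g \intersect U$ is still zero, so enlarging the bad set by this null set changes neither the density bound nor the conclusion; thus for good $C$, $C(\Q_p) \intersect \overline{J(\Q)}$ consists of Weierstrass points, and in particular $C(\Q) \subseteq C(\Q_p) \intersect \overline{J(\Q)}$ is a subset of the Weierstrass points — but a $\Q$-rational Weierstrass point other than $\infty$ would again be a non-$\infty$ rational torsion point, which the monodromy/Hilbert-irreducibility argument (Proposition~\ref{P:torsion on curves over Q}, also of density zero and absorbable into the null set) rules out, giving $C(\Q) = \{\infty\}$. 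For~(i), with $p=2$, the odd-order torsion points of $J(\Q_2)$ form the prime-to-$2$ part, which for a non-Weierstrass point again lands in the Corollary~\ref{C:odd torsion on curves} null set, so for good $C$ (after removing that null set) $C(\Q_2) \intersect \overline{J(\Q)}$ contains only Weierstrass points and, as above, only $\infty$ among $\Q$-points; since $C(\Q) \subseteq C(\Q_2) \intersect \overline{J(\Q)} \subseteq \{\infty\}$ and $\infty \in C(\Q)$, all three sets equal $\{\infty\}$. I expect the main obstacle to be purely bookkeeping: making sure that the various density-zero exceptional sets (non-Weierstrass torsion on $C(\Q_p)$, rational torsion on $J(\Q)$) are genuinely absorbable into the stated relative-upper-density bound — which they are, since adding a null set cannot increase an upper density — and keeping the ``undefined value of $\PP\sigma$'' case carefully tracked through the Markov estimate so that the constant comes out to exactly $(1+\#I)$ rather than $\#I$ or $(2+\#I)$.
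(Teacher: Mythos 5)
Your overall structure matches the paper's: apply a first-moment (Markov) argument using $\Eq_g(p)$ to bound the density of ``bad'' curves, apply Proposition~\ref{P:rational points are odd} on the complement, and absorb the density-zero exceptional sets from Proposition~\ref{P:torsion on curves over Q} and Corollary~\ref{C:odd torsion on curves}. The final torsion bookkeeping is also essentially right, modulo a slightly scrambled explanation of why in the $p=2$ case one passes from ``Weierstrass points'' to ``$\{\infty\}$'' (you need to intersect with ``odd order,'' which for Weierstrass points leaves only $\infty$).

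However, there is a genuine numerical gap in your Markov step when $p>2$. You claim that summing the $\Eq_g(p)$ average $p^{1-g}$ over the fibre over~$0$ and ``the preimages of the $\#I$ points of~$I$'' gives an average of at most $(1+\#I)p^{1-g}$. But each point of $\PP^{g-1}(\F_p)$ has exactly $p-1$ nonzero preimages in~$\F_p^g$, so the expected count of nonzero Selmer elements landing in $I' = I \cup \{\text{undefined}\}$ is $\bigl(1 + (p-1)\#I\bigr)p^{1-g}$, which exceeds $(1+\#I)p^{1-g}$ whenever $p>2$. Straightforward Markov on that count therefore produces the wrong bound. The paper's fix is to use Markov with a refinement: any curve possessing a nonzero $s \in \Sel_p J$ with $\PP\sigma(s) \in I$ in fact possesses at least $p-1$ such elements (the nonzero scalar multiples $\lambda s$, all mapping to the same point of $\PP^{g-1}(\F_p)$), so the density of such curves is at most the average divided by $p-1$, i.e.\ $\#I\,p^{1-g}$; added to the $p^{1-g}$ coming from the fibre over~$0$, this gives $(1+\#I)p^{1-g}$. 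For $p=2$ this distinction vanishes ($p-1=1$) and your computation happens to come out right, which is perhaps why it looked clean — but as written the argument does not establish assertion~(ii) with the stated constant.
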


\begin{proof}
By Theorem~\ref{T:Eq(g,2)}, $\Eq_g(2)$ holds;
if $p>2$, assume $\Eq_g(p)$.
Then for $C \in \calF_g \intersect U$, 
the average number of nonzero elements of $\Sel_p J$ 
mapped by $\sigma$ to $0$ is $p^{1-g}$,
so the relative upper density of $C$ having such a Selmer element
is at most $p^{1-g}$.
Similarly, the average number of nonzero elements of $\Sel_p J$
mapped by $\PP\sigma$ into $I$ is $(p-1)\#I  p^{1-g}$,
but each curve with such an element has at least $p-1$ such elements
(its nonzero multiples),
so the relative upper density of such curves is 
at most $\#I p^{1-g}$.
Together, these have relative upper density at most $(1+\#I)p^{1-g}$,
and this is unchanged if we include the
density zero sets of Proposition~\ref{P:torsion on curves over Q}
and Corollary~\ref{C:odd torsion on curves}.

For the \emph{other} $C \in \calF_g \intersect U$,
Proposition~\ref{P:rational points are odd}
states that 
$C(\Q_p) \intersect \overline{J(\Q)} \subseteq J(\Q_p)[p']$.
Since we excluded the set of Corollary~\ref{C:odd torsion on curves},
$C(\Q_p) \intersect \overline{J(\Q)}$ consists of Weierstrass points.
For $p=2$, these together imply
$C(\Q_p) \intersect \overline{J(\Q)} = \{\infty\}$.
For all $p$, 
our exclusion of the set of Proposition~\ref{P:torsion on curves over Q} 
implies $C(\Q)=\{\infty\}$.
\end{proof}

\begin{remark}
\label{R:total measure 1}
By Lemma~\ref{L:disjoint union}
and Proposition~\ref{P:locally constant image of rholog},
there is a disjoint union of sets $U$ satisfying the hypothesis of 
Proposition~\ref{P:general density result}
and having total measure~$1$.
\end{remark}

\begin{remark}
We expect that for $p>2$, there is a positive density of 
$C \in \calF_g$ such that $C(\Q_p) \intersect \overline{J(\Q)}$
is strictly larger than $\{\infty\}$.
The reason is that we expect that there is a positive density of $C$
such that $C$ has good reduction at $p$,
there is a Weierstrass point $W \in C(\Q_p)\setminus C(\Q)$,
and there is a point $P \in J(\Q)$ with the same reduction as $W$.
For such curves, $p^n P \to W$ as $n \to \infty$,
so $W \in C(\Q_p) \intersect \overline{J(\Q)}$.
\end{remark}


\section{Average number of residue disks}

In this section, we fix $g \ge 1$ and a prime $p$.
For a random $C \in \calF_g(\Z_p)$,
let $\calC$ be its minimal proper regular model.
The main goal of this section is the following.

\begin{theorem} \label{T:average number of smooth points}
We have $\E \#\calC^{\smooth}(\F_p) \le p+1$.
\end{theorem}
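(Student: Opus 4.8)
The plan is to reduce the computation of $\E \#\calC^{\smooth}(\F_p)$ to a recursive analysis of how the smooth locus changes as one resolves singularities of the standard compactification by repeated blowing-up. Concretely, start with the standard compactification $\calX \to \Spec \Z_p$ of $y^2 = f(x)$ for a random $f \in \calF_g(\Z_p)$; this is a proper $\Z_p$-scheme, generically $\calX_{\Q_p} = C$, but it may fail to be regular along its special fiber. The key point is that $\#\calC^{\smooth}(\F_p) = \#C(\Q_p)/(\textup{residue disks})$ can be read off from $\calX$ after resolving, and that at each non-regular $\F_p$-point the local picture (after completing) is governed by the reduction of $f$ modulo powers of $p$. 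One blows up such a point, observes that the exceptional fiber contributes a controlled number of new smooth $\F_p$-points while possibly introducing new non-regular points of its own, and iterates. Because $\Z_p^{2g+1}$ carries Haar measure, the coefficients appearing in the blown-up local equations are again distributed according to (conditional) Haar measure, so the process that counts smooth $\F_p$-points becomes a branching process of Bienaymé--Galton--Watson type; this is where Lemma~\ref{L:E X_n} (cited in the introduction) enters.

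First I would set up, for $p$ odd, the elementary count: if $f$ has good reduction at $p$ the standard compactification is already regular with smooth special fiber, and $\#\calC^{\smooth}(\F_p) = \#\calX_{\F_p}(\F_p) \le p+1$ trivially; more generally one needs to understand the locus where $\bar f$ has repeated roots, which happens with probability governed by $v_p(\Delta)$. For $p=2$ there is the extra subtlety that $y^2 = f(x)$ is not smooth even when $f$ is separable mod $2$, so the relevant model is a bit different and one must be careful about which affine patch is used; the standard compactification still works as the starting object, but the local analysis at points where $\bar f$ vanishes (to various orders) must be carried out by hand. In both cases the upshot should be: writing $X_0$ for the number of smooth $\F_p$-points contributed by the ``good'' part and $X_n$ for the contribution after $n$ rounds of blow-up at bad points, one gets $\E\#\calC^{\smooth}(\F_p) = \E\bigl(\lim_n X_n\bigr)$, and the recursion $\E[X_{n+1} \mid X_n]$ is controlled by a fixed transition law.

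The main obstacle, and the technical heart of the argument, is identifying precisely how blowing up a non-regular $\F_p$-point of the model affects the count: one must show that the expected number of smooth $\F_p$-points is non-increasing (or at least bounded) under the blow-up step in expectation, so that the branching process is subcritical enough that the limit has expectation at most $p+1$. This requires (a) a clean description of the completed local ring at a non-regular point in terms of the $2$-adic (or $p$-adic) expansion of $f$, (b) a count of how many $\F_p$-points on the exceptional $\PP^1$ are smooth versus how many are again non-regular, together with the conditional distribution of the ``new'' defining data, and (c) feeding these transition probabilities into Lemma~\ref{L:E X_n} to extract the bound $p+1$. I expect step (b) to be the delicate one, since it is a genuine case analysis depending on the valuation of the local equation, and getting the constant down to exactly $p+1$ (rather than a weaker bound) will require the recursion to be set up tightly. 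Once the branching-process computation yields $\E\#\calC^{\smooth}(\F_p) \le p+1$, we are done; in particular for $p=2$ this gives the bound $3$ promised in the introduction.
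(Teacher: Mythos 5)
Your high-level strategy matches the paper's: blow up non-regular $\F_p$-points of the standard compactification, observe that the rescaled coefficients are again Haar-distributed, and analyze the resulting branching process. But two genuine gaps remain before this is a proof, and one conceptual misstep needs correcting.

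First, the quantity you actually compute by iterated blow-up is \emph{not} $\#\calC^{\smooth}(\F_p)$ for the minimal proper regular model $\calC$. Iterated blow-up need not terminate at $\calC$ (it overshoots, and $\calC$ is reached only after blowing \emph{down}), and moreover the paper's algorithm never produces a regular model at all. The missing move is the introduction of a \emph{decent} model $\calD$: a proper model such that the reduction map $C(\Q_p) \to \calD(\F_p)$ lands in $\calD^{\smooth}(\F_p)$. One then proves $\#\calC^{\smooth}(\F_p) \le \#\calD^{\smooth}(\F_p)$ (the paper's Lemma~\ref{L:non-minimal model}, via the minimal desingularization of $\calD$ and the fact that $\calE \to \calC$ is a composition of point blow-ups, each surjective on $\F_p$-points). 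Without that comparison, your asserted equality $\E\#\calC^{\smooth}(\F_p) = \E\bigl(\lim_n X_n\bigr)$ is unjustified; the correct relation is only an inequality, and it is established through the decent-model lemma, not through convergence of the blow-up process to $\calC$.

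Second, you cite Lemma~\ref{L:E X_n} as a given, but that lemma is the computational heart of this theorem and must be proved. It requires (i) the explicit case analysis you flag as ``delicate'' --- namely, partitioning $\calH_n$ according to $(v_p(h(0)), v_p(h'(0)))$ with additional subcases at $p = 2$, assigning to each case its Haar probability ($1-p^{-1}$, $p^{-1}-p^{-2}$, $p^{-2}-p^{-3}$, $p^{-3}$) and its conditional contribution ($1$, $1$, $0$, and a sum of $p$ copies of $X_{n+1}$ respectively); and (ii) a uniform-integrability argument (the bound $\Prob(X_n \ge B) = O(B^{-2})$, obtained by iterating $\Prob(X_n \ge B) \le p^{-2}\Prob(X_{n+1} \ge B/p)$) that justifies passing to the limit in the recursion $\E X_n = 1 - p^{-2} + p^{-2}\cdot p\cdot \E X_{n+1}$ to conclude $\E X_n = 1$. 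Acknowledging these as ``technical'' does not discharge them; in particular the exact constant $p+1$ in the theorem comes entirely from that case-by-case computation producing $\E X_n = 1$ (and hence $\E\#\calD^{\smooth}(\F_p) = 1 + p\cdot 1 = p+1$), so there is no slack to waive it.
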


Because it is difficult to construct $\calC$ explicitly,
we construct a model with a weaker property.
Call a $\Z_p$-model $\calD$ of $C$ \defi{decent}
if $\calD$ is proper over $\Z_p$
and the image of $C(\Q_p) = \calD(\Z_p) \to \calD(\F_p)$
is contained in $\calD^{\smooth}(\F_p)$.

\begin{lemma}
\label{L:non-minimal model}
If $\calD$ is decent, 
then $\#\calC^{\smooth}(\F_p) \le \#\calD^{\smooth}(\F_p)$.
\end{lemma}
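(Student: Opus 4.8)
The plan is to compare the two $\Z_p$-models $\calC$ and $\calD$ by relating both to a common resolution. Since $\calC$ is the minimal proper regular model and $\calD$ is proper over $\Z_p$, the valuative criterion of properness gives a rational map $\calD \dashrightarrow \calC$ over $\Z_p$ (both restrict to $C$ on the generic fiber, and $\calC$ is proper, so any $\Q_p$-point of $\calD$, i.e.\ any $\Z_p$-point since $\calD$ is proper, extends uniquely to a $\Z_p$-point of $\calC$). More precisely, after normalizing and resolving, I would pick a proper regular $\Z_p$-scheme $\calD'$ dominating both: for instance take $\calD'$ to be a resolution of singularities of the closure of the diagonal $C \hookrightarrow \calD \times_{\Z_p} \calC$, so that we have proper birational morphisms $f \colon \calD' \to \calD$ and $h \colon \calD' \to \calC$ that are isomorphisms on generic fibers.

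The key point is then a counting argument on $\F_p$-points of smooth loci. First I would observe that $C(\Q_p) = \calD(\Z_p) = \calD'(\Z_p) = \calC(\Z_p)$, using properness of all three models and the fact that $\calD'\to\calD$ and $\calD'\to\calC$ are proper birational with regular target in the case of $\calC$ (so $\Z_p$-points lift uniquely). Since $\calD$ is decent, the reduction map sends $C(\Q_p)$ into $\calD^{\smooth}(\F_p)$; I want to transfer this to $\calC$. A $\Z_p$-point of $\calC$ reduces to a point of $\calC^{\smooth}(\F_p)$ automatically, because $\calC$ is regular and any $\Z_p$-point lands in the smooth locus of a regular arithmetic surface (the local ring at the image point is regular of dimension $2$, and the section realizes it as a regular $\Z_p$-point, forcing smoothness there). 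So in fact \emph{every} point of $\calC^{\smooth}(\F_p)$ that is hit by $C(\Q_p)$ is already accounted for, and conversely every residue disk of $C(\Q_p)$ meets $\calC^{\smooth}(\F_p)$ in exactly one point; the same holds for $\calD$. Thus $\#\calC^{\smooth}(\F_p)$ counts at least the residue disks of $C(\Q_p)$, but I need an \emph{upper} bound by $\#\calD^{\smooth}(\F_p)$, not just equality with the number of residue disks — so the real content is that a decent model can only have \emph{more} smooth $\F_p$-points than the minimal one.

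For that direction I would use the morphism $h \colon \calD' \to \calC$ (minimal model absorbs any other regular model via a composition of blow-downs, by the theory of minimal models of arithmetic surfaces, e.g.\ as in Lichtenbaum or Shafarevich). Concretely, $\calC$ is obtained from $\calD'$ by successively contracting $(-1)$-curves, and each such contraction does not decrease the number of smooth $\F_p$-points: contracting a $\PP^1_{\F_p}$ to a point either removes at least one $\F_p$-point and is regular at the image (so the net change in $\#(\text{smooth }\F_p\text{-points})$ is $\le 0$), since the exceptional $\PP^1$ has $p+1\ge 2$ points and they map to one point. Hence $\#\calC^{\smooth}(\F_p) \le \#{\calD'}^{\smooth}(\F_p)$. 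Then I compare $\calD'$ with $\calD$: since $f\colon \calD' \to \calD$ is a proper birational morphism of proper $\Z_p$-surfaces with $\calD'$ regular, and every point of $\calD^{\smooth}(\F_p)$ has a regular (indeed smooth) neighborhood over which $f$ is an isomorphism, the smooth $\F_p$-points of $\calD$ are in bijection with their preimages, so $\#{\calD'}^{\smooth}(\F_p) = \#\calD^{\smooth}(\F_p) + (\text{extra points over the non-smooth locus of }\calD) \ge \#\calD^{\smooth}(\F_p)$ is the wrong inequality direction — I actually want $\#{\calD'}^{\smooth}(\F_p)$ bounded \emph{above}. So instead I would bound $\#\calC^{\smooth}(\F_p)$ directly by the number of residue disks and then observe that the number of residue disks is $\le \#\calD^{\smooth}(\F_p)$ because $\calD$ decent means each residue disk contributes a distinct point of $\calD^{\smooth}(\F_p)$ while $\calD^{\smooth}(\F_p)$ may have points not hit by $C(\Q_p)$.

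The main obstacle, and the clean way around it, is exactly this last point: I should not go through $\calD'$ at all for the final inequality, but argue that $\#\calC^{\smooth}(\F_p)$ equals the number of residue disks of $C$ (each residue disk reduces to a unique smooth $\F_p$-point of $\calC$, and by regularity of $\calC$ \emph{every} smooth $\F_p$-point is the reduction of some $\Z_p$-point of $\calC = $ some point of $C(\Q_p)$, using Hensel's lemma on the smooth locus), whereas for the decent model $\calD$ the same Hensel argument shows each residue disk gives a distinct smooth $\F_p$-point of $\calD$ but not conversely. Hence
\[
\#\calC^{\smooth}(\F_p) \;=\; \#\{\text{residue disks of }C\} \;\le\; \#\calD^{\smooth}(\F_p),
\]
which is the claim. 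The one subtlety to check carefully is that on a \emph{regular} proper $\Z_p$-model, the smooth locus of the special fiber is hit surjectively by $C(\Q_p)$ under reduction — this is Hensel/smoothness of $\calC^{\smooth}$ over $\Z_p$ plus $C(\Q_p) = \calC(\Z_p)$ — and that for $\calD$ merely decent we only get the injective direction, which is precisely what ``decent'' was designed to give.
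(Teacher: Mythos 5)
Your first half sets up exactly the right machinery — a regular proper $\Z_p$-scheme $\calD'$ dominating both $\calC$ and $\calD$, the factoring of $\calD' \to \calC$ as a sequence of blow-downs from minimality, and the observation that regularity plus Hensel makes $\calX^{\smooth}(\F_p)$ equal the image of $C(\Q_p)$ under reduction for any regular $\calX$. You even reach the correct inequality $\#\calC^{\smooth}(\F_p) \le \#{\calD'}^{\smooth}(\F_p)$. The gap occurs at the step where you write $\#{\calD'}^{\smooth}(\F_p) = \#\calD^{\smooth}(\F_p) + (\text{extra points over the non-smooth locus of }\calD)$ and abandon the route as ``wrong direction.'' In fact, \emph{decency is precisely what kills those extra points}: any $e \in {\calD'}^{\smooth}(\F_p)$ lifts by Hensel to a point of $C(\Q_p)$, which then reduces into $\calD^{\smooth}(\F_p)$ because $\calD$ is decent, so $f(e) \in \calD^{\smooth}(\F_p)$ — meaning there are no smooth $\F_p$-points of $\calD'$ above the non-smooth locus of $\calD$. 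Since $f$ is an isomorphism over $\calD^{\smooth}$ (take $\calD'$ to be the minimal desingularization of $\calD$), you get a \emph{bijection} ${\calD'}^{\smooth}(\F_p) \to \calD^{\smooth}(\F_p)$, and you were done. This is the paper's argument.

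The replacement you offer instead has a genuine flaw. You claim that ``each residue disk gives a distinct smooth $\F_p$-point of $\calD$,'' which presupposes that reduction to $\calD$ is constant on each residue disk of $\calC$. That is false in general: take $\calD$ to be the blow-up of $\calC$ at a smooth $\F_p$-point $q$ (this $\calD$ is regular, hence decent). The single residue disk over $q$ maps onto $p$ distinct smooth $\F_p$-points on the exceptional $\PP^1$. So there is no well-defined map $\calC^{\smooth}(\F_p) \to \calD^{\smooth}(\F_p)$ of the kind you assert; the correct (and well-defined) map goes the other way, a surjection $\calD^{\smooth}(\F_p) \twoheadrightarrow \calC^{\smooth}(\F_p)$, and establishing that the reduction-to-$\calD$ partition of $C(\Q_p)$ \emph{refines} the reduction-to-$\calC$ partition requires exactly the common model $\calD'$ that you chose to avoid. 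So the ``direct'' argument is circular: it implicitly appeals to what the first approach proves, while skipping the one observation (decency kills the extra points) that makes the first approach work.
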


\begin{proof}
Let $\pi \colon \calE \to \calD$ be the minimal desingularization of $\calD$;
this is an isomorphism above~$\calD^{\smooth}$.
Also, if $e \in \calE^{\smooth}(\F_p)$,
then $e$ is the reduction
of a point in $C(\Q_p)$ by Hensel's lemma,
and $\calD$ is decent, 
so $\pi(e) \in \calD^{\smooth}(\F_p)$.
Thus $\pi$ defines a bijection $\calE^{\smooth}(\F_p) \to \calD^{\smooth}(\F_p)$.
On the other hand, $\calE \to \calC$ factors as a sequence of blow-ups
at closed points \cite{Lichtenbaum1968}*{II.A, Theorem~1.15}, 
and each blow-up morphism is surjective on $\F_p$-points.
\end{proof}

By Lemma~\ref{L:non-minimal model},
to prove Theorem~\ref{T:average number of smooth points} 
it suffices to construct a decent model $\calD$ of each $C \in \calF_g(\Z_p)$
and to prove $\E \#\calD^{\smooth}(\F_p) \le p+1$.
We use the following recursive algorithm to construct $\calD$.

\bigskip

\Algorithm $\MakeDecentModel(C)$.

Input: A curve $C \colon y^2=f(x)$ in $\calF_g(\Z_p)$.
\begin{enumerate}[\upshape 1.]
\item
Let $\calD$ be the standard compactification of $y^2=f(x)$ over $\Z_p$.
\item
Let $U$ be the closed subscheme $y^2=f(x)$ of $\Aff^2_{\Z_p}$
with its open immersion into $\calD$.
\item
Modify $\calD$ by running $\Fix(U)$ below.
\end{enumerate}

\bigskip

\Subroutine $\Fix(U)$.

Input: A closed subscheme $U \colon y^2=h(x)$ of $\Aff^2_{\Z_p}$
with an open immersion into $\calD$.
\begin{enumerate}[\upshape 1.]
\item Replace $\calD$ by its blow-up at the set of non-regular $\F_p$-points
of $U$.
\item For each $c \in \{0,1,\ldots,p-1\}$, 
if $p \mid h'(c)$ and $p^2 \mid h(c)$, 
then let $U_c$ be $y^2=p^{-2} h(c+px)$,
which is an affine patch of the blown-up $\calD$,
and run $\Fix(U_c)$.
(These processes for different $c$ 
may be run independently without interference,
since the special fibers of the $U_c$ have disjoint images in $\calD$.)
\end{enumerate}

\begin{lemma}
\label{L:terminates decently}
Algorithm $\MakeDecentModel(C)$ terminates
and yields a decent model $\calD$ of~$C$.
\end{lemma}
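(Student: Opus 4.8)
The plan is to prove two things about $\MakeDecentModel(C)$: that its recursion tree is finite, and that the resulting $\calD$ is decent; the second part will go by induction along the (now finite) tree, organized residue disk by residue disk.

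\textbf{Termination.} Since each call to $\Fix$ spawns at most $p$ recursive calls, it suffices to bound the depth of each branch. Write $f=\prod_{i=1}^{2g+1}(x-\beta_i)$ with $\beta_i\in\Qbar_p$ (so $v_p(\beta_i)\ge 0$), distinct because $f$ is separable. Along a branch the successive inputs are polynomials $h_0=f,h_1,h_2,\dots$ with $h_{k+1}(x)=p^{-2}h_k(c_k+px)$, and an easy induction gives $h_k=p^{k(2g-1)}\prod_i(x-\rho^{(k)}_i)$ with $\rho^{(k)}_i=(\beta_i-a_k)/p^{k}$, $a_k=c_0+pc_1+\dots+p^{k-1}c_{k-1}$. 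I would attach to $h_k$ the quantity
\[
	\Psi(h_k)\colonequals\sum_{\{i,j\}}v_p\bigl(\rho^{(k)}_i-\rho^{(k)}_j\bigr)+v_p(\operatorname{cont}h_k),
\]
the sum over pairs $\{i,j\}$ with $\rho^{(k)}_i,\rho^{(k)}_j$ both integral; it is $\ge 0$, and $\Psi(h_0)=\tfrac12 v_p(\operatorname{disc}f)$ because $f$ is monic. Suppose $\Fix(h_k)$ makes a recursive call at $c_k$, and let $T$ be the set of $i$ with $\rho^{(k)}_i$ integral and $\equiv c_k\pmod p$. The integral roots of $h_{k+1}$ are exactly the $(\rho^{(k)}_i-c_k)/p$ for $i\in T$, so the first summand of $\Psi$ drops by at least $\binom{\#T}{2}$; and a short computation with contents — using that $h_k/\operatorname{cont}h_k$ is primitive — gives $v_p(\operatorname{cont}h_{k+1})=v_p(\operatorname{cont}h_k)-2+\#T$. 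Hence $\Psi(h_{k+1})\le\Psi(h_k)-\bigl(\tfrac12(\#T)^2-\tfrac32\#T+2\bigr)$, and the parenthesized number is a positive integer for every $\#T\in\Z_{\ge 0}$. So $\Psi$ strictly decreases along each branch, which therefore has length at most $\tfrac12 v_p(\operatorname{disc}f)$; the tree is finite and the algorithm halts. The output $\calD$ is then obtained from the standard compactification $\calD_0$ of $y^2=f(x)$ by finitely many blow-ups at closed points of the special fibre, so it is proper over $\Z_p$ with generic fibre $C$.

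\textbf{Decency.} It remains to show every $P\in C(\Q_p)=\calD(\Z_p)$ reduces to a point of $\calD^{\smooth}(\F_p)$. A point with $v_p(x(P))<0$, and the point $\infty$, reduces into the patch $y^2=z(1+a_1z+\dots)$ of $\calD_0$, which is smooth along $z=0$ and is left untouched by the algorithm (every blow-up centre has finite $x$-coordinate); so these are fine. For the rest I would prove, by induction along the recursion tree, the statement: \emph{once $\Fix(U)$ has finished, every $P\in C(\Q_p)$ reducing into $U\colon y^2=h(x)$ reduces to a smooth point of the final special fibre} (the remaining steps of the algorithm only modify $\calD$ over pieces disjoint from the one over $U$, so ``final'' is unambiguous). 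Such a $P$ reduces to some $(c,b)$ with $b^2\equiv h(c)$. If $(c,b)$ is a smooth point of $\{y^2=\bar h(x)\}$ and $c$ triggers no recursive call, then $(c,b)$ is not among the blown-up points and survives as a smooth point. If $(c,b)$ is not smooth and $c$ triggers no recursive call, then expanding $h$ about $c$ and using $p^2\nmid h(c)$ (for $p=2$: $4\nmid h(c)$) forces $v_p(h(x(P)))$ to be odd, or $h(x(P))$ to be a nonsquare mod $4$ — except when $p=2$ and $h(c)\equiv1\pmod4$; in the former cases no such $P$ exists, and in the latter $(c,b)$ is non-regular, Step~1 blows it up, and $P$ lands in the chart in which the strict transform has special fibre $t'^2+t'=(\text{polynomial in }s')$, which is everywhere smooth. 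Finally, if $c$ triggers a recursive call, then $p^2\mid h(x(P))$ and $p\mid y(P)$, so $P$ lifts to $U_c\colon y^2=p^{-2}h(c+px)$ at the point with integral coordinates $\bigl((x(P)-c)/p,\;y(P)/p\bigr)$, and the inductive hypothesis for $\Fix(U_c)$ (which has smaller $\Psi$) applies. Feeding the initial call $\Fix(U_0)$, $U_0\colon y^2=f(x)$, into this statement handles all $P$ with $x(P)\in\Z_p$ and finishes the proof.

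\textbf{Expected main obstacle.} The delicate point should be the decency induction when $p=2$: there the non-smooth locus of the special fibre contains $(c,b)$ whenever $\bar h'(c)=0$, irrespective of $b$, so one must separately treat the residue classes $c$ with $h(c)\not\equiv0\pmod4$ — ruling out rational points reducing there, or checking that the single blow-up of Step~1 already sweeps those rational points into an Artin--Schreier chart with smooth special fibre — and one must confirm that the independent recursive calls $\Fix(U_c)$, living over disjoint pieces of $\calD$, glue to a single decent model. The content computation underlying $v_p(\operatorname{cont}h_{k+1})=v_p(\operatorname{cont}h_k)-2+\#T$ is routine but is the one place where some care is needed in the termination argument.
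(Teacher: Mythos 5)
Your decency argument is essentially the paper's, just packaged as an induction along the recursion tree rather than by inspecting the last call of $\Fix$ containing $P$; both carry out the same case analysis on $(\bar h'(c),\bar h(c))$, and your handling of the $p=2$, $h(c)\equiv 1\pmod 4$ blow-up matches the paper's case (2)(a)(ii)(B). That part is fine (conditional on termination having been established first, so the induction is well-founded).

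Your termination argument, however, is genuinely different from the paper's and contains a concrete error. The paper simply observes that an unbounded branch of the recursion produces, by compactness, a $d\in\Z_p$ with $f(d)=f'(d)=0$, contradicting separability; no potential function is needed. You instead attempt an effective bound via the quantity $\Psi$, and the key identity you assert,
\[
v_p(\operatorname{cont}h_{k+1})=v_p(\operatorname{cont}h_k)-2+\#T,
\]
is false. Take $f(x)=x^{2g+1}-p^2$ with $p\nmid 2g+1$. All roots $\beta_i$ have $v_p(\beta_i)=2/(2g+1)<1$, so the algorithm recurses (only) at $c_0=0$, producing $h_1(x)=p^{-2}f(px)=p^{2g-1}x^{2g+1}-1$. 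Here $v_p(\operatorname{cont}h_0)=v_p(\operatorname{cont}h_1)=0$, yet $T=\emptyset$ (no $\rho_i^{(0)}$ satisfies $v_p(\rho_i^{(0)}-0)\ge 1$), so your formula predicts $v_p(\operatorname{cont}h_1)=-2$. The correct value of $v_p\bigl(\operatorname{cont}(g_k(c_k+px))\bigr)$ is the height of the supporting line of slope $-1$ to the Newton polygon of $g_k(c_k+X)$, which is governed by the full list of valuations $v_p(\rho_i^{(k)}-c_k)$, not merely by $\#T$; in this example that height is $2$, not $\#T=0$. Consequently the displayed inequality $\Psi(h_{k+1})\le\Psi(h_k)-\bigl(\tfrac12(\#T)^2-\tfrac32\#T+2\bigr)$ is not established, and with it the claim that $\Psi$ strictly decreases by at least $1$. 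If you want an effective depth bound, a cleaner route is the resultant identity: writing $\operatorname{disc}(f)=a(x)f(x)+b(x)f'(x)$ with $a,b\in\Z_p[x]$ and noting that at depth $n$ one has $v_p(f(d))\ge 2n$ and $v_p(f'(d))\ge n$ gives $n\le v_p(\operatorname{disc}f)$ directly, without any content bookkeeping. As written, the termination half of your proof has a gap.
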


\begin{proof}
If the recursion reaches nesting depth $n$
(where the initial call to $\Fix(U)$ is nesting depth $0$),
then the composition of the changes of variable $x \mapsto c+px$
is of the form $x \mapsto d + p^n x$ for some $d \in \{0,1,\ldots,p^n-1\}$
such that $p^{-2n} f(d + p^n x) \in \Z_p[x]$,
and hence $f(d) \in p^{2n} \Z_p$ and $f'(d) \in p^n \Z_p$.
Thus if the nesting is unbounded,
compactness yields $d \in \Z_p$ such that $f(d)=f'(d)=0$,
contradicting the definition of $\calF_g(\Z_p)$.
Hence the algorithm terminates.

Suppose that $P \in C(\Q_p)$.
If $x(P) \notin \Z_p$, then $P$ reduces to the smooth point $\infty$
on the special fiber of $\calD$.
Otherwise, $P$ belongs to $U(\Z_p)$ for the initial $U$.
Consider the \emph{last} time $\Fix(U)$ is called with a $U$
such that $P \in U(\Z_p)$.
Without loss of generality, make a change of variables $x \mapsto x+c$
to assume that $P$ reduces to a point in $U(\F_p)$ with $x=0$.
Let $a=h(0)$ and $b=h'(0)$.
\begin{enumerate}
\item Suppose $p\nmid b$.
Then $U$ is smooth at the $\F_p$-points with $x=0$
(the $x$-derivative is nonzero).
\item Suppose $p \mid b$. 
  \begin{enumerate}
  \item Suppose $p\nmid a$.
    \begin{enumerate}
    \item Suppose $p\ne 2$.
Then $U$ is smooth at the $\F_p$-points with $x=0$
(the $y$-derivative is nonzero).
    \item Suppose $p=2$.
	Then $U$ is isomorphic to $y^2+2y = (a-1) + bx + \cdots$,
and has a unique $\F_2$-point $u$ with $x=0$.
      \begin{enumerate}
      \item If $a \equiv 3 \pmod{4}$, 
		then $U$ is regular but not smooth at $u$, 
		so $P$ could not have existed.
      \item If $a \equiv 1 \pmod{4}$, then $U$ is not regular at $u$,
so $u$ was blown up in Step~1 of $\Fix(U)$;
then $P$ corresponds to a $\Z_2$-point of the affine patch
$U' \colon y^2+y=a' + b' x + \cdots$ of the blow-up
obtained by making the change of variable $(x,y) \mapsto (2x,2y)$
and dividing by $2^2$;
this entire patch is smooth (the $y$-derivative is nonvanishing).
      \end{enumerate}
    \end{enumerate}
  \item Suppose $p \mid a$ but $p^2 \nmid a$. 
Then $U$ is regular but not smooth at the unique $\F_p$-point with $x=0$
(the origin), so $P$ could not have existed.
  \item Suppose $p^2 \mid a$. 
Then in Step~2 of $\Fix(U)$, we would have called $\Fix(U_0)$
for $U_0$ such that $P \in U_0(\Z_p)$,
contradicting the assumption on $U$.\qedhere
  \end{enumerate}
\end{enumerate}
\end{proof}

Construct $\calD$ by algorithm $\MakeDecentModel$.
For $n \ge 0$, let $\overline{\calH}_n$ be the set 
of polynomials 
\[
	f(x) \colonequals 
	p^{(2g-1)n} x^{2g+1} + p^{(2g-2)n} a_{2g} x^{2g} + \cdots 
	+ p^n a_3 x^3 + a_2 x^2 + a_1 x + a_0
\]
with $a_0,\ldots,a_{2g} \in \Z_p$,
and let $\calH_n$ be the (full measure) subset with nonzero discriminant.
Identify each $f \in \calH_n$ 
with the standard compactification of $y^2=f(x)$ over $\Q_p$;
for example, $\calH_0 \isom \calF_g(\Z_p)$.
Let 
$\overline{\calS}_n \colonequals
   \{ h \in \overline{\calH}_n : p \mid a_1 \textup{ and } p^2 \mid a_0 \}$;
define $\calS_n$ similarly.
The bijection $\overline{\calS}_n \to \overline{\calH}_{n+1}$
sending $h(x)$ to $p^{-2} h(px)$
respects addition, so it respects Haar measure (up to normalization).
Thus, by induction on $n$,
inside a call to subroutine $\Fix(U)$ at nesting depth $n$
arising from a sequence of choices $c_1,\ldots,c_n$
in Step~2 of earlier calls,
the distribution of $h$ is uniform over $\calH_n$.
Let $X_n$ be the random variable on $\calH_n$ that counts the
number of smooth $\F_p$-points of the final $\calD$
lying above $\F_p$-points in this $U$ having $x=0$;
if we replaced $0$ by any other $c \in \{0,1,\ldots,p-1\}$,
the distribution of values would be the same.

\begin{lemma}
\label{L:random variable X_n}
For $n \ge 0$,
the restriction $X_n|_{\calS_n}$ is the sum of $p$ random variables,
each of which has the same distribution of values on $\calS_n$
as $X_{n+1}$ has on $\calH_{n+1}$.
\end{lemma}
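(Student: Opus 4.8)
The plan is to follow the recursive algorithm one level deeper, restricted to what happens over the $\F_p$-point of $U$ with $x=0$. On $\calS_n$ we have $p\mid a_1=h'(0)$ and $p^2\mid a_0=h(0)$. From $p\mid a_0$ the only $\F_p$-point of $U$ over $x=0$ is the origin; and from $p^2\mid a_0$ together with $p\mid a_1$, the defining equation $y^2-h(x)$ lies in $\mm^2$ (in the ambient regular local ring at this point, with $\mm=(p,x,y)$), so the origin is a \emph{non-regular} point of $U$. Hence Step~1 of $\Fix(U)$ blows it up, and since $p\mid h'(0)$ and $p^2\mid h(0)$, the case $c=0$ of Step~2 recurses into $\Fix(U_0)$ with $U_0\colon y^2=p^{-2}h(px)$, which is exactly the chart $(x,y)=(px',py')$ of this blow-up. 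By the measure-preserving bijection $\overline{\calS}_n\to\overline{\calH}_{n+1}$, $h\mapsto p^{-2}h(px)$, recalled just before the lemma, $U_0$ is uniformly distributed over $\calH_{n+1}$ when $h$ is uniform over $\calS_n$.

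The geometric heart of the argument is to show that the smooth $\F_p$-points of the final $\calD$ lying over the origin of $U$ are precisely those lying over the special fiber of $U_0$. All of them lie on the exceptional divisor $E$ of the blow-up of the origin, which is the intersection of the strict transform of $U$ with the exceptional $\PP^2_{\F_p}$ of $\mathrm{Bl}_{(p,x,y)}\Aff^2_{\Z_p}$; a chart computation identifies $E$ as a plane conic whose trace on the chart $U_0$ is the special fiber of $U_0$, and whose remaining (at most two) $\F_p$-points lie in the neighboring chart. Along those remaining points the special fiber of the blown-up surface has two components crossing — the exceptional $E$ and the strict transform of the special fiber of $U$ — so these points are not smooth over $\Z_p$; moreover the rest of the algorithm ($\Fix(U_0)$, the deeper recursion, and the calls for $c\ne0$) blows up only closed points that lie in the chart $U_0$ or in charts disjoint from that locus, so these non-smooth points are never touched and survive in $\calD$. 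Consequently every smooth $\F_p$-point of $\calD$ over the origin lies over the special fiber of $U_0$, and since that special fiber is a closed subscheme of $\Aff^2_{\F_p}$, it lies over some $x'=c'$ with $c'\in\{0,1,\dots,p-1\}$.

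To finish, for $c'\in\{0,\dots,p-1\}$ let $Y_{c'}(h)$ be the number of smooth $\F_p$-points of the final $\calD$ lying over $x'=c'$ in $U_0$; the previous paragraph yields $X_n|_{\calS_n}=\sum_{c'=0}^{p-1}Y_{c'}$, a sum of $p$ random variables. Since the part of $\calD$ over $U_0$ is produced by $\Fix(U_0)$ and thus depends on $h$ only through $U_0$, the bijection $h\mapsto U_0$ carries each $Y_{c'}$ to the random variable on $\calH_{n+1}$ counting smooth $\F_p$-points over $x=c'$, which by the symmetry noted just after the definition of $X_n$ has the same distribution as $X_{n+1}$ (the case $c'=0$). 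Hence each $Y_{c'}$ has the same distribution on $\calS_n$ as $X_{n+1}$ on $\calH_{n+1}$, which is the claim. I expect the middle paragraph to be the main obstacle: one must analyze the blow-up of the non-regular origin carefully enough both to locate which $\F_p$-points of $E$ occur in which chart and to verify that those outside $U_0$ are non-smooth and are never revisited by the recursion; the remainder is bookkeeping with the measure-preserving bijection and the translation symmetry.
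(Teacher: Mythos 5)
Your proof is correct and reaches the same recursive decomposition $X_n|_{\calS_n}=\sum_{c'=0}^{p-1}Y_{c'}$ as the paper, but your middle paragraph takes a genuinely different route to the key fact that every smooth $\F_p$-point of the final $\calD$ lying above the origin of $U$ lies above an $\F_p$-point of $U_0$. You verify this by direct geometry: compute the exceptional curve $E$ of the blow-up of the non-regular origin as a conic in $\PP^2_{\F_p}$, locate its $\F_p$-points with $P=0$ outside the $U_0$-chart, show that at those points $E$ meets the strict transform of $U_{\F_p}$ so that the special fiber has two distinct components through the point (hence it is not smooth over $\Z_p$), and check that the recursion never revisits those points. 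One small imprecision: "two components crossing'' suggests transversality, but when $a_2\equiv0\pmod p$ the conic and the strict transform of $U_{\F_p}$ meet $\{P=0\}$ at the single point $[0:1:0]$ with higher contact (both tangent to the common line $\ell$ of the two surfaces of the special fiber of the blown-up threefold); two distinct components through a point already force non-smoothness, so your conclusion still holds, just say "pass through'' rather than "cross.'' The paper's proof is stated very tersely, and the natural reading is that it instead leans on decency (Lemma~\ref{L:terminates decently}) plus Hensel's lemma: $\calD^{\smooth}(\F_p)$ is exactly the image of $C(\Q_p)\to\calD(\F_p)$, and any $P\in C(\Q_p)$ reducing to the origin of $U$ has $x(P),y(P)\in p\Z_p$ with $(x(P)/p,y(P)/p)\in U_0(\Z_p)$ --- exactly the bookkeeping of case (2)(c) in the proof of Lemma~\ref{L:terminates decently} --- so its reduction lands above $U_0(\F_p)$. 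That argument buys brevity and avoids any analysis of the exceptional conic, its degenerate cases, or which blow-up centers are revisited; yours buys an explicit picture of what $\calD$ looks like over the origin, at the cost of the case analysis you yourself flag as the main obstacle. Your final paragraph (the measure-preserving bijection $\overline{\calS}_n\to\overline{\calH}_{n+1}$ and the translation symmetry among $x'=c'$) matches the paper.
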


\begin{proof}
For $h \in \calS_n$, 
Step~2 of $\Fix(U)$ leads to $U' \colon y^2=p^{-2} h(px)$,
and the points of $\calD^{\smooth}(\F_p)$ lying above points in $U$
with $x=0$ are all those lying above $U'$.
The number of these whose image in $U'$ has a particular
$x$-coordinate in $\F_p$ is distributed like $X_{n+1}$ on $\calH_{n+1}$.
\end{proof}

\begin{lemma}
\label{L:E X_n}
For $n \ge 0$, we have $\E X_n = 1$.
\end{lemma}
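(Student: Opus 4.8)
The plan is to prove $\E X_n = 1$ by a direct computation of the expectation of the number of smooth $\F_p$-points of the final model $\calD$ lying above the slice $x = 0$, stratified according to what happens at the unique $\F_p$-point of $U$ with $x=0$ (after the reduction of $h$). The key dichotomy is whether we are in a terminal case (where $\Fix(U)$ does \emph{not} recurse at $c=0$) or in the recursive case $p^2 \mid a_0$, where Lemma~\ref{L:random variable X_n} applies. In the recursive case, $X_n|_{\calS_n}$ is a sum of $p$ independent copies of a random variable distributed like $X_{n+1}$ on $\calH_{n+1}$, so its conditional expectation on $\calS_n$ is $p \cdot \E X_{n+1}$.

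First I would set up the induction. The bound $N_{\boldL} \le n_{\boldw}+1+\delta(p,n_{\boldw})$ type estimates are not needed here; instead, one needs termination (Lemma~\ref{L:terminates decently}), which shows that the recursion depth is almost surely finite, so that $\E X_n$ is well-defined and the recursion $\E X_n = (\text{terminal contribution}) + \Prob(\calS_n)\cdot p \cdot \E X_{n+1}$ can be ``unrolled.'' The cleanest formulation: prove by descending induction (or by the standard fixed-point argument for such recursions, using $\Prob(\calS_n) \to 0$ implicitly via termination) that $\E X_n = 1$ for all $n$, by checking that the right-hand side of the recursion equals $1$ when $\E X_{n+1} = 1$. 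So the heart of the proof is the \emph{terminal} cases: count, for $h$ uniform over $\calH_n$, the expected number of smooth $\F_p$-points of $\calD$ above $x=0$, restricted to the event that we do \emph{not} fall into the recursive branch $p^2 \mid a_0$.

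The terminal casework mirrors the proof of Lemma~\ref{L:terminates decently}, applied to the $\F_p$-points with $x=0$, i.e.\ to $a \colonequals h(0)$, $b \colonequals h'(0)$. When $p \nmid b$ (probability $1-1/p$): $U$ is smooth along $x=0$, contributing $2$ points if $a$ is a nonzero square mod $p$, $0$ if a nonsquare, and $1$ if $p \mid a$; averaging over $a \in \Z_p$ uniform (independent of $b$), the expected count is exactly $1$. When $p \mid b$ and $p \nmid a$ (probability $\tfrac1p(1-\tfrac1p)$): for $p$ odd, $U$ is smooth with a unique point, contributing $1$; for $p = 2$, the analysis in case (2)(a)(ii) of Lemma~\ref{L:terminates decently} shows the smooth locus contributes $1$ (either the point is regular-non-smooth and no curve point reduces there—but we are counting smooth points of $\calD$, not points hit by $C(\Q_p)$, so one must be careful here: $a \equiv 3 \pmod 4$ gives a regular-non-smooth point, contributing $0$ smooth points; $a \equiv 1 \pmod 4$ blows up to the smooth patch $U'$, which has a unique $\F_2$-point over $x=0$, contributing $1$; averaging the two subcases of equal probability gives $\tfrac12$)—so for $p=2$ this stratum contributes $\tfrac12$ on average, while for odd $p$ it contributes $1$. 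The case $p \mid a$, $p^2 \nmid a$ (probability $\tfrac1p\cdot\tfrac{p-1}{p}$): regular-non-smooth, contributing $0$. Finally $p \mid b$, $p^2 \mid a$ (probability $\tfrac1p \cdot \tfrac1p = \tfrac1{p^2}$, which is exactly $\Prob(\calS_n \mid x=0\text{ slice})$): recursive, contributing $p \cdot \E X_{n+1}$ by Lemma~\ref{L:random variable X_n}. Summing: $\E X_n = (1-\tfrac1p)\cdot 1 + (\tfrac1p - \tfrac1{p^2})\cdot c_p + \tfrac1{p^2}\cdot p\,\E X_{n+1}$, where $c_p = 1$ for odd $p$ and one checks the $p=2$ bookkeeping separately. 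With $\E X_{n+1} = 1$ this should collapse to $1$; for $p$ odd it is immediate, and for $p=2$ the slightly different terminal contributions in the $p\mid b,\,p\nmid a$ stratum must be reconciled—I expect the $p=2$ computation to need the extra smooth point contributed elsewhere in the blow-up to balance, so that the total is still $1$.

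The main obstacle will be the $p=2$ bookkeeping: unlike odd $p$, the ``regular but not smooth'' phenomenon (type $A_1$ singularities from $a \equiv 3 \bmod 4$) and the blow-up producing the smooth patch $y^2 + y = \cdots$ mean that one must carefully track \emph{all} the smooth $\F_2$-points of the resulting $\calD$ lying above the given slice—not merely whether a rational point reduces there—and confirm that the average is exactly $1$ and not merely $\le 1$. Concretely, I would expand $h(0+2x) = a + 2bx + (\text{higher})$ with the higher coefficients uniform and independent, identify precisely which $\F_2$-points of the blow-up lie over the old point, and sum their smoothness contributions; the linear-combination/independence structure of the coefficients (as in Remark~\ref{R:linear combination}'s spirit, though here just elementary) guarantees the relevant averages decouple. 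Once the per-stratum contributions are pinned down, the recursion $\E X_n = 1 \Leftarrow \E X_{n+1} = 1$ closes by the well-foundedness from Lemma~\ref{L:terminates decently}.
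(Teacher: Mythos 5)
Your plan matches the paper's---stratify according to the cases of Lemma~\ref{L:terminates decently}, compute per-stratum expectations, and close the recursion from Lemma~\ref{L:random variable X_n}---but the arithmetic is wrong in two places that prevent the computation from closing. First, the probabilities: the recursive stratum requires both $p\mid b$ and $p^2\mid a$ and hence has probability $\Prob(\calS_n)=p^{-3}$, not $p^{-2}$, while the dead stratum $p\mid b$, $p\mid a$, $p^2\nmid a$ has probability $p^{-2}-p^{-3}$ (you wrote $\tfrac1p\cdot\tfrac{p-1}{p}$, forgetting the condition $p\mid b$). As written, your formula $\E X_n=(1-\tfrac1p)\cdot 1+(\tfrac1p-\tfrac1{p^2})\cdot c_p+\tfrac1{p^2}\cdot p\,\E X_{n+1}$ attaches the contribution $p\,\E X_{n+1}$ (valid only on $\calS_n$) to the combined weight $p^{-2}$ of strata (2)(b) and (2)(c), overcounting by a factor of $p$, and already for odd $p$ with $\E X_{n+1}=1$ it returns $1+\tfrac1p-\tfrac1{p^2}$ rather than the ``immediate'' $1$. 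The correct recursion is $\E X_n = 1 - p^{-2} + p^{-2}\,\E X_{n+1}$. Second, in the case $p=2$, $2\mid b$, $a\equiv1\pmod4$, the blow-up patch $U'\colon y^2+y=h'(x)$ is entirely contracted over the original $\F_2$-point with $x=0$: the substitution $(x,y)\mapsto(2x,2y)$ sends \emph{every} $\F_2$-point of $U'$ to that point, so you must count all of $U'(\F_2)$, not the slice $x=0$. Since $y^2+y\equiv 0$ on $\F_2$, we have $\#U'(\F_2)=2\cdot\#\{x\in\F_2:\bar{h}'(x)=0\}$, with expectation~$2$, not $1$; averaging with the $0$ from $a\equiv3\pmod4$ gives $1$, matching the odd-$p$ stratum, and the ``reconciliation'' you left open disappears.

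There is also a gap at the end. Pointwise termination of the algorithm (Lemma~\ref{L:terminates decently}) does not by itself imply that $\E X_n$ is finite or that the recursion $\E X_n = 1-p^{-2}+p^{-2}\E X_{n+1}$ determines its value; there is no base case for a ``descending induction,'' and a fixed-point argument needs a priori boundedness of $\E X_n$ in $n$. The paper supplies this by proving the tail bound $\Prob(X_n\ge B)=O(B^{-2})$ uniformly in $n$ (iterating the self-similar structure: $X_n\ge B$ forces one of the $p$ copies of $X_{n+1}$ to exceed $B/p$), whence $\E X_n$ is bounded and the unrolled recursion $\E X_n = 1-p^{-2k}+p^{-2k}\E X_{n+k}$ converges to $1$. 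This step should be made explicit. (A smaller inaccuracy: the $p$ copies of $X_{n+1}$ in Lemma~\ref{L:random variable X_n} are generally \emph{not} independent, as the paper notes; linearity of expectation does not care, but your ``$p$ independent copies'' phrasing should be corrected.)
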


\begin{proof}
We divide $\calH_n$ into subsets corresponding to the cases
in the proof of Lemma~\ref{L:terminates decently}.
For each case, we compute its probability,
and the average contribution to $X_n$
conditioned on being in that case:
\begin{center}
  \begin{tabular}{c|cc}
Case & Probability & Average contribution to $X_n$ \\ \hline
(1){\large\strut} & $1-p^{-1}$ & $1$ \\ 
(2)(a){\strut} & $p^{-1}-p^{-2}$ & $1$ \\
(2)(b){\strut} & $p^{-2}-p^{-3}$ & $0$ \\
(2)(c){\strut} & $p^{-3}$ & sum of $p$ copies of $X_{n+1}$. \\
  \end{tabular}
\end{center}
Let us explain the entries in the last column.
In case~(1), the smooth $\F_p$-points of $U$ with $x=0$
correspond to square roots
of a uniformly random element of $\F_p$;
the expected number of square roots is~$1$.
In case~(2)(a) for $p\ne 2$, the contribution is
the expected number of square roots of a random element of $\F_p^\times$,
which is again~$1$.
In case~(2)(a) for $p=2$, in subcase~(A) the contribution is $0$
(we are counting smooth $\F_p$-points),
while in subcase~(B) the average contribution is~$2$
since each point of $\Aff^2(\F_p)$ has a $1/2$ chance of
lying in $U'(\F_p)$;
thus the overall average contribution in case~(2)(a) for $p=2$ is again~$1$.
In case~(2)(b), the contribution is~$0$.
Case~(2)(c) corresponds to $\calS_n$,
so we use Lemma~\ref{L:random variable X_n}.
Moreover, the analysis shows that $X_n \le 4$ outside of case~(2)(c).

The upshot is that $X_n$ is given by a process whose parameters
are independent of $n$;
it would be a Bienaym\'e--Galton--Watson process if the 
$p$~random variables in Lemma~\ref{L:random variable X_n} were independent 
(they are generally not).
For real $B>4$, the only way that $X_n \ge B$ can hold
is if we are in case~(2)(c) and one of the $p$ copies of $X_{n+1}$
exceeds $B/p$;
thus $\Prob(X_n \ge B) \le p^{-3} p \Prob(X_{n+1} \ge B/p)$.
Iterate this $k$ times, where $k$ is the first integer with $B/p^k \le 4$,
to obtain
\[
	\Prob(X_n \ge B) \;\le\; 
	p^{-2k} \Prob(X_{n+k} \ge B/p^k) \;\le\; 
	p^{-2k} \;=\; 
	O(B^{-2}).
\]
Thus the average $\E X_n = \sum_{B=1}^\infty \Prob(X_n \ge B)$
is finite and bounded independently of $n$.
The table implies that
\[
	\E X_n = (1-p^{-1}) \cdot 1 + (p^{-1}-p^{-2}) \cdot 1 + (p^{-2}-p^{-3}) \cdot 0 + p^{-3} (p \cdot \E X_{n+1}).
\]
By induction on $k$, 
we obtain $\E X_n = 1 - p^{-2k} + p^{-2k} \cdot \E X_{n+k}$.
Taking the limit as $k \to \infty$ yields $\E X_n = 1$.
\end{proof}

\begin{lemma}
\label{L:average of D^smooth(F_p)}
We have $\E \#\calD^{\smooth}(\F_p) = p+1$.
\end{lemma}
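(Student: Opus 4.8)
The goal is to compute $\E \#\calD^{\smooth}(\F_p)$ exactly by summing the contributions of the various pieces of the special fiber of $\calD$. The special fiber of the standard compactification has two sources of $\F_p$-points: the point $\infty$ at infinity, and the $\F_p$-points of the affine patch $U\colon y^2=f(x)$. The point $\infty$ always lies in $\calD^{\smooth}$ (since $\deg f = 2g+1$ is odd, the curve is smooth there, and blowing up never affects it), contributing exactly $1$. For the affine part, I would partition the $\F_p$-points of $U$ according to their $x$-coordinate $c \in \{0,1,\ldots,p-1\}$, and count, for each $c$, the smooth $\F_p$-points of the final model $\calD$ lying over $\F_p$-points of $U$ with $x = c$; by the symmetry noted just before Lemma~\ref{L:random variable X_n} (the change of variable $x \mapsto x+c$), each of these $p$ quantities has the same distribution as $X_0$ on $\calH_0 \isom \calF_g(\Z_p)$.

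\begin{proof}
The special fiber $\calD_{\F_p}$ has $\F_p$-points of two kinds: the point $\infty$ coming from the standard compactification, and $\F_p$-points of the affine chart $U \colon y^2 = f(x)$. Blowing up in $\Fix$ only ever touches points of the affine chart (it modifies $\calD$ at non-regular $\F_p$-points of the various $U$, all of which lie in the affine locus), so $\infty$ survives in the final model; moreover the standard compactification is smooth at $\infty$ because $\deg f = 2g+1$ is odd. Hence $\infty$ contributes exactly $1$ to $\#\calD^{\smooth}(\F_p)$, deterministically.

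For the affine contribution, partition the relevant $\F_p$-points of the final $\calD$ according to the $x$-coordinate in $\F_p$ of their image in the original chart $U$. For $c \in \{0,1,\ldots,p-1\}$, let $Y_c$ be the number of points of $\calD^{\smooth}(\F_p)$ lying above $\F_p$-points of $U$ with $x = c$. Then
\[
	\#\calD^{\smooth}(\F_p) = 1 + \sum_{c=0}^{p-1} Y_c .
\]
As observed in the text preceding Lemma~\ref{L:random variable X_n}, applying the change of variable $x \mapsto x+c$ (which preserves the uniform distribution on $\calF_g(\Z_p) = \calH_0$) identifies the distribution of $Y_c$ with that of $X_0$ on $\calH_0$. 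Therefore $\E Y_c = \E X_0 = 1$ by Lemma~\ref{L:E X_n}.

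Summing, $\E \#\calD^{\smooth}(\F_p) = 1 + p \cdot \E X_0 = 1 + p = p+1$, as claimed.
\end{proof}

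**Main obstacle.** The one point requiring care is the claim that the final $\calD$ introduces no smooth $\F_p$-points beyond those accounted for by $\infty$ and the $Y_c$'s — i.e.\ that every smooth $\F_p$-point of the final model lies over an $\F_p$-point of the original affine chart or equals $\infty$. This follows because each blow-up morphism is surjective on $\F_p$-points (so the image of any $\F_p$-point of the final model in the standard compactification is a genuine $\F_p$-point there), and the exceptional fibers of the blow-ups in $\Fix(U_c)$, after the change of variables $x \mapsto c + px$, sit over the point $x \equiv c$ of the \emph{parent} chart; inductively these all lie over $\F_p$-points of the original $U$. The bookkeeping that the $Y_c$ exactly capture these — with no double-counting across different $c$, which is guaranteed by the parenthetical remark in Step~2 of $\Fix$ that the special fibers of the $U_c$ have disjoint images in $\calD$ — is routine but is the place where one must be most attentive.
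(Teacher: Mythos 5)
Your proof is correct and follows essentially the same route as the paper's: decompose $\#\calD^{\smooth}(\F_p)$ into the contribution of $\infty$ (always $1$) plus, for each $c\in\F_p$, the smooth points lying over $x=c$ in the initial affine chart, then use the translation symmetry $x\mapsto x+c$ to identify each of those $p$ contributions in distribution with $X_0$ and invoke Lemma~\ref{L:E X_n}. The extra discussion in your ``main obstacle'' paragraph is sound and makes explicit a completeness-of-decomposition point the paper leaves implicit.
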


\begin{proof}
First, $\calD$ has the smooth $\F_p$-point $\infty$ on
the standard compactification.
The other points of $\calD^{\smooth}(\F_p)$ lie above the initial $U$.
The average number of these lying above $\F_p$-points in $U$ with $x=c$ 
is independent of $c \in \F_p$,
so it equals its value for $c=0$,
which by definition is $\E X_0$,
which is $1$ by Lemma~\ref{L:E X_n}.
Thus $\E \#\calD^{\smooth}(\F_p) = 1 + \sum_{c \in \F_p} 1 = p+1$.
\end{proof}

This completes the proof of Theorem~\ref{T:average number of smooth points}.

\begin{remark}
One can show that there is a positive probability that 
the morphism $\calE \to \calC$ 
in the proof of Lemma~\ref{L:non-minimal model} involves
blowing up a smooth $\F_p$-point;
thus $\E \#\calC^{\smooth}(\F_p) < p+1$.
\end{remark}

\begin{remark}
On the other hand, at least for $p>2$,
one can show that a random $C \in \calF_g$ \emph{with good reduction} 
has $\E \#\calC^{\smooth}(\F_p) = p+1$.
(This is because all such $C$ arise from a curve in $\calF_g$
with discriminant in $\Z_p^\times$ by a substitution $x \mapsto d+p^n x$
for a uniquely determined $n \ge 0$ and $d \in \{0,1,\ldots,p^n-1\}$,
and the expected value of $C(\F_p)$ for $C \in \calF_g(\F_p)$ is $p+1$,
as one sees by grouping each $C$ with its quadratic twist.)
Thus one has the counterintuitive fact that on average,
curves with bad reduction have fewer smooth $\F_p$-points
than curves with good reduction!
\end{remark}

\begin{remark}
The argument proving Theorem~\ref{T:average number of smooth points}
proves $\E \#\calC^{\smooth}(\F_p) \le p+1$
also for the random nice curve over $\Q_p$ given by
\[
	y^2 = a_{2g+2} x^{2g+2} + \cdots + a_0
\]
for $a_0,\ldots,a_{2g+2} \in \Z_p$
such that the discriminant is nonzero.
\end{remark}

\begin{corollary}
\label{C:average image of rholog}
For $C \in \calF_g(\Z_p)$,
the average size of $\rholog(C(\Q_p))$ is at most 
\[
	\begin{cases}
		6g + 9, &\textup{if $p=2$,} \\
		\dfrac{p^2-p}{p-2}(2g-2) + (p+1)^2, &\textup{if $p>2$.} \\
	\end{cases}
\]
\end{corollary}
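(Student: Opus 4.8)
The plan is to combine the pointwise bound of Proposition~\ref{P:image of general C} with the expectation bound of Theorem~\ref{T:average number of smooth points} via linearity of expectation. Concretely, first I would observe that for every $C \in \calF_g(\Z_p)$, writing $d \colonequals \#\calC^{\smooth}(\F_p)$ for the minimal proper regular model $\calC$ of $C$, Proposition~\ref{P:image of general C} gives
\[
	\#\rholog(C(\Q_p)) \le
	\begin{cases}
		5d + 6g - 6, &\textup{if $p=2$,} \\
		(p+1)d + \dfrac{p^2-p}{p-2}(2g-2), &\textup{if $p>2$.} \\
	\end{cases}
\]
The key point is that in each case the right-hand side is an affine-linear function of the single random quantity $d = \#\calC^{\smooth}(\F_p)$, the other terms being constants depending only on $g$ and $p$. (One should note in passing that $\#\rholog(C(\Q_p))$ is indeed a measurable function of $C$ on $\calF_g(\Z_p)$: it is dominated by the integrable random variable $d$, and by Proposition~\ref{P:locally constant image of rholog} it is even locally constant off a measure-zero set.)

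Next I would take the expectation over a random $C \in \calF_g(\Z_p)$. By linearity of expectation, $\E\,\#\rholog(C(\Q_p)) \le 5\,\E d + (6g-6)$ when $p=2$, and $\E\,\#\rholog(C(\Q_p)) \le (p+1)\,\E d + \frac{p^2-p}{p-2}(2g-2)$ when $p>2$. Now I would invoke Theorem~\ref{T:average number of smooth points}, which gives $\E d = \E\,\#\calC^{\smooth}(\F_p) \le p+1$; for $p=2$ this says $\E d \le 3$. Substituting: for $p=2$ we get $5\cdot 3 + 6g - 6 = 6g+9$, and for $p>2$ we get $(p+1)(p+1) + \frac{p^2-p}{p-2}(2g-2) = (p+1)^2 + \frac{p^2-p}{p-2}(2g-2)$, as claimed.

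Honestly, there is no real obstacle here: the corollary is a formal consequence of the two results already in hand, and the only thing to be careful about is the measurability/integrability remark above and making sure the arithmetic of plugging $\E d \le p+1$ into the affine bound is done correctly (using $\E d \le 3$, not $\E d \le p+1 = 3$ written for general $p$, in the $p=2$ case). I would keep the write-up to a couple of sentences.
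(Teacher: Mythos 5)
Your proof is correct and follows exactly the paper's one-line argument: combine Proposition~\ref{P:image of general C} with Theorem~\ref{T:average number of smooth points} via linearity of expectation, substituting $\E\,\#\calC^{\smooth}(\F_p)\le p+1$ (so $\le 3$ when $p=2$) into the affine bound. The added remarks on measurability and on being careful to use $\E d\le 3$ in the $p=2$ case are sensible but not strictly necessary for the paper's level of rigor.
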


\begin{proof}
Combine Proposition~\ref{P:image of general C}
and Theorem~\ref{T:average number of smooth points}.
\end{proof}

\begin{remark} \label{R:better rholog bound}
  Working with residue disks defined in terms of decent models and making use
  of the fact that $\rholog \circ \iota = \rholog$, where $\iota$ is the
  hyperelliptic involution, one could improve this to $3g + 9/2$ for $p = 2$ and
  \[ \frac{p^2-p}{p-2}(g-1) + \frac{(p+1)^2}{2} \]
  for $p>2$. We omit the details, since they are somewhat technical
  and yield only a modest improvement in our main results.
\end{remark}


\section{The main results}

\subsection{Positive density for \texorpdfstring{$g \ge 3$}{g at least 3}}

\begin{lemma}
\label{L:log for good reduction}
Let $\calJ$ be an abelian scheme over $\Z_p$.
Define $\log \colon \calJ(\Q_p) \to \Q_p^g$ 
by integrating a $\Z_p$-basis of $\HH^0(\calJ,\Omega^1_{\calJ/\Z_p})$.
If $\calJ(\F_p)[p]=0$ and $\calJ(\Q_p)[p]=0$, then $\log(\calJ(\Q_p))=(p\Z_p)^g$.
\end{lemma}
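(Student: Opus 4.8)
The plan is to reduce the statement to a computation on the formal group of $\calJ$ at the identity section over $\Z_p$. Since $\calJ$ is an abelian scheme over $\Z_p$, its completion along the zero section is a formal group $\widehat{\calJ}$ of dimension $g$ over $\Z_p$, and the kernel of reduction $\calJ(\Q_p) \to \calJ(\F_p)$ is identified with $\widehat{\calJ}(p\Z_p)$, a subgroup of finite index in $\calJ(\Q_p)$. First I would recall the standard fact (as in the theory of formal groups over complete local rings with residue characteristic $p$) that for a formal group of dimension $g$ over $\Z_p$, the formal logarithm $\log_{\widehat{\calJ}}$ gives an isomorphism of topological groups $\widehat{\calJ}(p\Z_p) \xrightarrow{\sim} (p\Z_p)^g$ — convergence holds on $p\Z_p$ since the denominators in the logarithm series have $p$-adic valuation growing more slowly than the input valuation contributes. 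Moreover this formal logarithm agrees, up to the chosen identification $T_0\calJ \isom \Q_p^g$, with the restriction of the analytic $\log$ of Section~\ref{S:logarithm}, because both are obtained by integrating the same invariant $1$-forms $\omega_1,\dots,\omega_g$, which form a $\Z_p$-basis of $\HH^0(\calJ,\Omega^1_{\calJ/\Z_p})$ and hence a basis of the invariant differentials on $\widehat{\calJ}$.

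The key point is then to control the index: I would show that under the hypotheses $\calJ(\F_p)[p]=0$ and $\calJ(\Q_p)[p]=0$, the reduction map induces an isomorphism on $p$-torsion-free quotients in the right way, so that $\log(\calJ(\Q_p))$ coincides with $\log$ of the kernel of reduction, which is $(p\Z_p)^g$. Concretely: $\log$ kills exactly $\calJ(\Q_p)_{\tors}$, and $\log(\calJ(\Q_p)) = \Z_p^g$ for \emph{some} identification; the question is whether the given $\Z_p$-basis of $\HH^0(\calJ,\Omega^1_{\calJ/\Z_p})$ is the ``integrally normalized'' one, i.e. whether it produces image exactly $(p\Z_p)^g$ rather than some larger lattice. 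The hypothesis $\calJ(\F_p)[p]=0$ forces $\calJ(\F_p)$ to have order prime to $p$ (so reduction identifies the pro-$p$ part of $\calJ(\Q_p)$ with the kernel of reduction $\widehat{\calJ}(p\Z_p)$), and the hypothesis $\calJ(\Q_p)[p]=0$ together with $\log$ being a local diffeomorphism shows there is no extra $p$-power torsion to worry about; combining these, $\calJ(\Q_p) \isom \Z_p^g \times A$ with $A$ finite of order prime to $p$, and the $\Z_p^g$ part is precisely $\widehat{\calJ}(p\Z_p)$, on which $\log = \log_{\widehat{\calJ}}$ surjects onto $(p\Z_p)^g$.

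The main obstacle I anticipate is the normalization issue: verifying that the $\Z_p$-basis of $\HH^0(\calJ,\Omega^1_{\calJ/\Z_p})$ — an \emph{integral} object — is exactly what makes the formal logarithm hit $(p\Z_p)^g$ on the nose, as opposed to some commensurable lattice. This should follow from the fact that the formal group law of $\widehat{\calJ}$ has coefficients in $\Z_p$ and that the chosen $\omega_i$ restrict to $dx_i + (\text{higher order})$ in suitable formal coordinates (the standard normalization of invariant differentials on a formal group over a $p$-adic ring), so that $\log_{\widehat{\calJ}}(x) = x + (\text{terms with } \Z_p\text{-integral-up-to-}p\text{-denominators})$ and hence carries $(p\Z_p)^g$ isomorphically onto $(p\Z_p)^g$. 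Once this integrality bookkeeping is in place, the rest is the diagram-chase with torsion described above, and the conclusion $\log(\calJ(\Q_p)) = (p\Z_p)^g$ follows.
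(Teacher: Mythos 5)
Your reduction to the formal group of $\calJ$ along the zero section, the identification of the kernel of reduction with $\widehat{\calJ}(p\Z_p)$, and the use of $\calJ(\F_p)[p]=0$ to split off a prime-to-$p$ finite quotient are all sound and in the same spirit as the paper's argument. The gap is in the central claim that $\log_{\widehat{\calJ}}$ carries $\widehat{\calJ}(p\Z_p)$ isomorphically onto $(p\Z_p)^g$. Stated as a ``standard fact'' without hypotheses, this is false at $p=2$ --- the prime the paper actually needs: for $\widehat{\G}_m$ over $\Z_2$, the element $-1 \in 1+2\Z_2$ is $2$-torsion (so $\log$ is not injective on the level-$1$ filtration step) and $\log(1+2\Z_2)=4\Z_2\neq 2\Z_2$ (so the image is not $(p\Z_p)^g$). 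Your proposed fix via integrality of the logarithm series does not repair this at $p=2$: the degree-$2$ term of $\log_{\widehat{\calJ}}(x)$ has valuation at least $2\cdot 1 - v_2(2)=1$ when $v_2(x)=1$, i.e.\ \emph{equal} to $v_2(x)$, so you cannot conclude $\log(x)\equiv x\pmod{p^2}$ for $x\in(p\Z_p)^g$, and the image is not forced to be $(p\Z_p)^g$ by this kind of estimate.

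The hypothesis $\calJ(\Q_p)[p]=0$ does restore injectivity of $\log$ on $\widehat{\calJ}(p\Z_p)$, but that still leaves the image as \emph{some} rank-$g$ $\Z_p$-lattice, and for $g>1$ there are many lattices sandwiched between $(p^2\Z_p)^g$ and $(p\Z_p)^g$ with the right index. What is missing is an index computation on the filtration $K_e := \ker\bigl(\calJ(\Z_p)\to\calJ(\Z/p^e\Z)\bigr)=\widehat{\calJ}\bigl((p^e\Z_p)^g\bigr)$: (a) injectivity of $\log$ on $K_1$ gives $(\log K_{e-1}:\log K_e)=(K_{e-1}:K_e)=p^g$ for $e\ge 2$; (b) the formal identity $[p]x = px+O(x^2)$ gives $pK_{e-1}\subseteq K_e$, hence $p\log K_{e-1}\subseteq \log K_e$; (c) each $\log K_e$ is a free $\Z_p$-module of rank $g$. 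From (a)--(c) one deduces $\log K_{e-1}=p^{-1}\log K_e$; the estimate you had in mind does establish $\log K_e=(p^e\Z_p)^g$ for $e\gg 0$, and reverse induction then gives $\log K_1=(p\Z_p)^g$. Finally $(\calJ(\Q_p):K_1)$ divides $\#\calJ(\F_p)$, which is prime to~$p$, so $\log(\calJ(\Q_p))=(p\Z_p)^g$. Without some version of this bookkeeping, your argument has a hole exactly where it matters.
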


\begin{proof}
Let $F$ be the formal group of $\calJ$ over $\Z_p$.
For $e \ge 1$, 
define 
\[ K_e \colonequals F((p^e\Z_p)^g) = \ker(\calJ(\Z_p) \to \calJ(\Z/p^e\Z)). \]
Since $K_1$ is a pro-$p$-group and $\calJ(\Q_p)[p]=0$,
we have $(K_1)_{\tors}=0$.
Thus $\log|_{K_1}$ is injective.
Hence for $e \ge 2$,
\[
	(\log K_{e-1}:\log K_e) = (K_{e-1}:K_e) = ((p^{e-1}\Z_p)^g : (p^e\Z_p)^g) = p^g. 
\]
Also, $p \log K_{e-1} \subseteq \log K_e$.  
On the other hand, $\log K_e$ is a compact open subgroup of $\Q_p^g$,
and hence a free $\Z_p$-module of rank $g$.
The previous three sentences show that $\log K_{e-1} = p^{-1} \log K_e$.

We prove that $\log K_e = (p^e \Z_p)^g$ for all $e \ge 1$,
by reverse induction on $e$.
For large $e$, we have $\log K_e = (p^e \Z_p)^g$ 
since the derivative of the composition
$(p\Z_p)^g \To \calJ(\Q_p) \stackrel{\log}\To \Q_p^g$
at~$0$ is invertible over $\Z_p$.
The previous paragraph lets us pass from $e$ to $e-1$.

In particular, $\log K_1=(p\Z_p)^g$.
Since $(\calJ(\Q_p):K_1) = \#\calJ(\F_p)$, which is prime to $p$,
we have $\log(\calJ(\Q_p)) = (p\Z_p)^g$ as well.
\end{proof}

\begin{lemma}
\label{L:rholog of size 1}
For each $g>1$, there exists $C \in \calF_g \setminus Z$
such that $\#\rholog(C(\Q_2))=1$.
\end{lemma}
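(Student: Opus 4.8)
The plan is to exhibit an explicit curve. I take $C_0\in\calF_g$ to be the standard compactification of $y^2=x^{2g+1}+2^{4g}x+5\cdot 2^{4g}$. Completing the square and rescaling identifies $C_0$, over $\Q_2$, with $E\colon y^2+y=\phi(x)$, where $\phi(x)\colonequals x^{2g+1}+x+1$. The affine equation of $E$ defines a smooth $\Z_2$-scheme (the $y$-derivative is $2y+1$, a unit modulo $2$), which together with the point at infinity gives a proper smooth $\Z_2$-model; so $C_0$ has good reduction at $2$, the reduction being the smooth curve $y^2+y=\phi(x)$ over $\F_2$. Since $\phi(0)=\phi(1)=1$ and $y^2+y=1$ has no $\F_2$-solution, this reduction has no affine $\F_2$-point, so $\#\calC_0^{\smooth}(\F_2)=1$, the unique point being $\bar\infty$, and hence $C_0(\Q_2)$ is a single residue disk $D$ (around $\bar\infty$). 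Finally, the $2$-adic Newton polygon of $f(x)\colonequals x^{2g+1}+2^{4g}x+5\cdot 2^{4g}$ is the segment from $(0,4g)$ to $(2g+1,0)$, whose slope has denominator $2g+1$ because $\gcd(4g,2g+1)=1$; so $f$ is irreducible over $\Q_2$, the absolute Galois group of $\Q_2$ permutes the roots of $f$ transitively, and therefore $J_0(\Q_2)[2]=0$.

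Next I would check that $C_0\notin Z$. A torsion point $P\in C_0(\Q_2)$ reduces to $\bar\infty$, so $P-\infty$ lies in the kernel of reduction on the (good) N\'eron model of $J_0$, which is a pro-$2$-group; if $P-\infty\ne 0$ had order $2^a$, then $2^{a-1}(P-\infty)$ would be a nonzero element of $J_0(\Q_2)[2]=0$. Hence $P=\infty$ is a Weierstrass point, so $C_0\in\calF_g\setminus Z$, and it remains to prove $\#\rholog(C_0(\Q_2))=\#\rholog(D(\Q_2))=1$.

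To compute $\rholog(D(\Q_2))$ I would work with the $\Z_2$-basis $\omega_i\colonequals x^{i-1}\,dx/(2y+1)$ ($1\le i\le g$) of $\HH^0(\calC_0,\Omega^1_{\calC_0/\Z_2})$. In a uniformizer $t\colonequals x^g/y$ at $\bar\infty$, write $\omega_i=w_i(t)\,dt$ with $w_i\in\Z_2[\![t]\!]$; then, exactly as in the proof of Proposition~\ref{P:sigmaC_on_residue_disk}, $\rholog$ on $D(\Q_2)\cong 2\Z_2$ is the map $\tau\mapsto\rho(\boldl(2\tau))$ with $l_i(t)=\sum_{n}\frac{w_{i,n}}{n+1}t^{n+1}$. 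Reducing modulo $2$ gives $\bar\omega_i=x^{i-1}\,dx$; the Weierstrass semigroup at $\bar\infty$ is $\langle 2,2g+1\rangle$, so $\bar\omega_i$ vanishes there to order $2(g-i)$, and a short computation with the expansion $x=t^{-2}+t^{2g-1}+(\text{higher order terms})$ over $\F_2$ shows that $\bar\omega_g/dt$ has nonzero constant term and vanishing $t^1$-coefficient. Thus $w_{g,0}\in\Z_2^\times$, $w_{g,1}\in 2\Z_2$, and $w_{i,0},w_{i,1}\in 2\Z_2$ for $i<g$. Tracking $2$-adic valuations, in $l_g(2\tau)$ the $n=0$ term dominates and the factor $w_{g,0}+w_{g,1}\tau+\cdots$ is a unit (this uses that $w_{g,1}$ is even), so $v_2(l_g(2\tau))=1+v_2(\tau)$, whereas $v_2(l_i(2\tau))\ge 2+v_2(\tau)$ for $i<g$. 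Hence $\rho(\boldl(2\tau))=[0:\cdots:0:1]$ for every $\tau\in\Z_2\setminus\{0\}$, so $\rholog(D(\Q_2))$ is the single point $[0:\cdots:0:1]$, completing the proof.

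I expect the valuation bookkeeping of the last step to be the main obstacle, and within it the need to know that $w_{g,1}$ is even. In characteristic $0$ this is automatic: averaging a uniformizer against the hyperelliptic involution gives $t$ with $\iota^*t=-t$, forcing every $w_i$ to be an even power series. In characteristic $2$, however, $\iota$ acts trivially on the cotangent space at a Weierstrass point, so this shortcut fails and one must instead invoke the even-order vanishing of holomorphic differentials at $\bar\infty$ — equivalently, carry out the $t$-expansion of $x$ for the specific curve $E$, which is where the concrete choice $\phi=x^{2g+1}+x+1$ enters. The other ingredients — good reduction, the single residue disk, irreducibility of $f$ over $\Q_2$, and $C_0\notin Z$ — are routine.
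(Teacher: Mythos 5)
Your construction is the same curve as the paper's: completing the square and rescaling takes $y^2 = x^{2g+1}+2^{4g}x+5\cdot 2^{4g}$ to $y^2+y=x^{2g+1}+x+1$, and the Newton-polygon irreducibility argument, the observation that there is a single residue disk, and the power-series expansion of $\boldl$ at $\bar\infty$ all parallel the paper (you use the uniformizer $t=x^g/y$ and the basis $x^{i-1}dx/(2y+1)$ where the paper uses $t=y/x^{g+1}$, $s=1/x$ and $s^{j-1}\omega_1$, but these agree up to units and a reversal of index, and your $\F_2$-reduction argument for the key vanishing of $w_{g,1}$ and of $w_{i,0},w_{i,1}$ for $i<g$ is sound). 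Your argument that $C_0\notin Z$ — a torsion point reducing to $\bar\infty$ lies in the pro-$2$ kernel of reduction, hence would produce a nonzero element of $J_0(\Q_2)[2]=0$ — is clean and a bit different from the paper, which reads the same conclusion off its explicit computation of $\log$.

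There is, however, a genuine gap. The map $\rholog$ of Section~\ref{S:logarithm} is defined only after choosing $\omega_1,\dots,\omega_g$ so that $\log(J(\Q_2))=\Z_2^g$ (equivalently, a $\Z_2^\times$-multiple of $\Z_2^g$, since $\rho$ is scale-invariant). You compute $\rho(\boldl(2\tau))$ in the coordinates given by the integral basis $\omega_i=x^{i-1}\,dx/(2y+1)$, but you never verify that this basis yields $\log(J_0(\Q_2))$ equal to a scalar multiple of $\Z_2^g$. If $\Lambda:=\log(J_0(\Q_2))$ were, say, $\Z_2\times(2\Z_2)^{g-1}$, then the correct $\rho$ must be taken with respect to a basis of $\Lambda$, and the image of $\log$ restricted to $D(\Q_2)$ could project to several points of $\PP^{g-1}(\F_2)$ even though in your (na\"ive) coordinates the dominant entry is always the $g$-th one — the parity of $l_i(2\tau)/4$ for $i<g$ does vary with $\tau$, so this is not a vacuous concern. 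To close the gap one must verify, in addition to $J_0(\Q_2)[2]=0$, that $\calJ_0(\F_2)[2]=0$ (for instance: the hyperelliptic involution on $\calC_{0,\F_2}$ is $(x,y)\mapsto(x,y+1)$, which fixes only $\bar\infty$, so the $2$-rank of $\Jac(\calC_{0,\F_2})$ is $0$), and then invoke Lemma~\ref{L:log for good reduction} to conclude $\log(J_0(\Q_2))=(2\Z_2)^g$ for your basis. The paper devotes a full paragraph to exactly this point; once it is added, your argument is complete.
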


\begin{proof}
Let $C \in \calF_g$ be a curve isomorphic to $y^2 + y = x^{2g+1} + x + 1$.
Completing the square yields a new equation for $C$ of the form $y^2=f(x)$.
The $2$-adic Newton polygon of $f$ is a single line segment from 
$(0,-2)$ to $(2g+1,0)$, which has no interior lattice points,
so $f$ is irreducible over $\Q_2$.
Elements of $J(\overline{\Q}_2)[2]$ correspond to 
partitions of the set of zeros of $f$ into two parts, 
but only the trivial partition is $\Gal(\Qbar_2/\Q_2)$-invariant,
so $J(\Q_2)[2]=0$.

The curve $C$ has good reduction at $2$.
Let $\calC$ and $\calJ$ be the smooth proper models of $C$ and $J$ over $\Z_2$.
The hyperelliptic involution of $\calC_{\F_2}$ is $(x,y) \mapsto (x,y+1)$,
which fixes only $\infty$;
this implies that $\calJ(\Fbar_2)[2]=0$,
so $\calJ(\F_2)[2]=0$.
Lemma~\ref{L:log for good reduction} implies
that a basis of $\HH^0(\calJ,\Omega^1_{\calJ/\Z_2})$
defines $\log$ such that $\log(J(\Q_2)) = (2\Z_2)^g$.
We may divide by $2$ if desired to make $\log(J(\Q_2)) = \Z_2^g$,
but this will not change the map $\rholog$.

The change of variable $s \colonequals 1/x$ and $t \colonequals y/x^{g+1}$
rewrites the equation of $\calC$ as 
\[ t^2 + s^{g+1} t = s^{2g+2} + s^{2g+1} + s, \]
with $\infty$ corresponding to $(0,0)$ in the new model.
Let 
\[ \omega_1 \colonequals \frac{dt}{-(g+1)s^g t + (2g+2)s^{2g+1} + (2g+1)s^{2g} + 1}, \]
the denominator being a partial derivative of the curve equation.
Then $\omega_j \colonequals s^{j-1} \omega_1$ for $j=1,\ldots,g$
form a basis for $\HH^0(\calC,\Omega^1_{\calC/\Z_2})$.
We have $\calC(\F_2)=\{\infty\}$, so there is just one residue disk,
and $t$ is a uniformizer for it.
Expanding in power series in $t$,
we find $s=t^2+t^{2g+3}+\cdots$, 
$\omega_1 = (1 + 0t + \cdots) \, dt$,
and $\omega_j = (t^{2j-2} + \cdots) \, dt$
for $j=1,\ldots,g$,
with all coefficients in $\Z_2$.
Integrating yields
\[
	\boldl(t) = (t + 0t^2 + \cdots, t^3/3 + \cdots, \cdots, t^{2g-1}/(2g+1) + \cdots).
\]
After removing the common factor of $t$, 
for $t \in 2\Z_2$ each summand on the right is in $2\Z_2$
except the initial $1$; thus $\rho(\boldl(t)) = (1:0:\cdots:0)$
for all nonzero $t \in 2\Z_2$.
Hence $\rholog(C(\Q_2))=\{(1:0:\cdots:0)\}$.
The computation shows also that the only zero of $\log$ on $C(\Q_2)$
is $\infty$, so $C(\Q_2) \intersect J(\Q_2)_{\tors} = \{\infty\}$,
so $C \notin Z$.
\end{proof}

\begin{theorem}
\label{T:positive density}
Fix $g \ge 3$.
The lower density of the set of curves $C \in \calF_g$ 
satisfying $C(\Q) = C(\Q_2) \intersect \overline{J(\Q)} = \{\infty\}$
is positive.
\end{theorem}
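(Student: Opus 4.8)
The plan is to produce a single trivializing congruence class $U \subseteq \calF_g(\Z_2)$ satisfying the hypotheses of Proposition~\ref{P:general density result} with $\#I = 1$, and then to deduce the theorem directly from that proposition, using that $(1+1)\,2^{1-g} = 2^{2-g} < 1$ precisely when $g \ge 3$.

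First I would invoke Lemma~\ref{L:rholog of size 1} to obtain a curve $C_0 \in \calF_g \setminus Z$ with $\#\rholog(C_0(\Q_2)) = 1$; its coefficient tuple is a point $m_0 \in \calF_g(\Z_2)$ with $m_0 \notin Z$. By Proposition~\ref{P:odd torsion on curves} the set $Z$ is closed in $\calF_g(\Z_2)$, so $m_0$ has an open neighborhood disjoint from $Z$, which by Lemma~\ref{L:disjoint union} is a disjoint union of trivializing congruence classes; pick one, $U_0 \ni m_0$, with $U_0 \intersect Z = \emptyset$. On $U_0 \setminus Z = U_0$ the subset $\rholog(\CC_m(\Q_2)) \subseteq \PP^{g-1}(\F_2)$ is locally constant in $m$ by Proposition~\ref{P:locally constant image of rholog}. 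Since congruence classes form a basis for the topology of $\calF_g(\Z_2)$, and every sub-congruence-class of a trivializing congruence class is again trivializing (restrict the trivialization), I can shrink $U_0$ to a trivializing congruence class $U \ni m_0$ with $U \intersect Z = \emptyset$ and $\rholog(\CC_m(\Q_2))$ constant for $m \in U$; its constant value $I$ is then $\rholog(C_0(\Q_2))$, so $\#I = 1$. (Here one uses that $\#\rholog(\CC_m(\Q_2))$ is insensitive to the normalized choice of basis of $\HH^0$ defining $\log$, since changing such a basis replaces $\rholog$ by its composition with a projective linear automorphism of $\PP^{g-1}(\F_2)$; this is what lets the count from Lemma~\ref{L:rholog of size 1} be read in the universal-family setup.)

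Next I would apply Proposition~\ref{P:general density result}(i) with $p=2$ to this $U$: for $C \in \calF_g \intersect U$ outside a subset of relative upper density at most $(1 + \#I)\,2^{1-g} = 2^{2-g}$, one has $C(\Q) = C(\Q_2) \intersect \overline{J(\Q)} = \{\infty\}$. For $g \ge 3$ we have $2^{2-g} \le 1/2 < 1$, so the set of good curves has relative lower density at least $1 - 2^{2-g} > 0$ inside $\calF_g \intersect U$. Since $U$ is a congruence class, $\mu(\calF_g \intersect U)$ equals the positive Haar measure of $U$, and multiplying shows that the set of good curves has lower density at least $(1 - 2^{2-g})\,\mu(\calF_g \intersect U) > 0$ in $\calF_g$, which is the theorem.

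The substantive content is all upstream — the Chabauty reformulation of Proposition~\ref{P:rational points are odd}, the Bhargava--Gross equidistribution input packaged into Proposition~\ref{P:general density result}, the geometric input of Theorem~\ref{T:torsion on generic hyperelliptic curve}, and the explicit curve of Lemma~\ref{L:rholog of size 1} — so, given these, the theorem is a short deduction. The only real care needed, and hence the ``main obstacle,'' is the bookkeeping in the second paragraph: matching the basis and trivialization conventions of the universal family with the concrete computation underlying Lemma~\ref{L:rholog of size 1}, and checking that the successive shrinkings of $U$ preserve both the trivializing property and disjointness from $Z$.
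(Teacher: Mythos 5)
Your proof is correct and follows the same route as the paper: both identify a trivializing congruence class $U$ containing the curve of Lemma~\ref{L:rholog of size 1}, arrange via Proposition~\ref{P:locally constant image of rholog} and closedness of $Z$ (Proposition~\ref{P:odd torsion on curves}) that $\rholog(\CC_m(\Q_2))$ is a fixed singleton on $U$, and then apply Proposition~\ref{P:general density result} with $\#I=1$ to get relative upper density at most $2\cdot 2^{1-g}<1$ for $g\ge 3$. You simply spell out in more detail the shrinking of $U$ and the basis/trivialization bookkeeping that the paper's one-sentence proof leaves implicit.
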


\begin{proof}
  Proposition~\ref{P:locally constant image of rholog}
  yields $U \subseteq \calF_g(\Z_p)\setminus Z$ 
  containing the curve of Lemma~\ref{L:rholog of size 1}
  and satisfying the hypothesis of 
  Proposition~\ref{P:general density result}.
By Proposition~\ref{P:general density result}, 
$C(\Q) = C(\Q_2) \intersect \overline{J(\Q)} = \{\infty\}$
for $C \in \calF_g \intersect U$ outside a subset of relative upper density 
  at most $2 \cdot 2^{1-g} < 1$.
\end{proof}

\begin{remark}
The density of~$U$ will be rather small, so the lower bound on the lower
density of curves with just one rational point we obtain in this way
will also be very small. 
\end{remark}

\begin{remark}
\label{R:3-Selmer}
Although Theorem~\ref{T:positive density} says nothing for $g=2$, 
$\Eq_2(3)$ would imply that the lower density
of the set of $C \in \calF_2$ satisfying $C(\Q)=\{\infty\}$ is positive.
This implication can be proved by a similar argument,
using curves $3$-adically close to $y^2 = x^5 - x^3 - 1$.
\end{remark}

\subsection{Density tending to 1}

\begin{theorem} \label{T:main}
Fix $g > 1$. 
Then the lower density of the set of curves $C \in \calF_g$ 
satisfying $C(\Q) = C(\Q_2) \intersect \overline{J(\Q)} = \{\infty\}$
is at least $1 - (12g + 20) 2^{-g}$.
\end{theorem}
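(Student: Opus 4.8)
The plan is to combine the average bound on $\#\rholog(C(\Q_2))$ from Corollary~\ref{C:average image of rholog} with the equidistribution input $\Eq_g(2)$ (Theorem~\ref{T:Eq(g,2)}) via the general density result, Proposition~\ref{P:general density result}. The key point is that $\calF_g(\Z_2)$ decomposes (up to measure zero) into trivializing congruence classes $U$ on each of which $\rholog(\CC_m(\Q_2))$ is \emph{constant}, equal to some $I=I_U$, by Proposition~\ref{P:locally constant image of rholog} (using that we discard the measure-zero set $Z$). On each such $U$, Proposition~\ref{P:general density result}(i) says that, for $C \in \calF_g \intersect U$ outside a set of relative upper density at most $(1+\#I_U)2^{1-g}$, we get $C(\Q)=C(\Q_2)\intersect\overline{J(\Q)}=\{\infty\}$. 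So the upper density of the ``bad'' curves is at most
\[
	\sum_U \mu(U)\,(1+\#I_U)\,2^{1-g}
	= 2^{1-g}\Bigl(\sum_U \mu(U) + \sum_U \mu(U)\,\#I_U\Bigr)
	= 2^{1-g}\bigl(1 + \E\,\#\rholog(C(\Q_2))\bigr),
\]
where the last equality uses $\sum_U \mu(U)=1$ (Remark~\ref{R:total measure 1}) and the fact that $\E\,\#\rholog(C(\Q_2))$ is exactly the $\mu$-weighted average of the locally constant function $\#I_U$.

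First I would make the decomposition rigorous: by Lemma~\ref{L:disjoint union} applied to $\calF_g(\Z_2)\setminus Z$, and Proposition~\ref{P:locally constant image of rholog}, write $\calF_g(\Z_2)\setminus Z$ as a disjoint union of trivializing congruence classes $U$ on each of which $\rholog(\CC_m(\Q_2))$ is constant; since $\mu(Z)=0$ (Proposition~\ref{P:odd torsion on curves}), these have total measure $1$. Then I would plug $p=2$ into Corollary~\ref{C:average image of rholog}, giving $\E\,\#\rholog(C(\Q_2)) \le 6g+9$. Substituting,
\[
	2^{1-g}\bigl(1 + \E\,\#\rholog(C(\Q_2))\bigr)
	\le 2^{1-g}(6g+10)
	= (12g+20)\,2^{-g},
\]
so the bad curves have upper density at most $(12g+20)2^{-g}$, and the complement has lower density at least $1-(12g+20)2^{-g}$, which is the claim.

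The one technical point that needs care — and is the main thing to get right — is the interchange of the sum over the infinitely many $U$ with the averaging: one wants to conclude that the \emph{upper} density of the global bad set is at most the weighted sum of the relative upper densities over the $U$. I would handle this by truncation: for any $\eps>0$, pick finitely many classes $U_1,\dots,U_k$ with $\sum_{i\le k}\mu(U_i) > 1-\eps$; on the finite union $\calU \colonequals \bigsqcup_{i\le k} U_i$ the bad set has upper density at most $2^{1-g}\sum_{i\le k}\mu(U_i)(1+\#I_{U_i}) \le 2^{1-g}(1+\E\,\#\rholog(C(\Q_2)))$, while the curves in $\calF_g\setminus\calU$ contribute at most $\eps$ to the upper density; letting $\eps\to 0$ gives the bound. (Here I use that a finite disjoint union of congruence classes is itself a union of congruence classes, so densities behave additively on it, and that $\#I_{U_i}\le \E\,\#\rholog(C(\Q_2))\cdot(\text{something})$ is not needed — rather, $\sum_i \mu(U_i)\#I_{U_i}$ is a partial sum of the nonnegative series whose total is $\E\,\#\rholog(C(\Q_2))$.) Everything else is a direct citation of the results already established, so there is no serious obstacle beyond this bookkeeping.
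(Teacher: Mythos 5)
Your proof is correct and follows the same route as the paper: decompose $\calF_g(\Z_2)\setminus Z$ into trivializing congruence classes on which $\rholog(\CC_m(\Q_2))$ is constant, apply Proposition~\ref{P:general density result}(i) on each, and sum using Corollary~\ref{C:average image of rholog} at $p=2$ to get $2^{1-g}\bigl(1 + (6g+9)\bigr) = (12g+20)2^{-g}$. The paper leaves the interchange of an infinite sum with the $\limsup$ implicit; your truncation argument via a finite subfamily of total measure $>1-\eps$ is precisely how to make that step rigorous, so your write-up is a slightly more careful version of the same proof.
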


\begin{proof}
Apply Proposition~\ref{P:general density result}
to each congruence class in Remark~\ref{R:total measure 1},
and sum the results by using Corollary~\ref{C:average image of rholog}
for $p=2$:
the upper density of curves \emph{not} satisfying the condition is
at most $\left(1 + (6g+9) \right) 2^{1-g} = (12g+20) 2^{-g}$.
\end{proof}

\begin{remark}
We have $1 - (12g + 20) 2^{-g} > 0$ if and only if $g \ge 7$.
Also, $1 - (12g + 20) 2^{-g} \to 1$ as $g \to \infty$.
Using the refinement given in Remark~\ref{R:better rholog bound},
the bound could be improved to $1 - (6g + 11) 2^{-g}$,
which is positive also for $g = 6$.
\end{remark}

\begin{theorem} \label{T:dens_odd}
  Fix $g>1$ and an odd prime $p$.
  Assume $\Eq_g(p)$. 
Then the lower density of the set of curves $C \in \calF_g$ 
satisfying $C(\Q) = \{\infty\}$
is at least
  \[ 
     1 - \Bigl(
  1 + (p+1)^2 + \dfrac{p^2-p}{p-2}(2g-2)
    \Bigr) p^{1-g}.
  \]
\end{theorem}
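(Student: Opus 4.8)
The plan is to run the proof of Theorem~\ref{T:main} essentially verbatim, replacing the prime~$2$ by~$p$ and replacing part~(i) of Proposition~\ref{P:general density result} by part~(ii); the hypothesis $\Eq_g(p)$ is exactly what makes part~(ii) available. All the genuine content has already been assembled upstream, so this proof is mostly bookkeeping.

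First I would invoke Remark~\ref{R:total measure 1}, which combines Lemma~\ref{L:disjoint union} with Proposition~\ref{P:locally constant image of rholog}, to obtain a countable disjoint family of trivializing congruence classes $U \subseteq \calF_g(\Z_p)$ of total measure~$1$, each avoiding~$Z$ and satisfying the hypothesis of Proposition~\ref{P:general density result}: on each such $U$ the set $\rholog(\CC_m(\Q_p)) \subseteq \PP^{g-1}(\F_p)$ is constant, say equal to~$I_U$. For each~$U$, Proposition~\ref{P:general density result}(ii) (using $\Eq_g(p)$) shows that the set of $C \in \calF_g \intersect U$ with $C(\Q) \ne \{\infty\}$ has relative upper density at most $(1 + \#I_U)\,p^{1-g}$.

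Next I would sum these local estimates over the infinitely many classes by truncation. Given $\eps > 0$, choose finitely many classes $U_1,\dots,U_k$ with $\sum_i \mu(U_i) > 1 - \eps$; since $\mu(\calF_g \intersect U_i)$ equals the Haar measure of~$U_i$ and density is finitely additive, the curves in $\calF_g$ lying in none of the~$U_i$ form a set of density at most~$\eps$. On each~$U_i$ the curves with $C(\Q) \ne \{\infty\}$ contribute at most $(1 + \#I_{U_i})\,p^{1-g}\,\mu(U_i)$ to the absolute upper density, and bounding $\sum_i \#I_{U_i}\,\mu(U_i) \le \sum_{\text{all }U} \#I_U\,\mu(U) = \E\,\#\rholog(C(\Q_p))$ together with Corollary~\ref{C:average image of rholog} for $p>2$ shows that the upper density of $C \in \calF_g$ with $C(\Q) \ne \{\infty\}$ is at most
\[
	\eps + \Bigl(1 + (p+1)^2 + \frac{p^2-p}{p-2}(2g-2)\Bigr)\,p^{1-g}.
\]
Letting $\eps \to 0$ and subtracting from~$1$ yields the claimed lower density bound.

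The only delicate point---handled exactly as in the $p=2$ case---is this passage from per-class estimates to a single global estimate despite there being infinitely many congruence classes; the truncation above does it, using that the classes have total measure~$1$ and that $\#\rholog(C(\Q_p))$ is nonnegative with finite average by Corollary~\ref{C:average image of rholog}. The one structural difference from Theorem~\ref{T:main} is that for odd~$p$ one cannot also conclude $C(\Q_p) \intersect \overline{J(\Q)} = \{\infty\}$: Weierstrass points defined over~$\Q_p$ but not over~$\Q$ may survive in the intersection (cf.\ the final remark of Section~\ref{S:general density result}), which is why the statement asserts only $C(\Q) = \{\infty\}$.
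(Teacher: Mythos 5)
Your proposal is correct and follows the paper's own argument exactly: the paper proves this theorem by ``repeating the proof of Theorem~\ref{T:main}'' with part~(ii) of Proposition~\ref{P:general density result} in place of part~(i) and with the odd-$p$ bound from Corollary~\ref{C:average image of rholog}, which is precisely your plan. Your fleshing-out of the truncation step for summing over infinitely many congruence classes, and your note that only $C(\Q)=\{\infty\}$ (not the full local statement) survives for odd~$p$, are both accurate elaborations of what the paper leaves implicit.
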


\begin{proof}
Repeat the proof of Theorem~\ref{T:main}, using the bound for
  odd~$p$ in Proposition~\ref{P:image of general C}.
\end{proof}

\begin{corollary}
Fix $g \ge 4$.
Assume that $\Eq_g(p)$ holds for arbitrarily large primes~$p$.
Then the set of curves~$C \in \calF_g$ 
satisfying $C(\Q) = \{\infty\}$ has density~$1$.
\end{corollary}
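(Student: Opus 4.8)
The plan is to apply Theorem~\ref{T:dens_odd} for a sequence of primes $p \to \infty$ and pass to the limit. Write $S \colonequals \{C \in \calF_g : C(\Q) = \{\infty\}\}$, and note that $S$ is a single subset of $\calF_g$ that does not depend on any choice of auxiliary prime. By hypothesis there are infinitely many primes $p$ for which $\Eq_g(p)$ holds, and for each such $p$ Theorem~\ref{T:dens_odd} gives that the lower density of $S$ is at least
\[
	1 - \Bigl(1 + (p+1)^2 + \frac{p^2-p}{p-2}(2g-2)\Bigr) p^{1-g}.
\]
Since the lower density of $S$ is a fixed real number not depending on $p$, it is at least the supremum of these bounds over all primes $p$ with $\Eq_g(p)$.

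Next I would estimate the subtracted term as $p \to \infty$ (with $g$ fixed). Polynomial division gives $\frac{p^2-p}{p-2} = p+1+\frac{2}{p-2}$, so the quantity in parentheses equals $p^2 + 2gp + O(1)$; multiplying by $p^{1-g}$, the whole subtracted term is $p^{3-g} + 2g\,p^{2-g} + O(p^{1-g})$. Because $g \ge 4$ we have $3-g \le -1$, so this tends to $0$. Hence the supremum of the bounds above is $1$, and therefore the lower density of $S$ is at least $1$. Since the upper density is always at most $1$, the lower and upper densities of $S$ both equal $1$, so $\mu(S)$ exists and equals $1$, as desired.

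There is essentially no obstacle here beyond the asymptotic bookkeeping: the corollary is just a limiting version of Theorem~\ref{T:dens_odd}. The one point worth flagging is that the hypothesis $g \ge 4$ is exactly what forces the error term $p^{3-g}+\cdots$ to decay; for $g=3$ this term is $1+o(1)$, which is why the statement is restricted to $g \ge 4$, and the assumption that $\Eq_g(p)$ holds for \emph{arbitrarily large} $p$ (rather than a single prime) is what lets us take the supremum rather than merely one of the bounds.
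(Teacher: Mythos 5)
Your proof is correct and is precisely the argument the paper uses: the paper's one-line proof observes that the lower bound in Theorem~\ref{T:dens_odd} tends to~$1$ as $p \to \infty$, and your asymptotic bookkeeping (the subtracted term being $p^{3-g} + 2g\,p^{2-g} + O(p^{1-g}) \to 0$ for $g \ge 4$) simply spells out why. Both the identification of $g\ge 4$ as the threshold and the remark that ``arbitrarily large'' is what licenses taking a supremum are accurate and match the paper's intent.
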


\begin{proof}
The lower bound in Theorem~\ref{T:dens_odd} tends to $1$ as $p \to \infty$.
\end{proof}

\begin{remark}
For $g=3$, the lower bound tends to $0$ from below,
but using Remark~\ref{R:better rholog bound}
to cut the subtracted term essentially in half,
we would obtain a limit of $1/2$.
Thus if $\Eq_3(p)$ holds for arbitrarily large primes~$p$,
then the set of curves~$C \in \calF_3$
satisfying $C(\Q) = \{\infty\}$
has lower density at least~$1/2$.
\end{remark}

\begin{remark}
\label{R:100 percent}
Several authors have presented heuristics or conditional proofs
that suggest that in an algebraic family of curves of genus greater than~$1$,
the density of those having rational points other than points that
exist generically is $0$: 
see \cite{Poonen-Voloch2004}*{Conjecture~2.2}, 
\cite{Poonen2006-heuristic}, 
\cite{Granville2007}*{Conjecture~1.3(ii)}, 
and \cite{Stoll2009-preprint}*{Conjecture~1}.
\end{remark}


\subsection{Effectivity} \label{S:Eff}

Let $\calF_g^{\good}$ be the set of $C \in \calF_g$
such that $\sigma$ is injective,
the images $\rholog(C(\Q_2))$ and $\PP\sigma(\Sel_2 J)$ are disjoint,
and $C(\Q_2)$ contains no nontrivial torsion point of odd order.
Our proof of Theorem \ref{T:positive density} (resp., Theorem~\ref{T:main})
can be summarized as follows:
\begin{itemize}
\item if $C \in \calF_g^{\good}$, then Chabauty's method at the prime $2$
proves that $C(\Q)=\{\infty\}$;
\item $\calF_g^{\good}$ has positive lower density if $g \ge 3$
(resp., density at least $1 - (12g + 20) 2^{-g}$ for each $g>1$).
\end{itemize}

\begin{theorem}
\label{T:algorithm}
There is an algorithm that takes as input
an integer $g>1$ and a curve $C \in \calF_g$
and decides whether or not $C \in \calF_g^{\good}$.
\end{theorem}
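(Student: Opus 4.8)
The plan is to verify that each of the three defining conditions of $\calF_g^{\good}$ is individually decidable, given that $C$ is specified by a tuple of integers $(a_1,\ldots,a_{2g+1})$ with nonzero discriminant. The overall algorithm then runs the three sub-decisions and outputs ``yes'' exactly when all three succeed. The key observation making everything effective is that $2$-adic computations can be carried out to any finite precision, and that each relevant object is either finite or is a $p$-adic analytic object that is determined by finitely many of its coefficients together with an explicit error bound.

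First I would address the injectivity of $\sigma \colon \Sel_2 J \to \F_2^g$. The $2$-Selmer group of a hyperelliptic Jacobian with a rational Weierstrass point is computable: one works inside $L^\times/(L^\times)^2$ where $L = \Q[x]/(f(x))$, using the standard descent via the $x - T$ map, intersecting local conditions at the finitely many bad primes and at the archimedean place; this is classical (Schaefer, Stoll, etc.) and terminates. Once a finite set of generators for $\Sel_2 J$ and an explicit model of $J(\Q_2)/2J(\Q_2)$ are in hand, computing $\sigma$ and checking injectivity is a finite linear-algebra problem over $\F_2$. Here I use that $\log$ and the identification $J(\Q_2)/2J(\Q_2)_{\tors} \isom \F_2^g$ are given by integrating an explicit basis of differentials on an explicit regular model, which can be computed $2$-adically to sufficient precision.

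Next, to decide whether $\rholog(C(\Q_2))$ and $\PP\sigma(\Sel_2 J)$ are disjoint, I would compute both sets as explicit finite subsets of $\PP^{g-1}(\F_2)$ and compare them. The set $\PP\sigma(\Sel_2 J)$ is finite and is obtained from the previous step. The set $\rholog(C(\Q_2))$ is finite by Proposition~\ref{P:image of general C}, and more importantly it is \emph{effectively} computable: one first computes the minimal proper regular model $\calC$ of $C$ over $\Z_2$ (algorithms for this exist, e.g.\ via repeated blow-up and normalization as in the proof of Lemma~\ref{L:terminates decently}, or one may use the decent model $\calD$ produced by $\MakeDecentModel$), enumerates the finitely many residue disks $D \in \calD$, and on each disk applies the $2$-adic Weierstrass preparation analysis underlying Proposition~\ref{P:integral}: one computes the Newton polygon data $n_{\boldw}$, hence the bound $N_{\boldL}$, replaces the power series by polynomials up to a unit, and then uses the finite subdivision procedure of the remark following Proposition~\ref{P:s&r-image} to list $\rho(\boldl(2\Z_2))$ exactly. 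Since $n_D$ and $N_{\boldL}$ are explicitly bounded, the subdivision terminates after finitely many steps, with a computable halting criterion (the image stabilizes once $\rho\circ\phi$ is constant on each piece).

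Finally, deciding whether $C(\Q_2)$ contains a nontrivial torsion point of odd order is a finite check: by Lemma~\ref{L:rholog of size 1}-type reasoning the computation of $\log$ on each residue disk already locates $\ker(\log)|_{C(\Q_2)} = C(\Q_2) \cap J(\Q_2)_{\tors}$, and one tests each such point (there are finitely many, being zeros of the explicit power series $\boldl$ on each disk, detectable via Newton polygons) for whether it is a Weierstrass point and whether it has odd order; alternatively one directly computes the finite group $J(\Q_2)_{\tors}$ and intersects with $C(\Q_2)$. The main obstacle I expect is not any single conceptual point but the bookkeeping of $2$-adic precision: one must certify that every ``is this quantity zero'' test is decided correctly, which requires a priori bounds on the precision loss in (i) computing the regular model, (ii) integrating the differentials, and (iii) the Weierstrass factorizations. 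Each of these bounds is available in the literature or follows from the explicit Newton-polygon estimates in Section~\ref{S:curves under reduction}, so assembling them into a single terminating algorithm is routine but somewhat lengthy; I would present it by reducing to these known sub-algorithms and citing the relevant effectivity results rather than reproving them.
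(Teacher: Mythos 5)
Your high-level decomposition (torsion check, Selmer/$\sigma$ computation, $\rholog(C(\Q_2))$ computation) matches the paper's, and the Selmer step via descent is fine. But there is a genuine gap in your proposed computation of $\rholog(C(\Q_2))$, and it sits exactly at the spot you dismiss as ``routine bookkeeping.''\ You want to replace $\boldl$ by polynomials $\boldf$ via Weierstrass preparation and then certify that $\rho\circ\phi$ has stabilized on each piece after finite subdivision. The trouble is that the coefficients of $\boldf$ are only known to finite $2$-adic precision, and evaluating $\phi=(f_1:\cdots:f_g)$ in $\PP^{g-1}(\Q_2)$ requires stripping off the common power of $2$ in $(f_1(\tau),\ldots,f_g(\tau))$, which blows up as $\tau$ approaches a common zero of the $f_i$ --- precisely the points where $\boldl(\tau)=\boldzero$, that is, where the corresponding point of $C(\Q_2)$ is torsion. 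Your proposed halting criterion (``the image stabilizes'') cannot be certified from truncated data near those points: you cannot distinguish ``approaching a common zero'' from ``already determined'' without either exact knowledge of the zero or an a priori precision bound, and you supply neither.

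The paper takes a visibly different route exactly to handle this: it first computes $\calT=C(\Q_2)\cap J(\Q_2)_{\tors}$ \emph{exactly} as a finite set of algebraic points via~\cite{Poonen2001-torsion}, working over $\Q_2'=\Qbar\cap\Q_2$ so these points have finite descriptions, and around each $T\in\calT$ explicitly removes a factor of $t$ from the local expansion $\boldl$ (legitimate because $\log$ vanishes to order exactly one at a torsion point), producing a nowhere-vanishing series whose reduction can be certified constant on an explicit neighborhood of~$T$. On the compact complement of these neighborhoods, where $\log$ is bounded away from $\boldzero$, the algorithm tries precision $e=1,2,\ldots$ on each fiber of $\calC(\Z_2)\to\calC(\Z/2^e\Z)$ until it succeeds, and it is \emph{compactness}, not an explicit bound, that guarantees termination. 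This ``increase precision until it works'' loop is the crux of rigor here, and it is not part of your sketch. Your plan could be repaired along these lines (compute $\calT$ exactly first, factor the corresponding common zeros out of $\boldf$, and then subdivide the remaining nowhere-vanishing map), but as written the central difficulty is elided rather than resolved.
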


\begin{proof}
One such algorithm (not an especially efficient one) proceeds as follows.
Let $\Q_2'$ be the subfield of $\Q_2$
consisting of elements algebraic over $\Q$.
The advantage of $\Q_2'$ over $\Q_2$ from the algorithmic point of view
is that an element of $\Q_2'$ can be specified exactly with a finite
amount of data.
On the other hand, $\Q_2'$ approximates $\Q_2$ well
in the sense that $C(\Q_2')$ is dense in $C(\Q_2)$
and $J(\Q_2')$ is dense in $J(\Q_2)$.

First, use \cite{Poonen2001-torsion} to compute
the finite set $C(\Qbar) \intersect J(\Qbar)_{\tors}$.
Check each of its points for membership in $C(\Q_2)$ 
to compute $\calT \colonequals C(\Q_2) \intersect J(\Q_2)_{\tors}$.
Since $J(\Q_2)_{\tors} \subseteq J(\Q_2')$, we have $\calT \subseteq C(\Q_2')$.
If any $T \in \calT$ is of odd order greater than $1$, then return ``no''.

Next, use \cite{Stoll2001} to
compute the group $\Sel_2 J$, the group $J(\Q_2)/2J(\Q_2)$, 
and the map between them.
Compute $J(\Q_2)_{\tors}$ and its image in $J(\Q_2)/2J(\Q_2)$.
If the map 
$\sigma \colon \Sel_2 J \to J(\Q_2)/(2J(\Q_2)+J(\Q_2)_{\tors}) \isom \F_2^g$
is not injective,
then return ``no''.
Otherwise compute $\PP\sigma(\Sel_2 J)$.

Finally, we need to compute $\rholog(C(\Q_2))$.
For any $P \in C(\Qbar_2)$ and $\omega \in \HH^0(C,\Omega^1)$,
one can define the $2$-adic integral $\int_{\infty}^P \omega \in \Qbar_2$
(see \cite{McCallum-Poonen2012}*{Section~5.1}, for example).
For a divisor $D = \sum n_P P \in \Div C_{\Qbar_2}$, 
define $\int^D \omega \colonequals \sum n_P \int_{\infty}^P \omega$.
Given $D \in \Div C_{\Q'_2} \injects \Div C_{\Qbar_2}$ and $\omega$ as above,
the integral $\int^D \omega \in \Q_2$ can be computed
to any desired precision
by integrating formal power series and using the group law on $J$
(see~\cite{McCallum-Poonen2012}*{Section~8.3}, for example).

Choose degree~$0$ divisors $D_1,\ldots,D_g$ on $C_{\Q'_2}$
representing an $\F_2$-basis for $J(\Q_2)/(2J(\Q_2)+J(\Q_2)_{\tors})$.
Choose any $\Q$-basis $\omega'_1,\ldots,\omega'_g$
for $\HH^0(C,\Omega^1)$, say $\omega'_j \colonequals x^{j-1}\,dx/y$.
Compute the integrals $\int^{D_i} \omega'_j \in \Q_2$ to sufficient precision
that we can find a new basis $\omega_1,\ldots,\omega_g$ of $\HH^0(C,\Omega^1)$
guaranteed to make the matrix $(\int^{D_i} \omega_j)$ lie in $\GL_g(\Z_2)$.
This new basis defines a homomorphism $\log$ as in Section~\ref{S:logarithm}.

For each $T \in \calT$,
choose a uniformizer $t$ on $C_{\Q_2'}$ at $T$.
For each sufficiently small value of $t$ in $\Q_2$,
let $P_t$ be the corresponding point of $C(\Q_2)$ near $T$.
Each coordinate of $P_t$ is a power series in $\Q_2'[[t]]$
that can be calculated in the sense 
that any desired coefficient can be calculated.
The same is true for the power series 
$\ell_i(t) \colonequals \int_\infty^{P_t} \omega_i$
for each $i$.
Each $\ell_i(t)$ vanishes at $t=0$ since $T$ is torsion,
but some $\omega_i$ is nonvanishing at $T$
so the $\ell_i(t)$ do not all vanish to order~$2$.
Using Hensel's lemma, we can control the rate of convergence 
of all these power series
in order to compute an explicit open and closed neighborhood of $0$ in $\Q_2$
on which $\rho(t^{-1} \ell_1(t),\ldots,t^{-1} \ell_g(t))$
converges and is constant.
Then $\rholog$ is constant on 
the corresponding explicit neighborhood of $T$ in $C(\Q_2)$.
Such a neighborhood can be specified explicitly as a fiber
of the map $C(\Q_2) = \calC(\Z_2) \to \calC(\Z/2^e\Z)$ for some $e$,
where $\calC$ is an explicit proper $\Z_2$-model of $C$,
say the Weierstrass model.
Let $U$ be the union of these neighborhoods as $T$ varies;
thus we know $\rholog(U)$.

On $C(\Q_2)\setminus U$, $\log$ is bounded away from $\boldzero$.
Therefore, for any $P \in C(\Q_2) \setminus U$,
the value $\rholog(P)$ can be computed,
and the computation examines only finitely many $2$-adic digits
of the coefficients of $P$;
in other words, the computation succeeds with the same result
for all points in a fiber of $C(\Q_2) = \calC(\Z_2) \to \calC(\Z/2^e\Z)$ 
for some $e$.
Since $C(\Q_2) \setminus U$ is compact, it can be covered 
by finitely many such fibers.
For $e=1,2,\ldots$ in turn,
attempt to calculate $\rholog(P)$ for one point $P$ in each nonempty fiber
of $C(\Q_2)\setminus U \to \calC(\Z/2^e\Z)$
using only the precision specified by the image of $P$ in 
$\calC(\Z/2^e\Z)$.
We may fail for the first few $e$,
but the compactness argument guarantees that eventually
a successful $e$ will be found,
and then we know $\rholog(C(\Q_2)\setminus U)$.

Taking the union of $\rholog(U)$ and $\rholog(C(\Q_2)\setminus U)$
yields $\rholog(C(\Q_2))$.
Return ``yes'' or ``no''
according to whether 
$\rholog(C(\Q_2))$ and $\PP\sigma(\Sel_2 J)$ are disjoint.
\end{proof}

\begin{corollary}
\label{C:effective}
There is an algorithm based on Chabauty's method at the prime $2$ 
that succeeds in determining $C(\Q)$ 
for a computable set of curves $C \in \calF_g$
of lower density at least the bound in Theorem~\ref{T:positive density}
or Theorem~\ref{T:main} for any $g \ge 3$.
\end{corollary}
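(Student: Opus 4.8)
The plan is to run the decision procedure of Theorem~\ref{T:algorithm} and then invoke the implications already in place. Given a pair $(g,C)$ with $g \ge 3$ and $C \in \calF_g$, the algorithm first applies Theorem~\ref{T:algorithm} to decide whether $C \in \calF_g^{\good}$; if the answer is ``yes'', it outputs $C(\Q) = \{\infty\}$, and if it is ``no'', it declares failure. The ``computable set of curves'' promised in the statement is then simply $\calF_g^{\good}$, which is decidable by Theorem~\ref{T:algorithm}.

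First I would justify correctness in the ``yes'' case, which is exactly where the Chabauty-at-$2$ content enters. If $C \in \calF_g^{\good}$, then $\sigma$ is injective and $\rholog(C(\Q_2))$ and $\PP\sigma(\Sel_2 J)$ are disjoint, so Proposition~\ref{P:rational points are odd} gives $C(\Q_2) \intersect \overline{J(\Q)} \subseteq J(\Q_2)[2']$; the remaining defining property of $\calF_g^{\good}$---that $C(\Q_2)$ contains no nontrivial torsion point of odd order---collapses this to $C(\Q_2) \intersect \overline{J(\Q)} = \{\infty\}$, and since $C(\Q) \subseteq C(\Q_2) \intersect \overline{J(\Q)}$ under the embedding $C \injects J$, we conclude $C(\Q) = \{\infty\}$. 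All of this is precisely the computation of $C(\Q_2) \intersect \overline{J(\Q)}$ inside $J(\Q_2)$ along the maps of~\eqref{E:big}, i.e.\ the reformulated Chabauty method at the prime~$2$, so the algorithm is indeed ``based on Chabauty's method at the prime $2$''.

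Next I would verify the density statement. The key observation is that the proofs of Theorem~\ref{T:positive density} and Theorem~\ref{T:main} already establish the claimed lower bounds for $\calF_g^{\good}$ itself, not merely for the a priori larger set of $C$ with a single rational point. Indeed, in the proof of Proposition~\ref{P:general density result}(i) the curves $C \in \calF_g \intersect U$ that are shown to satisfy $C(\Q) = \{\infty\}$ are exactly those for which $\sigma$ is injective, $\PP\sigma(\Sel_2 J)$ is disjoint from $\rholog(C(\Q_2))$, and---after discarding the density-zero sets of Proposition~\ref{P:torsion on curves over Q} and Corollary~\ref{C:odd torsion on curves}---$C(\Q_2)$ carries no non-Weierstrass torsion point, hence no nontrivial odd-order torsion point; such curves lie in $\calF_g^{\good}$. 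Consequently $\calF_g \setminus \calF_g^{\good}$ is contained in the exceptional loci appearing in those proofs, and summing over the congruence classes of Remark~\ref{R:total measure 1}---over the single class $U$ containing the curve of Lemma~\ref{L:rholog of size 1} in the case of Theorem~\ref{T:positive density}, and over all of them using Corollary~\ref{C:average image of rholog} in the case of Theorem~\ref{T:main}---bounds the upper density of $\calF_g \setminus \calF_g^{\good}$ by $2 \cdot 2^{1-g} < 1$ relative to that $U$, respectively by $(12g+20)2^{-g}$ globally. Hence $\calF_g^{\good}$ has lower density at least the positive bound of Theorem~\ref{T:positive density} and at least $1 - (12g+20)2^{-g}$ as in Theorem~\ref{T:main}.

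The main obstacle I anticipate is purely a matter of bookkeeping in this last step: one has to unwind the proof of Proposition~\ref{P:general density result} carefully enough to confirm that its exceptional subset of relative upper density at most $(1 + \#I)2^{1-g}$ genuinely contains $(\calF_g \intersect U) \setminus \calF_g^{\good}$, rather than an incomparable set. This falls out once the three defining conditions of $\calF_g^{\good}$ are matched against the conditions isolated in that proof (note that ``no non-Weierstrass torsion point'' implies ``no nontrivial odd-order torsion point'', which is the direction needed). Everything else---decidability from Theorem~\ref{T:algorithm}, correctness of the output from Proposition~\ref{P:rational points are odd}---is already available, so no new argument is required.
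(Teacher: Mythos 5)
Your proposal is correct and takes essentially the same approach as the paper. The paper treats Corollary~\ref{C:effective} as an immediate consequence of the two-bullet summary preceding Theorem~\ref{T:algorithm} (which records that $C \in \calF_g^{\good}$ forces $C(\Q)=\{\infty\}$ and that $\calF_g^{\good}$ has the stated lower densities) together with the decidability of $\calF_g^{\good}$ given by Theorem~\ref{T:algorithm}; you have simply unwound those claims back to Proposition~\ref{P:rational points are odd}, the definition of $\calF_g^{\good}$, and the bookkeeping in Proposition~\ref{P:general density result}, which is exactly where they come from.
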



\section*{Acknowledgements} 

The idea to combine Chabauty's method 
with results on the average $2$-Selmer group size
is due to Manjul Bhargava and Benedict Gross,
whom we thank for discussions.
In particular, we thank Gross for lingering after giving a lecture
to explain the details of their argument to us; 
it was only after this 
that we had the idea that an equidistribution result (also proved 
by Bhargava and Gross)
could be used to refine a $2$-adic Chabauty argument 
enough to reduce the upper bound on $\#C(\Q)$ to $1$.
We thank Alice Guionnet and Scott Sheffield for discussions
regarding Lemma~\ref{L:E X_n},
and Jennifer Park for a discussion 
regarding Lemma~\ref{L:rholog of size 1}.
The first author thanks the Centre Interfacultaire Bernoulli at EPFL
for its hospitality during the period when most of the research
for this paper was done; 
his research was partially supported by the Guggenheim Foundation
and National Science Foundation grant DMS-1069236.
The second author thanks the German Science Foundation for supporting
his research through a grant within the framework of the DFG Priority Programme~1489
\emph{Algorithmic and Experimental Methods in Algebra, Geometry and Number Theory}.
He thanks ICERM at Brown University
for its hospitality during part of the semester program on
\emph{Complex and arithmetic dynamics}.

We also thank the referees for useful comments.

\begin{bibdiv}
\begin{biblist}

\bib{ACampo1979}{article}{
  author={A'Campo, Norbert},
  title={Tresses, monodromie et le groupe symplectique},
  language={French},
  journal={Comment. Math. Helv.},
  volume={54},
  date={1979},
  number={2},
  pages={318--327},
  issn={0010-2571},
  review={\MR {535062 (80m:14006)}},
  doi={10.1007/BF02566275},
}

\bib{Bhargava-Gross-preprint}{misc}{
  author={Bhargava, Manjul},
  author={Gross, Benedict},
  title={The average size of the $2$-Selmer group of Jacobians of hyperelliptic curves having a rational Weierstrass point},
  date={2013-04-29},
  note={Preprint, \texttt {arXiv:1208.1007v2}},
}

\bib{Bhargava-most-preprint}{misc}{
  author={Bhargava, Manjul},
  title={Most hyperelliptic curves over $\Q $ have no rational points},
  date={2013-08-05},
  note={Preprint, \texttt {arXiv:1308.0395v1}},
}

\bib{Bhargava-Shankar-preprint1}{misc}{
  author={Bhargava, Manjul},
  author={Shankar, Arul},
  title={Binary quartic forms having bounded invariants, and the boundedness of the average rank of elliptic curves},
  date={2013-12-23},
  note={Preprint, \texttt {arXiv:1006.1002v3}, to appear in {\em Annals of Math}},
}

\bib{Bosch-Lutkebohmert-Raynaud1990}{book}{
  author={Bosch, Siegfried},
  author={L{\"u}tkebohmert, Werner},
  author={Raynaud, Michel},
  title={N\'eron models},
  series={Ergebnisse der Mathematik und ihrer Grenzgebiete (3) [Results in Mathematics and Related Areas (3)]},
  volume={21},
  publisher={Springer-Verlag},
  place={Berlin},
  date={1990},
  pages={x+325},
  isbn={3-540-50587-3},
  review={\MR {1045822 (91i:14034)}},
}

\bib{Chabauty1941}{article}{
  author={Chabauty, Claude},
  title={Sur les points rationnels des courbes alg\'ebriques de genre sup\'erieur \`a l'unit\'e},
  language={French},
  journal={C. R. Acad. Sci. Paris},
  volume={212},
  date={1941},
  pages={882\ndash 885},
  review={\MR {0004484 (3,14d)}},
}

\bib{Chiodo-Eisenbud-Farkas-Schreyer2013}{article}{
  author={Chiodo, Alessandro},
  author={Eisenbud, David},
  author={Farkas, Gavril},
  author={Schreyer, Frank-Olaf},
  title={Syzygies of torsion bundles and the geometry of the level $\ell $ modular variety over $\overline {\mathscr {M}}_g$},
  journal={Invent. Math.},
  volume={194},
  date={2013},
  number={1},
  pages={73--118},
  issn={0020-9910},
  review={\MR {3103256}},
  doi={10.1007/s00222-012-0441-0},
  note={Theorem~2.3 is misstated in the journal article, but a correct statement appears in the version at \url {http://arxiv.org/abs/1205.0661v4}\phantom {i}},
}

\bib{Coleman1985chabauty}{article}{
  author={Coleman, Robert F.},
  title={Effective Chabauty},
  journal={Duke Math. J.},
  volume={52},
  date={1985},
  number={3},
  pages={765\ndash 770},
  issn={0012-7094},
  review={\MR {808103 (87f:11043)}},
}

\bib{DeJong2002}{article}{
  author={de Jong, A. J.},
  title={Counting elliptic surfaces over finite fields},
  note={Dedicated to Yuri I. Manin on the occasion of his 65th birthday},
  journal={Mosc. Math. J.},
  volume={2},
  date={2002},
  number={2},
  pages={281--311},
  issn={1609-3321},
  review={\MR {1944508 (2003m:11080)}},
}

\bib{Donagi1980}{article}{
  author={Donagi, Ron},
  title={Group law on the intersection of two quadrics},
  journal={Ann. Scuola Norm. Sup. Pisa Cl. Sci. (4)},
  volume={7},
  date={1980},
  number={2},
  pages={217--239},
  review={\MR {581142 (82b:14025)}},
}

\bib{Faltings1983}{article}{
  author={Faltings, G.},
  title={Endlichkeitss\"atze f\"ur abelsche Variet\"aten \"uber Zahlk\"orpern},
  language={German},
  journal={Invent. Math.},
  volume={73},
  date={1983},
  number={3},
  pages={349\ndash 366},
  issn={0020-9910},
  review={\MR {718935 (85g:11026a)}},
  translation={ title={Finiteness theorems for abelian varieties over number fields}, booktitle={Arithmetic geometry (Storrs, Conn., 1984)}, pages={9\ndash 27}, translator = {Edward Shipz}, publisher={Springer}, place={New York}, date={1986}, note={Erratum in: Invent.\ Math.\ \textbf {75} (1984), 381}, },
}

\bib{Fouvry1993}{article}{
  author={Fouvry, {\'E}.},
  title={Sur le comportement en moyenne du rang des courbes $y^2=x^3+k$},
  language={French},
  conference={ title={S\'eminaire de Th\'eorie des Nombres, Paris, 1990--91}, },
  book={ series={Progr. Math.}, volume={108}, publisher={Birkh\"auser Boston}, place={Boston, MA}, },
  date={1993},
  pages={61--84},
  review={\MR {1263524 (95b:11057)}},
}

\bib{Granville2007}{article}{
  author={Granville, Andrew},
  title={Rational and integral points on quadratic twists of a given hyperelliptic curve},
  journal={Int. Math. Res. Not. IMRN},
  date={2007},
  number={8},
  pages={Art. ID 027, 24},
  issn={1073-7928},
  review={\MR {2340106 (2008j:11070)}},
  doi={10.1093/imrn/rnm027},
}

\bib{Katz-Zureick-Brown-preprint}{misc}{
  author={Katz, Eric},
  author={Zureick-Brown, David},
  title={The Chabauty--Coleman bound at a prime of bad reduction and Clifford bounds for geometric rank functions},
  date={2013-01-25},
  note={Preprint, \texttt {arXiv:arXiv:1204.3335v3}},
}

\bib{Kneser1965c}{article}{
  author={Kneser, Martin},
  title={Starke Approximation in algebraischen Gruppen. I},
  language={German},
  journal={J. Reine Angew. Math.},
  volume={218},
  date={1965},
  pages={190--203},
  issn={0075-4102},
  review={\MR {0184945 (32 \#2416)}},
}

\bib{Koblitz1984}{book}{
  author={Koblitz, Neal},
  title={$p$-adic numbers, $p$-adic analysis, and zeta-functions},
  series={Graduate Texts in Mathematics},
  volume={58},
  edition={2},
  publisher={Springer-Verlag},
  place={New York},
  date={1984},
  pages={xii+150},
  isbn={0-387-96017-1},
  review={\MR {754003 (86c:11086)}},
}

\bib{Lichtenbaum1968}{article}{
  author={Lichtenbaum, Stephen},
  title={Curves over discrete valuation rings},
  journal={Amer. J. Math.},
  volume={90},
  date={1968},
  pages={380--405},
  issn={0002-9327},
  review={\MR {0230724 (37 \#6284)}},
}

\bib{Liu1994}{article}{
  author={Liu, Qing},
  title={Conducteur et discriminant minimal de courbes de genre $2$},
  language={French},
  journal={Compositio Math.},
  volume={94},
  date={1994},
  number={1},
  pages={51--79},
  issn={0010-437X},
  review={\MR {1302311 (96b:14038)}},
}

\bib{Liu1996}{article}{
  author={Liu, Qing},
  title={Mod\`eles entiers des courbes hyperelliptiques sur un corps de valuation discr\`ete},
  language={French, with English summary},
  journal={Trans. Amer. Math. Soc.},
  volume={348},
  date={1996},
  number={11},
  pages={4577--4610},
  issn={0002-9947},
  review={\MR {1363944 (97h:11062)}},
  doi={10.1090/S0002-9947-96-01684-4},
}

\bib{Lorenzini-Tucker2002}{article}{
  author={Lorenzini, Dino},
  author={Tucker, Thomas J.},
  title={Thue equations and the method of Chabauty-Coleman},
  journal={Invent. Math.},
  volume={148},
  date={2002},
  number={1},
  pages={47\ndash 77},
  issn={0020-9910},
  review={\MR {1892843 (2003d:11088)}},
}

\bib{McCallum1994}{article}{
  author={McCallum, William G.},
  title={On the method of Coleman and Chabauty},
  journal={Math. Ann.},
  volume={299},
  date={1994},
  number={3},
  pages={565\ndash 596},
  issn={0025-5831},
  review={\MR {1282232 (95c:11079)}},
}

\bib{McCallum-Poonen2012}{article}{
  author={McCallum, William},
  author={Poonen, Bjorn},
  title={The method of Chabauty and Coleman},
  book={ title={Explicit Methods in Number Theory}, subtitle={Rational Points and Diophantine Equations}, series={Panoramas et Synth\`eses}, volume={36}, publisher={Soci\'et\'e Math\'ematique de France}, place={Paris}, isbn={978-2-85629-359-1} },
  pages={99--117},
  date={2012},
}

\bib{Poonen2001-torsion}{article}{
  author={Poonen, Bjorn},
  title={Computing torsion points on curves},
  journal={Experiment. Math.},
  volume={10},
  date={2001},
  number={3},
  pages={449--465},
  issn={1058-6458},
  review={\MR {1917430 (2003k:11104)}},
}

\bib{Poonen2006-heuristic}{article}{
  author={Poonen, Bjorn},
  title={Heuristics for the Brauer-Manin obstruction for curves},
  journal={Experiment. Math.},
  volume={15},
  date={2006},
  number={4},
  pages={415--420},
  issn={1058-6458},
  review={\MR {2293593 (2008d:11062)}},
}

\bib{Poonen2013-bourbaki}{article}{
  author={Poonen, Bjorn},
  title={Average rank of elliptic curves [after Manjul Bhargava and Arul Shankar]},
  note={S\'eminaire Bourbaki. Vol. 2011/2012. Expos\'es 1043--1058},
  journal={Ast\'erisque},
  number={352},
  date={2013},
  pages={Exp. No. 1049, viii, 187--204},
  issn={0303-1179},
  isbn={978-2-85629-371-3},
  review={\MR {3087347}},
}

\bib{Poonen-Rains2012-selmer}{article}{
  author={Poonen, Bjorn},
  author={Rains, Eric},
  title={Random maximal isotropic subspaces and Selmer groups},
  journal={J. Amer. Math. Soc.},
  volume={25},
  date={2012},
  number={1},
  pages={245--269},
  issn={0894-0347},
  review={\MR {2833483}},
  doi={10.1090/S0894-0347-2011-00710-8},
}

\bib{Poonen-Voloch2004}{article}{
  author={Poonen, Bjorn},
  author={Voloch, Jos{\'e} Felipe},
  title={Random Diophantine equations},
  booktitle={Arithmetic of higher-dimensional algebraic varieties (Palo Alto, CA, 2002)},
  series={Progr. Math.},
  volume={226},
  pages={175\ndash 184},
  note={With appendices by Jean-Louis Colliot-Th\'el\`ene and Nicholas M. Katz},
  publisher={Birkh\"auser Boston},
  place={Boston, MA},
  date={2004},
  review={\MR {2029869}},
}

\bib{Reid-thesis}{book}{
  author={Reid, Miles},
  title={The complete intersection of two or more quadrics},
  date={1972},
  note={Ph.D.\ thesis, Trinity College, Cambridge},
}

\bib{Saito1988}{article}{
  author={Saito, Takeshi},
  title={Conductor, discriminant, and the Noether formula of arithmetic surfaces},
  journal={Duke Math. J.},
  volume={57},
  date={1988},
  number={1},
  pages={151--173},
  issn={0012-7094},
  review={\MR {952229 (89f:14024)}},
  doi={10.1215/S0012-7094-88-05706-7},
}

\bib{Shankar-Wang-preprint}{misc}{
  author={Shankar, Arul},
  author={Wang, Xiaoheng},
  title={Average size of the 2-Selmer group of Jacobians of monic even hyperelliptic curves},
  date={2014-02-17},
  note={Preprint, \texttt {arXiv:1307.3531v2}},
}

\bib{Skolem1934}{article}{
  author={Skolem, Th.},
  title={Ein Verfahren zur Behandlung gewisser exponentialer Gleichungen und diophantischer Gleichungen},
  language={German},
  booktitle={8. Skand. Mat.-Kongr., Stockholm},
  date={1934},
  pages={163--188},
}

\bib{Stoll2001}{article}{
  author={Stoll, Michael},
  title={Implementing 2-descent for Jacobians of hyperelliptic curves},
  journal={Acta Arith.},
  volume={98},
  date={2001},
  number={3},
  pages={245--277},
  issn={0065-1036},
  review={\MR {1829626 (2002b:11089)}},
  doi={10.4064/aa98-3-4},
}

\bib{Stoll2006-chabauty}{article}{
  author={Stoll, Michael},
  title={Independence of rational points on twists of a given curve},
  journal={Compos. Math.},
  volume={142},
  date={2006},
  number={5},
  pages={1201--1214},
  issn={0010-437X},
  review={\MR {2264661}},
}

\bib{Stoll2009-preprint}{misc}{
  author={Stoll, Michael},
  title={On the average number of rational points on curves of genus $2$},
  date={2009-02-24},
  note={Preprint, \texttt {arXiv:0902.4165}},
}

\bib{WangXJ-thesis}{book}{
  author={Wang, Xiaoheng Jerry},
  title={Pencils of quadrics and Jacobians of hyperelliptic curves},
  date={2012-10-29},
  note={Ph.D.\ thesis, Harvard University},
  pages={iii+101},
}

\end{biblist}
\end{bibdiv}

\end{document}